\theoremstyle{definition} 
\newtheorem{thm}{Theorem}[section] 
\newtheorem{lemma}[thm]{Lemma}
\newtheorem{prop}[thm]{Proposition}
\newtheorem{cor}[thm]{Corollary}
\newtheorem{defn}[thm]{Definition}
\newtheorem{rmk}[thm]{Remark}
\newtheorem{note}[thm]{Notation}
\newcommand{\ds}{\displaystyle}
\newcommand{\colim@}[2]{%
  \vtop{\m@th\ialign{##\cr
    \hfil$#1\operator@font colim$\hfil\cr
    \noalign{\nointerlineskip\kern1.5\ex@}#2\cr
    \noalign{\nointerlineskip\kern-\ex@}\cr}}%
}
\newcommand{\colim}{%
  \mathop{\mathpalette\colim@{\rightarrowfill@\textstyle}}\nmlimits@
}
\newcommand{\Coeq}{%
  \mathop{\operatorfont Coeq}}
\newcommand{\Eq}{%
  \mathop{\operatorfont Eq}}
\newcommand{\Hom}{
  \mathop{\operatorfont Hom}}
\DeclareMathOperator{\hoco}{hocolim}
\newcommand{\hocolim}[1]{{\ds\hoco}_{#1}}
\DeclareMathOperator{\srep}{srep}
\newcommand{\oldthm}[3]{\vspace{\topsep} 
\noindent \textbf{#1 \ref{#2}.} #3

\vspace{\topsep}}
\begin{document}
\title{The Canonical Grothendieck Topology and a Homotopical Analog}
\author{C. Lester}
\date{\today}
\maketitle

\begin{abstract}
We explore the canonical Grothendieck topology and a new homotopical analog.
First we discuss some background information, including defining a new 2-category called the \textit{Index-Functor Category} and a sieve generalization.
Then we discuss a specific description of the covers in the canonical topology and a homotopical analog.
Lastly, we explore the covers in the homotopical analog by obtaining some examples. 
\end{abstract}

\tableofcontents

\section{Introduction}

Let $\EuScript{M}$ be a simplicial model category.
We prove that there is a Grothendieck topology on $\EuScript{M}$ that captures information about certain kinds of homotopy colimits.
In the case of topological spaces, the covers in the Grothendieck topology include the open covers of the space and the set of simplicies mapping into the space.
There are times in the homotopy theory of topological spaces where these two covers can be used similary; this new Grothendieck topology provides an overarching structure where both these types of covers appear naturally.
\bigskip

Sieves will be of particular importance in this paper and so we start with a reminder of their definition and a reminder of the definition of a Grothedieck topology (in terms of sieves); both definitions follow the notation and terminology used by Mac Lane and Moerdijk in \cite{maclane}.

For any object $X$ of a category $\EuScript{C}$, we call $S$ a \textit{sieve on $X$} if $S$ is a collection of morphisms, all of whose codomains are $X$, that is closed under precomposition, i.e. if $f\in S$  and $f\circ g$ makes sense, then $f\circ g\in S$.
In particular, we can view a sieve $S$ on $X$ as a full subcategory of the overcategory $(\EuScript{C}\downarrow X)$.

A \textit{Grothendieck topology} is a function that assigns to each object $X$ a collection $J(X)$ of sieves such that
\begin{enumerate}
\item (Maximality) $\{ f \, | \, \text{codomain } f = X \} = (\EuScript{C}\downarrow X) \in J(X)$
\item (Stability) If $S\in J(X)$ and $f\colon Y\to X$ is a morphism in $\EuScript{C}$, then \\ $f^\ast S\coloneqq \{g\, | \, \text{codomain }g = Y,\, f\circ g\in S \} \in J(Y)$
\item (Transitivity) If $S\in J(X)$ and $R$ is any sieve on $X$ such that \\ $f^\ast R \in J(\text{domain } f)$ for all $f\in S$, then $R\in J(X)$.
\end{enumerate}

\bigskip

In SGA 4.2.2 Verdier introduced the canonical Grothendieck topology.
He defined the {\it canonical topology} on a category $\EuScript{C}$ to be the largest Grothendieck topology where all representable presheaves are sheaves. 
With such an implicit definition we naturally start to wonder how one can tell what collection of maps are or are not in the canonical topology.
In order to obtain a more explicit description of the canonical topology we define a notion of \textit{universal colim sieve}:

\oldthm{Definition}{def of cs and ucs}{For a category $\EuScript{C}$, an object $X$ of $\EuScript{C}$ and sieve $S$ on $X$, we call $S$ a {\it colim sieve} if $\colim_{S}{U}$ exists and the canonical map $\colim_{S}{U}\to X$ is an isomorphism.
(Alternatively, $S$ is a colim sieve if $X$ is the universal cocone under the diagram $U\colon S\to \EuScript{C}$.) 
Moreover, we call $S$ a {\it universal colim sieve} if for all arrows $\alpha\colon Y\to X$ in $\EuScript{C}$, $\alpha^\ast S$ is a colim sieve on $Y$.}

\noindent Then we prove that the collection of all univeral colim sieves forms a Grothendieck topology, which is precisely the canonical topology: 

\oldthm{Theorem}{ucs is a top}{Let $\EuScript{C}$ be any category. The collection of all universal colim sieves on $\EuScript{C}$ forms a Grothendieck topology.}

\vspace{-\topsep}
\oldthm{Theorem}{can top is ucs}{For any (locally small) category $\EuScript{C}$, the collection of all universal colim sieves on $\EuScript{C}$ is the canonical topology.}

\noindent Moreover, for `nice' catgories, we find a basis for the canonical topology:

\oldthm{Theorem}{hinting at basis for can top in special case}{Let $\EuScript{C}$ be a cocomplete category with pullbacks whose coproducts and pullbacks commute.
A sieve $S$ on $X$ is a (universal) colim sieve of $\EuScript{C}$ if and only if there exists some $\{A_\alpha\to X\}_{\alpha\in\EuScript{A}} \subset S$ where $\coprod_{\alpha\in\EuScript{A}} A_\alpha \to X$ is a (universal) effective epimorphism.}


Theorems \ref{ucs is a top} and \ref{can top is ucs} are folklore, and can be found in \cite{johnstone2002sketches}. We give new proofs using a technique that also works for the homotopical analog that is our main result.

Adapting the above notions to the homotopical setting, we are led to the following:

\oldthm{Definition}{def of hs and uhs}{For a model category $\EuScript{M}$, an object $X$ of $\EuScript{M}$ and sieve $S$ on $X$, we call $S$ a {\it hocolim sieve} if the canonical map $\hocolim{S}{U}\to X$ is a weak equivalence.
Moreover, we call $S$ a {\it universal hocolim sieve} if for all arrows $\alpha\colon Y\to X$ in $\EuScript{C}$, $\alpha^\ast S$ is a hocolim sieve.}

\vspace{-\topsep}
\oldthm{Theorem}{uhcs is a top}{For a simplicial model category $\EuScript{M}$, the collection of all universal hocolim sieves on $\EuScript{M}$ forms a Grothendieck topology, which we dub the \textit{homotopical canonical topology}.}

\noindent This homotopical analog of the canonical topology has one particular feature: it `contains' as examples both the open covers of a space and the set of simplicies mapping into the space, i.e.

\oldthm{Proposition}{open covers are uhcs}{For any topological space $X$ and open cover $\EuScript{U}$, the sieve generated by $\EuScript{U}$ is in the homotopical canonical topology.}

\vspace{-\topsep}
\oldthm{Corollary}{simplices are uhcs}{For any topological space $X$, the sieve generated by the set $\{ \Delta^n \to X\ |\ n\in \mathbb{Z}_{\geq 0}\}$ is in the homotopical canonical topology.}

\noindent 
There are times in the homotopy theory of topological spaces when the set of simplices mapping into a space and the open covers of a space act similarly; for example, we can compute cohomology with both (singular and \v{C}ech respectively, which are isomorphic when the space is `nice'), and both contexts support detection theorems for quasi-fibrations. 
The homotopical canonical topology provides an overarching structure where both these types of covers appear naturally.

\bigskip

\noindent\textit{Organization.}

We start by laying the groundwork:
In Section \ref{section background} we spend some time exploring preliminary results and definitions, which includes a discussion on effective epimorphisms.
In Section \ref{section IF cat stuff} we define a new 2-category of diagrams in $\EuScript{C}$; this will allow us to ``work with colimits'' without knowing which colimits exist.
Then we do some exploration of this category's $Hom$-sets and 2-morphisms.
Lastly, in Section \ref{section gen sieves} we define a generalization of a sieve, i.e.\ a special subcategory of the overcategory, and get a few results pertaining to this generalization.

We use these background results (2-categories, generalizations, etc.) in Section \ref{UCS and HCS form a GTop} to prove that the collection of universal colim sieves forms a Grothendieck topology.
Additionally in Section \ref{UCS and HCS form a GTop}, we prove that the collection of universal hocolim sieves forms a Grothendieck topology.
The similarities between these proofs are highlighted.

Lastly, in Sections \ref{section ucs and can top} and \ref{hcs in top} we explore some of the implications of Section \ref{UCS and HCS form a GTop}.
Specifically, in Section \ref{section ucs and can top} we prove that the canonical topology can be described using universal colim sieves and 
get a basis for the canonical topology on `nice' categories. 
And in Section \ref{hcs in top}, we find some examples of universal hocolim sieves on the category of topological spaces.
\bigskip

\noindent\textit{General Notation.}

\begin{note}\label{forgetful functor}
For any subcategory $S$ of $(\EuScript{C}\downarrow X)$, we will use $U$ to represent the forgetful functor $S\to\EuScript{C}$. For example, for a sieve $S$ on $X$, $U(f)=\text{domain}\ f$.
\end{note}

\begin{note}\label{note hom}
For any category $\EuScript{D}$ and any two objects $P,M$ of $\EuScript{D}$, we will write $\EuScript{D}(P,M)$ for $\text{Hom}_{\EuScript{D}}(P,M)$.
\end{note}

\begin{note}\label{generators}
We say that a sieve $S$ on $X$ is \textit{generated} by the morphisms $ \{f_\alpha\colon A_\alpha\to X\}_{\alpha\in\mathcal{A}}$ and write $S = \langle \{f_\alpha\colon A_\alpha\to X\}_{\alpha\in\mathcal{A}}\rangle$ if each $f\in S$ factors through one of the $f_\alpha$, i.e. if $f\in S$ then there exists an $\alpha\in\mathcal{A}$ and morphism $g$ such that $f = f_\alpha\circ g$.
\end{note}
\bigskip

\noindent\textit{Acknowledgements.}

This work is part of the author's doctoral dissertation at the University of Oregon.
The author is extremely grateful to their advisor, Dan Dugger, for all of his guidance, wisdom and patience.

\section{Preliminary Information}\label{section background}

This section contains the preliminaries for the rest of the document, starting with the following important definitions:

\begin{defn}\label{def of cs and ucs}
For a category $\EuScript{C}$, an object $X$ of $\EuScript{C}$ and sieve $S$ on $X$, we call $S$ a {\it colim sieve} if $\colim_{S}{U}$ exists and the canonical map $\colim_{S}{U}\to X$ is an isomorphism.
(Alternatively, $S$ is a colim sieve if $X$ is the universal cocone under the diagram $U\colon S\to \EuScript{C}$.) 
Moreover, we call $S$ a {\it universal colim sieve} if for all arrows $\alpha\colon Y\to X$ in $\EuScript{C}$, $\alpha^\ast S$ is a colim sieve on $Y$.
\end{defn}

\begin{rmk}
In \cite{johnstone2002sketches} Johnstone also defined sieves of this form but the term `effectively-epimorphic' was used instead of the term `colim sieve.'
\end{rmk}

\begin{defn}\label{def of hs and uhs}
For a model category $\EuScript{M}$, an object $X$ of $\EuScript{M}$ and sieve $S$ on $X$, we call $S$ a {\it hocolim sieve} if the canonical map $\hocolim{S}{U}\to X$ is a weak equivalence.
Moreover, we call $S$ a {\it universal hocolim sieve} if for all arrows $\alpha\colon Y\to X$ in $\EuScript{C}$, $\alpha^\ast S$ is a hocolim sieve.
\end{defn}


\subsection{Basic Results}\label{section ucs def and basic reference}



This section mentions some basic results, all of which
we believe 
are well-known folklore but we include them here for completeness.

\begin{lemma}\label{pb sieve gen set}
Suppose $\EuScript{C}$ is a category with all pullbacks. \\ Let $S = \langle \{g_\alpha\colon A_\alpha\to X \}_{\alpha\in\mathfrak{A}} \rangle$ be a sieve on object $X$ of $\EuScript{C}$ and $f\colon Y\to X$ be a morphism in $\EuScript{C}$. Then $f^\ast S = \langle \{A_\alpha\times_X Y \overset{\pi_2}{\longrightarrow} Y\}_{\alpha\in\mathfrak{A}} \rangle$.
\end{lemma}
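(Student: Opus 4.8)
The plan is to prove the two sieve inclusions separately. Throughout, for each $\alpha$ let $\pi_1^\alpha\colon A_\alpha\times_X Y\to A_\alpha$ and $\pi_2^\alpha\colon A_\alpha\times_X Y\to Y$ denote the two projections, so that the defining pullback square commutes, i.e.\ $f\circ\pi_2^\alpha = g_\alpha\circ\pi_1^\alpha$. I will write $\langle\{\pi_2^\alpha\}\rangle$ for the sieve generated by the family $\{A_\alpha\times_X Y\xrightarrow{\pi_2^\alpha} Y\}_{\alpha\in\mathfrak{A}}$.

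For the inclusion $\langle\{\pi_2^\alpha\}\rangle \subseteq f^\ast S$, I would first check that each generator lies in $f^\ast S$. Since $f\circ\pi_2^\alpha = g_\alpha\circ\pi_1^\alpha$ and $g_\alpha\in S$, the precomposition-closure of $S$ gives $f\circ\pi_2^\alpha\in S$, which is exactly the assertion $\pi_2^\alpha\in f^\ast S$. Because $f^\ast S$ is itself a sieve on $Y$ (hence closed under precomposition) and contains every $\pi_2^\alpha$, it must contain the whole sieve these morphisms generate.

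The reverse inclusion $f^\ast S\subseteq \langle\{\pi_2^\alpha\}\rangle$ is where the pullback hypothesis does the real work. Take any $h\colon Z\to Y$ in $f^\ast S$; by definition $f\circ h\in S$, and since $S=\langle\{g_\alpha\}\rangle$ there is an index $\alpha$ and a morphism $g\colon Z\to A_\alpha$ with $f\circ h = g_\alpha\circ g$. This says precisely that the outer square formed by $h$ and $g$ commutes over $X$, so the universal property of the pullback $A_\alpha\times_X Y$ supplies a (unique) morphism $u\colon Z\to A_\alpha\times_X Y$ with $\pi_1^\alpha\circ u = g$ and $\pi_2^\alpha\circ u = h$. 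The last equation exhibits $h$ as a factorization through the generator $\pi_2^\alpha$, so $h\in\langle\{\pi_2^\alpha\}\rangle$, completing this inclusion.

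I do not expect any serious obstacle: the argument is a direct unwinding of the definitions of $f^\ast S$, of a generated sieve, and of the pullback universal property. The only points demanding care are bookkeeping the index $\alpha$ — the ``second projection'' is genuinely a different map $\pi_2^\alpha$ for each generator, and the reverse inclusion must produce the correct one — together with remembering to use that $f^\ast S$ is a sieve, so that containing the generators is enough to contain all of $\langle\{\pi_2^\alpha\}\rangle$.
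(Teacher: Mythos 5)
Your proof is correct and is exactly the standard argument the paper has in mind when it dismisses this lemma as ``an easy exercise'': one inclusion from precomposition-closure of $f^\ast S$ applied to $f\circ\pi_2^\alpha = g_\alpha\circ\pi_1^\alpha$, and the other from the universal property of the pullback applied to a factorization $f\circ h = g_\alpha\circ g$. Both directions are handled cleanly, including the indexing of the projections, so nothing is missing.
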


\begin{proof} It is an easy exercise.
\end{proof}

\begin{prop}\label{colim is coeq}
Let $\EuScript{C}$ be a cocomplete category. 
For a sieve in $\EuScript{C}$ on $X$ of the form $S = \langle \{f_\alpha\colon A_\alpha\to X \}_{\alpha\in\mathfrak{A}} \rangle$ such that $A_i\times_X A_j$ exists for all $i,j\in\mathfrak{A}$,
$$\colim_{S}{U} \cong
\Coeq \left(
\begin{tikzcd}\displaystyle
\coprod_{(i,j)\in\mathfrak{A}\times\mathfrak{A}} A_i\times_X A_j \arrow[d, shift right=2] \arrow[d, shift left=2] \\
\displaystyle \coprod_{k\in\mathfrak{A}} A_k
\end{tikzcd}\right)
$$
where the left and right vertical maps are induced from the projection morphisms $\pi_1\colon A_i\times_X A_j \to A_i$ and $\pi_2\colon A_i\times_X A_j \to A_j$.
\end{prop}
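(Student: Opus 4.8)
The plan is to verify that the coequalizer $C$ on the right-hand side satisfies the universal property of $\colim_{S} U$, and then conclude by Yoneda. Concretely, for an arbitrary object $Z$ of $\EuScript{C}$ I will exhibit a bijection, natural in $Z$, between cocones under $U\colon S\to\EuScript{C}$ with vertex $Z$ and morphisms $C\to Z$. Unwinding the universal property of the coequalizer, a morphism $C\to Z$ is the same datum as a family $\{a_\alpha\colon A_\alpha\to Z\}_{\alpha\in\mathfrak{A}}$ satisfying $a_i\circ\pi_1 = a_j\circ\pi_2$ on $A_i\times_X A_j$ for every $i,j\in\mathfrak{A}$. Meanwhile, a cocone under $U$ with vertex $Z$ is a family $\{\lambda_f\colon \operatorname{dom} f\to Z\}_{f\in S}$ with $\lambda_f\circ g = \lambda_{f'}$ whenever $g$ defines a morphism $f'\to f$ in $S$ (i.e.\ $f\circ g = f'$).

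In one direction, I would restrict a cocone $\{\lambda_f\}$ to the generators, setting $a_\alpha := \lambda_{f_\alpha}$. Observe that $A_i\times_X A_j$, equipped with its canonical map to $X$, lies in $S$ (it factors through $f_i$), and that the projections $\pi_1\colon A_i\times_X A_j\to A_i$ and $\pi_2\colon A_i\times_X A_j\to A_j$ are morphisms in $S$ over this object. The cocone condition applied to $\pi_1$ and $\pi_2$ then yields $a_i\circ\pi_1 = a_j\circ\pi_2$, which is exactly the coequalizer relation. In the other direction, given generator data $\{a_\alpha\}$, for each $f\in S$ I choose a factorization $f = f_\alpha\circ g$ (which exists since $S = \langle\{f_\alpha\}\rangle$) and set $\lambda_f := a_\alpha\circ g$. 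That this assignment is a cocone is then a one-line check: if $f\circ g' = f'$ and $f = f_\alpha\circ g$, then $f' = f_\alpha\circ(g\circ g')$, whence $\lambda_{f'} = a_\alpha\circ g\circ g' = \lambda_f\circ g'$.

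The crux of the argument — and the step I expect to be the main obstacle — is showing that $\lambda_f$ is well-defined, i.e.\ independent of the chosen factorization. Suppose $f = f_i\circ g = f_j\circ h$. Since $f_i\circ g = f_j\circ h$, the pair $(g,h)$ factors through a unique $\phi\colon \operatorname{dom} f\to A_i\times_X A_j$ with $\pi_1\circ\phi = g$ and $\pi_2\circ\phi = h$. Then $a_i\circ g = a_i\circ\pi_1\circ\phi = a_j\circ\pi_2\circ\phi = a_j\circ h$, where the middle equality is precisely the coequalizer relation. This is exactly where the hypothesis that the pullbacks $A_i\times_X A_j$ exist is used, and it is the only nontrivial compatibility to verify. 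Granting it, the two constructions are visibly mutually inverse and natural in $Z$, so the coequalizer corepresents the same functor as $\colim_{S} U$; in particular the colimit exists (without separately invoking cocompleteness for $S$) and is isomorphic to the displayed coequalizer, the canonical map $\colim_{S} U\to X$ corresponding under this identification to the family $\{f_\alpha\}$ itself.
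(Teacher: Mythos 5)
Your proof is correct, and it takes a different route from the paper's. The paper constructs a small category $I$ with objects $\alpha$ and $(\alpha,\beta)$ and the two evident arrows $(\alpha,\beta)\to\alpha$, $(\alpha,\beta)\to\beta$, defines $L\colon I\to S$ sending $\alpha\mapsto f_\alpha$ and $(\alpha,\beta)\mapsto f_{\alpha,\beta}$, checks that $L$ is final (each undercategory $(f\downarrow L)$ is connected), and then invokes Mac Lane's finality theorem to get $\colim_S U\cong\colim_I UL$, the latter being the displayed coequalizer by inspection. You instead verify the universal property directly: you show that maps out of the coequalizer and cocones under $U$ corepresent the same functor, with the only nontrivial point being well-definedness of the extension from generators to all of $S$. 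The mathematical heart is the same in both arguments --- two factorizations $f=f_i\circ g=f_j\circ h$ are reconciled through the pullback $A_i\times_X A_j$, which is exactly what makes $(f\downarrow L)$ connected in the paper's version --- but you prove the comparison by hand where the paper delegates it to the cofinality theorem. Your version is more elementary and self-contained (and, as you note, establishes existence of $\colim_S U$ as a byproduct rather than assuming it); the paper's is shorter and reuses a standard tool that it also needs elsewhere (e.g.\ in the maximality step of Theorem \ref{ucs is a top}). One small point worth making explicit in your write-up: each generator $f_\alpha$ does lie in $S$ (via the trivial factorization $f_\alpha=f_\alpha\circ\mathrm{id}$), so the restriction map from cocones to generator data is defined; you use this implicitly when setting $a_\alpha:=\lambda_{f_\alpha}$.
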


\begin{proof}
Let $I$ be the category with objects $\alpha$ and $(\alpha,\beta)$ for all $\alpha,\beta\in\mathfrak{A}$ and unique non-identity morphisms $(\alpha,\beta)\to\alpha$ and $(\alpha,\beta)\to\beta$.
Define a functor $L\colon I\to S$ by $L(\alpha) = f_\alpha$ and $L(\alpha,\beta) = f_{\alpha,\beta}$ where $f_{\alpha,\beta}\colon A_\alpha\times_X A_\beta \to X$ is the composition $f_\alpha\circ \pi_1 = f_\beta\circ \pi_2$. 
It is an easy exercise to see that $L$ is final in the sense that for all $f\in S$ the undercategory $(f\downarrow L)$ is connected.
Thus by \cite[][Theorem 1, Section 3, Chapter IX]{cwm}
$$\colim_{S}{U}\cong \colim_{I}{UL}.$$
But by the universal property of colimits, $\colim_{I}{UL}$ is precisely the coequalizer mentioned above.

\end{proof}

\begin{lemma}\label{cs nice with isom}
Let $\EuScript{C}$ be a category. Then $S$ is a colim sieve on $X$ if and only if $f^\ast S$ is a colim sieve for any isomorphism $f\colon Y\to X$.
\end{lemma}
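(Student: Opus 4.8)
The plan is to treat both directions at once by observing that, for an isomorphism $f\colon Y\to X$, postcomposition with $f$ identifies the sieve $f^\ast S$ with $S$ as diagrams, and then to track what this identification does to the two canonical maps. The backward direction is essentially free: since $\mathrm{id}_X\colon X\to X$ is an isomorphism and $\mathrm{id}_X^\ast S = S$, applying the hypothesis (that $f^\ast S$ is a colim sieve for every isomorphism $f$) to $f=\mathrm{id}_X$ gives immediately that $S$ itself is a colim sieve. So the real content is the forward direction, for which the same identification does all the work.

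First I would define $\Phi\colon f^\ast S\to S$ by postcomposition with $f$, i.e.\ $\Phi(g) = f\circ g$ on objects and $\Phi(h)=h$ on the underlying maps of morphisms. Since $f$ is an isomorphism, $\Phi$ is an isomorphism of categories, with inverse given by postcomposition with $f^{-1}$. Because taking domains is unaffected by postcomposing with $f$, the forgetful functors satisfy $U = U\circ\Phi$ as functors $f^\ast S\to\EuScript{C}$ (Notation \ref{forgetful functor}). Hence $\colim_{f^\ast S}{U}$ exists if and only if $\colim_{S}{U}$ does, and when they exist they are canonically the same object via $\Phi$.

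Next I would compare the two canonical cocones. The canonical map $\colim_{S}{U}\to X$ comes from the cocone whose component at $h\in S$ is $h$ itself, and likewise the canonical map $\colim_{f^\ast S}{U}\to Y$ comes from the cocone whose component at $g\in f^\ast S$ is $g$. Postcomposing the latter cocone with $f$ sends the component $g$ to $f\circ g=\Phi(g)$, which is exactly the $S$-cocone component at $\Phi(g)$. Under the identification coming from $\Phi$ this shows that the canonical map $\colim_{S}{U}\to X$ equals $f$ composed with the canonical map $\colim_{f^\ast S}{U}\to Y$. Since $f$ is an isomorphism, one of these canonical maps is an isomorphism if and only if the other is; that is, $S$ is a colim sieve on $X$ if and only if $f^\ast S$ is a colim sieve on $Y$, which gives the forward direction (and re-proves the backward one).

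The only thing requiring care---the main, if mild, obstacle---is the bookkeeping of the two canonical cocones, namely checking that postcomposition with $f$ really carries one to the other on the nose rather than merely up to a compatible isomorphism. Once $U=U\circ\Phi$ is in hand, this is a direct verification using the universal property of the colimit, so I do not expect any genuine difficulty beyond keeping the domains and codomains straight.
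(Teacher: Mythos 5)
Your proof is correct and complete: the identification $\Phi\colon f^\ast S\to S$ by postcomposition with $f$ is an isomorphism of categories satisfying $U=U\circ\Phi$, the canonical cocones match up as you describe, and taking $f=\mathrm{id}_X$ handles the converse. The paper simply declares this lemma ``an easy exercise,'' so there is no argument to compare against; your writeup supplies exactly the routine verification that was intended.
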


\begin{proof}
It is an easy exercise.
\end{proof}


Recall that a morphism $f\colon Y\to X$ is called an \textit{effective epimorphism} provided $Y\times_X Y$ exists, $f$ is an epimorphism and $c\colon \Coeq\left(Y\times_X Y\ \substack{\longrightarrow\\ \longrightarrow}\ Y\right)\to X$ is an isomorphism.
Note that this third condition actually implies the second because $f=c\circ g$ where $g\colon Y\to \Coeq\left(Y\times_X Y\ \substack{\longrightarrow\\ \longrightarrow}\ Y\right)$ is the canonical map. Indeed, $g$ is an epimorphism by an easy exercise and $c$ is an epimorphism since it is an isomorphism.

Additionally, $f\colon Y\to X$ is called a \textit{universal effective epimorphism} if $f$ is an effective epimorphism with the additional property that for every pullback diagram
\begin{center}
\begin{tikzcd}
W \arrow{r} \arrow{d}[left]{\pi_g} &
Y \arrow{d}[right]{f} \\
Z \arrow{r}[below]{g} &
X
\end{tikzcd}
\end{center}
$\pi_g$ is also an effective epimorphism.

\begin{rmk}
A morphism $f\colon A\to B$ is called a \textit{regular epimorphism} if it is a coequalizer of some pair of arrows.
When the pullback $A\times_B A$ of $f$ exists in the category $\EuScript{C}$, then it is easy to see that $f$ is a regular epimorphism if and only if $f$ is an effective epimorphism.
\end{rmk}

\begin{cor}\label{eff epi's gen colim sieves}
Let $\EuScript{C}$ be a cocomplete category with pullbacks.
If $$S = \langle \{f\colon Y\to X\}\rangle$$ is a sieve on $X$, then $S$ is a colim sieve if and only if $f$ is an effective epimorphism. Moreover, $S$ is a universal colim sieve if and only if $f$ is a universal effective epimorphism.
\end{cor}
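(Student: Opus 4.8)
The plan is to reduce both equivalences to the single-generator case of Proposition \ref{colim is coeq} together with the definitions recalled just above. For the first equivalence, I would apply Proposition \ref{colim is coeq} to $S = \langle \{f\colon Y\to X\}\rangle$. Here the index set $\mathfrak{A}$ is a singleton, so $\coprod_{(i,j)} A_i\times_X A_j = Y\times_X Y$ and $\coprod_k A_k = Y$; since $\EuScript{C}$ has pullbacks the hypothesis that $A_i\times_X A_j$ exists is automatic, and cocompleteness guarantees $\colim_{S}{U}$ exists. Proposition \ref{colim is coeq} then yields
$$\colim_{S}{U} \cong \Coeq\left(Y\times_X Y \rightrightarrows Y\right),$$
where the two arrows are the projections $\pi_1,\pi_2$. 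The key point to check is that this isomorphism is compatible with the maps to $X$, i.e.\ that it carries the canonical map $\colim_{S}{U}\to X$ onto the canonical map $c\colon \Coeq\left(Y\times_X Y \rightrightarrows Y\right)\to X$: both are the unique arrows induced by the cocone whose single leg is $f$, so they agree.

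Granting this compatibility, $S$ is a colim sieve precisely when $c$ is an isomorphism. By the definition of effective epimorphism recalled above — using the observation there that the coequalizer condition already forces $f$ to be an epimorphism, so that the epimorphism hypothesis is redundant — the conjunction of ``$Y\times_X Y$ exists'' (automatic here) and ``$c$ is an isomorphism'' is exactly the assertion that $f$ is an effective epimorphism. This settles the first equivalence.

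For the universal statement, I would show that the universal clauses on the two sides are literally the same pullback condition. Fix an arbitrary $\alpha\colon Z\to X$. By Lemma \ref{pb sieve gen set}, $\alpha^\ast S = \langle \{Y\times_X Z \overset{\pi_2}{\longrightarrow} Z\}\rangle$, again a single-generator sieve, to which the first equivalence applies on $Z$ (pullbacks and cocompleteness persist). Hence $\alpha^\ast S$ is a colim sieve if and only if $\pi_2$ is an effective epimorphism. Quantifying over all $\alpha$: $S$ is a universal colim sieve if and only if every such pullback projection is an effective epimorphism, which is precisely the defining condition for $f$ to be a universal effective epimorphism (taking $\alpha = \mathrm{id}_X$ recovers that $f$ itself is an effective epimorphism, so no separate clause is needed).

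The one place that requires genuine care is the compatibility assertion in the first paragraph — that the abstract isomorphism of Proposition \ref{colim is coeq} matches the canonical map to $X$ with $c$. Everything else is a direct translation between definitions. I expect this to follow quickly from the uniqueness clauses of the relevant universal properties (both maps restrict to $f$ along the structure map from $Y$), but it is worth spelling out to avoid a gap.
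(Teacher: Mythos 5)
Your proposal is correct and follows essentially the same route as the paper: the paper's proof is the one-line observation that Proposition \ref{colim is coeq} (in the singleton-generator case) identifies the colim-sieve condition with the effective-epimorphism condition, and the universal clause is handled exactly as you do, via Lemma \ref{pb sieve gen set}. You have simply spelled out the details (including the compatibility of the coequalizer isomorphism with the maps to $X$, and the redundancy of the epimorphism clause) that the paper leaves implicit.
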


\begin{proof}
The condition for $f$ to be an effective epimorphism is, by Proposition \ref{colim is coeq}, precisely what it means for $S$ to be a colim sieve.
\end{proof}

\subsection{Effective Epimorphisms}\label{section eff epi}

Now we take a detour away from (universal) colim sieves to discuss some results about effective epimorphisms, which will be used in the proof of Theorem \ref{basis for can top in special case}. We start with a terminology reminder \cite[see][]{kashiwara2006categories}:
we call $f\colon A \to B$ a \textit{strict epimorphism} if any morphism $g\colon A \to C$ with the property that $gx = gy$ whenever $fx = fy$ for all $D$ and $x,y\colon D\to A$, factors uniquely through $f$, i.e. $g = hf$ for some unique $h\colon B \to C$.

\begin{prop}\label{un eff epis are un strict epis}
If the category $\EuScript{C}$ has all pullbacks, then a morphism $f$ is an effective epimorphism if and only if $f$ is a strict epimorphism.
\end{prop}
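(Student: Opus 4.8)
The plan is to show that, under the pullback hypothesis, each of the two notions is equivalent to the single condition that $f\colon Y\to X$ exhibit $X$ as the coequalizer of its kernel pair. Since $\EuScript{C}$ has all pullbacks, the kernel pair $Y\times_X Y$ exists; I write $p_1,p_2\colon Y\times_X Y\to Y$ for its two projections, so that $f p_1 = f p_2$ by the definition of the pullback.

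The key step is to translate the elementwise hypothesis appearing in the definition of a strict epimorphism into a single equation involving the kernel pair. Concretely, I would show that for a morphism $g\colon Y\to C$, the property that $gx = gy$ whenever $fx = fy$ (for all $D$ and all $x,y\colon D\to Y$) is equivalent to the single equation $g p_1 = g p_2$. For the forward direction one takes $D = Y\times_X Y$, $x = p_1$ and $y = p_2$; since $f p_1 = f p_2$, the hypothesis immediately yields $g p_1 = g p_2$. For the converse, given any $x,y\colon D\to Y$ with $fx = fy$, the universal property of the pullback produces a unique $w\colon D\to Y\times_X Y$ with $p_1 w = x$ and $p_2 w = y$, whence $gx = g p_1 w = g p_2 w = gy$. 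This equivalence is the only place where the existence of pullbacks is used in an essential way, and it is the heart of the argument.

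With this translation in hand, the two definitions unwind to the same universal property. The strict epimorphism condition now reads: every $g\colon Y\to C$ with $g p_1 = g p_2$ factors uniquely through $f$. Together with the identity $f p_1 = f p_2$, this is precisely the statement that $(X,f)$ is a coequalizer of $p_1,p_2$. On the other side, to say that $f$ is an effective epimorphism is to say that $\Coeq\left(Y\times_X Y\ \substack{\longrightarrow\\ \longrightarrow}\ Y\right)$ exists and that the canonical comparison $c$ to $X$ (defined using $f p_1 = f p_2$) is an isomorphism; this holds exactly when $(X,f)$ is itself a coequalizer of $p_1,p_2$.

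Hence both conditions are equivalent to $(X,f)$ being the coequalizer of its kernel pair, and the proposition follows. I would remark that the epimorphism clause in the definition of effective epimorphism needs no separate attention, since any coequalizer map is automatically an epimorphism (a fact already observed in the excerpt for $c$ and the canonical map $g$). The main obstacle, such as it is, is not conceptual but bookkeeping: one must set up the kernel-pair translation carefully and keep straight that the comparison map $c$ is an isomorphism precisely when $f$ enjoys the coequalizer universal property.
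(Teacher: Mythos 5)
Your proof is correct and uses the same essential ingredients as the paper's: applying the elementwise hypothesis to the kernel-pair projections in one direction, and factoring any parallel pair $x,y$ with $fx=fy$ through the pullback in the other. The only difference is organizational — you route both implications through the common characterization ``$(X,f)$ is the coequalizer of its kernel pair,'' whereas the paper proves the two directions separately — but the underlying argument is the same.
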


\begin{proof}
Let $f\colon A \to B$ be our morphism.
First suppose that $f$ is an effective epimorphism.
Let $g\colon A \to C$ be a morphism with the property that $gx = gy$ whenever $fx = fy$. 
Since $f$ is an effective epimorphism, then the commutative diagram
\begin{center}
\begin{tikzcd}
A \times_B A
\arrow{r}[above]{\pi_1}
\arrow{d}[left]{\pi_2} &
A
\arrow{d}[right]{f} \\
A
\arrow{r}[below]{f} &
B
\end{tikzcd}
\end{center}
%
is both a pushout and pullback diagram.
Since the diagram is commutative, i.e. $f\pi_1 = f\pi_2$, then $g\pi_1 = g\pi_2$.
Now the universal property of pushouts implies that there exists a unique $h\colon B \to C$ such that $g = hf$.
Hence $f$ is a strict epimorphism. 

To prove the converse, suppose that $f$ is a strict epimorphism. Consider the diagram
\begin{center}
$\EuScript{F} \coloneqq \left\{
\begin{tikzcd}
A \times_B A
\arrow[r, yshift = 1ex, "\pi_1"]
\arrow[r, yshift = -1ex, "\pi_2"'] &
A
\end{tikzcd}\right\}.$
\end{center}
%
We will show that $B$ is $\Coeq(\EuScript{F})$ by showing that $B$ satisfies the univeral property of colimits with respect to $\EuScript{F}$.
Specifically, suppose we have a morphism $\EuScript{F} \to C$, i.e. there is a morphism $g\colon A \to C$ such that $g\pi_1 = g\pi_2$.

Suppose we know $gx = gy$ whenever $x,y\colon D\to A$ and $fx = fy$.
Then, since $f$ is strict, this implies that there exists a unique $h\colon B \to C$ such that $g = hf$.
Hence, $B$ satisfies the universal property of colimits and so $B \cong \Coeq{\EuScript{F}}$.

Thus to show that $f$ is an effective epimorphism, it suffices to show:
\begin{center} if $x,y\colon D\to A$ and $fx = fy$, then $gx = gy$.\end{center}
For a fixed pair $x,y\colon D\to A$ such that $fx = fy$, we have the commutative diagram
\begin{center}
\begin{tikzcd}
D
\arrow{r}[above]{x}
\arrow{d}[left]{y} &
A \arrow{d}[right]{f} \\
A \arrow{r}[below]{f} &
B
\end{tikzcd}
\end{center}
%
Thus, by the universal property of pullbacks, both $x$ and $y$ factor through the pullback $A \times_B A$, i.e. $x = \pi_1 \alpha$ and $y = \pi_2\alpha$ for some unique morphism $\alpha\colon D \to A \times_B A$.
Therefore, our assumption $g\pi_1 = g\pi_2$ implies
$$gx = g\pi_1\alpha = g\pi_2\alpha = gy.$$
Hence $g$ has the property that $gx = gy$ whenever $fx = fy$.
\end{proof}

\begin{cor}\label{eff epi composition}
If the category $\EuScript{C}$ has all pullbacks, then universal effective epimorphisms are closed under composition.
\end{cor}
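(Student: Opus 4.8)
The plan is to reduce the statement to the strict-epimorphism picture of Proposition \ref{un eff epis are un strict epis} and to the pasting lemma for pullbacks. Fix universal effective epimorphisms $f\colon A\to B$ and $g\colon B\to C$; I want $g\circ f$ to be a universal effective epimorphism. Everything hinges on one \emph{key claim}: the composite of two universal effective epimorphisms is again an effective epimorphism. Granting the claim, universality of $g\circ f$ comes almost for free, so I would isolate the claim and prove it first.

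To get universality from the claim, I would use that, in a category with all pullbacks, a pullback of a composite is a composite of pullbacks. Given any $h\colon Z\to C$, first pull back $g$ along $h$ to obtain $g'\colon B'\to Z$ (with $B'\to B$), then pull back $f$ along $B'\to B$ to obtain $f'\colon A'\to B'$; by the pasting lemma the outer rectangle exhibits $g'\circ f'$ as the pullback of $g\circ f$ along $h$. Since a pullback of a pullback is a pullback, every pullback of $f'$ (resp.\ $g'$) is a pullback of $f$ (resp.\ $g$), so $f'$ and $g'$ are themselves universal effective epimorphisms; the key claim then makes $g'\circ f'$ an effective epimorphism. As $h$ was arbitrary, every pullback of $g\circ f$ is an effective epimorphism, which together with the claim applied to $g\circ f$ itself is exactly what universality demands.

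For the key claim I would invoke Proposition \ref{un eff epis are un strict epis} and verify that $g\circ f$ is a strict epimorphism. So let $k\colon A\to D$ satisfy $kx=ky$ whenever $(g\circ f)x=(g\circ f)y$. Because $fx=fy$ forces $(gf)x=(gf)y$, the hypothesis on $k$ already gives $kx=ky$ whenever $fx=fy$, and strictness of $f$ produces a unique $k'\colon B\to D$ with $k=k'f$. It remains to factor $k'$ through the strict epimorphism $g$, i.e.\ to check $k'u=k'v$ whenever $u,v\colon T\to B$ satisfy $gu=gv$. This is the main obstacle: the hypothesis on $k$ only controls morphisms into $A$, whereas $u,v$ land in $B$. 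I would resolve it by pulling $f$ back along $u$ and along $v$ (legitimate since all pullbacks exist), getting epimorphisms $q_u\colon P_u\to T$ and $q_v\colon P_v\to T$ — epimorphisms precisely because $f$ is universal — then forming $Q=P_u\times_T P_v$ and feeding the two composites $Q\to A$ into the hypothesis on $k$, using $gu=gv$ to match their images under $g\circ f$. This yields $k'u\circ t=k'v\circ t$ for the common map $t\colon Q\to T$, and since $t$ is a composite of pullbacks of $f$ it is an epimorphism, so it cancels to give $k'u=k'v$. Strictness of $g$ then delivers the unique $h\colon C\to D$ with $k'=hg$, whence $k=hgf$; uniqueness of $h$ follows because $g\circ f$, being a composite of epimorphisms, is an epimorphism. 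The delicate point throughout is transferring the cancellation property from $A$ up to $B$, which is exactly where pullback-stability of $f$ — universality, not mere effectiveness — is indispensable.
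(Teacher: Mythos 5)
Your proof is correct, but it takes a different route from the paper: the paper's entire proof of Corollary \ref{eff epi composition} is a citation — it translates to strict epimorphisms via Proposition \ref{un eff epis are un strict epis} and then invokes Kelly's result that totally regular (i.e.\ universal strict) epimorphisms are closed under composition. You use the same translation but then prove the closure statement from scratch. Your reduction of universality to the key claim via the pasting lemma is exactly right, and your verification that $g\circ f$ is a strict epimorphism is sound: the factorization $k=k'f$ comes from strictness of $f$, and the delicate step — showing $k'u=k'v$ whenever $gu=gv$ — correctly exploits that the projections $q_u,q_v$ and the induced map $t\colon P_u\times_T P_v\to T$ are epimorphisms precisely because they are (composites of) pullbacks of the \emph{universal} effective epimorphism $f$; this is indeed the one place where mere effectiveness of $f$ would not suffice, and it is the mathematical heart of Kelly's result that the paper leaves as a black box. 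What your approach buys is a self-contained argument that makes visible where pullback-stability is genuinely used; what the paper's approach buys is brevity and an explicit pointer to the literature where the (essentially identical) argument lives.
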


\begin{proof}
In \cite[][Proposition 5.11]{kellymono} Kelly proves that totally regular epimorphisms are closed under composition;
our Corollary follows immediately from Kelly's result and our Proposition \ref{un eff epis are un strict epis}.
We will end with a few remarks: what Kelly called regular epimorphisms are what we are calling strict epimorphisms, and
Kelly's \textit{totally} condition is precisely our \textit{universal} condition.
\end{proof}

Before our next result, we review some definitions.
Let $\EuScript{E}$ be a category with small hom-sets, all finite limits and all small colimits.
Let $E_\alpha$ be a family of objects in $\EuScript{E}$ and $E = \amalg_\alpha E_\alpha$.

The coproduct $E$ is called \textit{disjoint} if every coproduct inclusion $i_\alpha\colon E_\alpha\to E$ is a monomorphism and, whenever $\alpha\neq\beta$, $E_\alpha\times_E E_\beta$ is the initial object in $\EuScript{E}$.

The coproduct $E$ is called \textit{stable} (under pullback) if for every 
$f\colon D\to E$ in $\EuScript{E}$, the morphisms $j_\alpha$ obtained from the pullback diagrams
\begin{center}
\begin{tikzcd}
D\times_E E_\alpha \arrow{r} \arrow{d}[left]{j_\alpha} &
E_\alpha \arrow{d}[right]{i_\alpha} \\
D \arrow{r}[below]{f} &
E
\end{tikzcd}
\end{center}
%
induce an isomorphism $\coprod_\alpha(D\times_E E_\alpha)\cong D$.

\begin{rmk}\label{stable implies coproducts and pullbacks commute}
If every coproduct in $\EuScript{E}$ is stable, then the pullback operation $-\times_E D$ ``commutes'' with coproducts, i.e.\ 
$(\coprod_\alpha B_\alpha)\times_E D \cong \coprod_\alpha (B_\alpha\times_E D)$.
\end{rmk}

\begin{rmk}\label{initial object condition}
If a category $\EuScript{C}$ with an initial object $\emptyset$ has stable coproducts, then the existance of an arrow $X\to\emptyset$ implies $X\cong\emptyset$.
Indeed, consider $\EuScript{C}(X,Z)$, which has at least one element since it contains the composition $X\to\emptyset\to Z$.
We will prove that any two elements $f,g\in\EuScript{C}(X,Z)$ are equal.

By Remark \ref{stable implies coproducts and pullbacks commute},
$X\cong  X \times_\emptyset \emptyset \cong X\times_\emptyset (\emptyset\amalg\emptyset) \cong (X\times_\emptyset \emptyset)\amalg (X\times_\emptyset \emptyset) \cong X\amalg X $.
Let $\phi$ represent this isomorphism $X \amalg X\to X$.
Let $i_0$ and $i_1$ be the two natural maps $X\to X\amalg X$.
Then $id_X = \phi i_0$ and $id_X = \phi i_1$.
But $\phi$ is an isomorphism and so $i_0 = i_1$.

Now use $f$ and $g$ to induce the arrow $f\amalg g\colon X \amalg X\to Z$, i.e. $(f\amalg g)i_0 = f$ and $(f\amalg g)i_1 = g$.
Since $i_0 = i_1$, then $f = g$.


\end{rmk}

\begin{lemma}\label{coprod of eff epi's}
Let $\EuScript{C}$ be a category with disjoint and stable coproducts, and an initial object. 
Suppose $f_\alpha\colon A_\alpha\to B_\alpha$ are effective epimorphisms for all $\alpha\in\EuScript{A}$. Then $\coprod_{\EuScript{A}} f_\alpha \colon \coprod_{\EuScript{A}} A_\alpha\to \coprod_{\EuScript{A}} B_\alpha$ is an effective epimorphism (provided all necessary coproducts exist). Moreover, if $\EuScript{C}$ has all pullbacks and coproducts, and the $f_\alpha$ are universal effective epimorphisms, then $\coprod_{\EuScript{A}} f_\alpha$ is also a universal effective epimorphism.
\end{lemma}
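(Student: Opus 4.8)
The plan is to verify the defining conditions of an effective epimorphism directly, the crux being an explicit computation of the relevant pullback. Write $A=\coprod_\alpha A_\alpha$, $B=\coprod_\alpha B_\alpha$, $f=\coprod_\alpha f_\alpha$, and let $i_\gamma\colon B_\gamma\to B$ denote the coproduct inclusions, so that the map $A_\gamma\to B$ is $i_\gamma\circ f_\gamma$. Since coproducts are stable, pullback commutes with coproducts (Remark \ref{stable implies coproducts and pullbacks commute}), giving $A\times_B A\cong\coprod_{\alpha,\beta}(A_\alpha\times_B A_\beta)$. I would evaluate the summands by pullback pasting along the factorizations $A_\gamma\to B_\gamma\to B$. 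For $\alpha\neq\beta$ this rewrites $A_\alpha\times_B A_\beta$ as a pullback against $B_\alpha\times_B B_\beta$, which is initial by disjointness; since pulling back the initial object (the empty coproduct) again yields the initial object by Remark \ref{stable implies coproducts and pullbacks commute}, the off-diagonal terms are $\emptyset$. For $\alpha=\beta$, the fact that $i_\alpha$ is a monomorphism gives $B_\alpha\times_B B_\alpha\cong B_\alpha$, whence $A_\alpha\times_B A_\alpha\cong A_\alpha\times_{B_\alpha}A_\alpha$, which exists because $f_\alpha$ is an effective epimorphism. Discarding the initial summands yields $A\times_B A\cong\coprod_\alpha(A_\alpha\times_{B_\alpha}A_\alpha)$ (so the only existence hypothesis needed is that of the coproducts, as assumed), with the two projections to $A$ being the coproduct of the projections $A_\alpha\times_{B_\alpha}A_\alpha\rightrightarrows A_\alpha$.

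Next I would pass to coequalizers. Because colimits commute with colimits, the coequalizer of a coproduct of parallel pairs is the coproduct of the coequalizers, so
$$\Coeq\Bigl(A\times_B A\ \substack{\longrightarrow\\ \longrightarrow}\ A\Bigr)\cong\coprod_\alpha\Coeq\Bigl(A_\alpha\times_{B_\alpha}A_\alpha\ \substack{\longrightarrow\\ \longrightarrow}\ A_\alpha\Bigr)\cong\coprod_\alpha B_\alpha=B,$$
the middle isomorphism holding because each $f_\alpha$ is an effective epimorphism. Checking that these identifications are compatible with the canonical maps down to $B$ shows that the canonical map $c\colon\Coeq(A\times_B A\rightrightarrows A)\to B$ is an isomorphism; as noted just before the statement, this third condition forces $f$ to be an epimorphism as well, so $f=\coprod_\alpha f_\alpha$ is an effective epimorphism.

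For the ``Moreover'' clause I would reduce to the first part. Given any $g\colon Z\to B$, set $Z_\alpha\coloneqq Z\times_B B_\alpha$; stability gives $Z\cong\coprod_\alpha Z_\alpha$ with restrictions $g_\alpha\colon Z_\alpha\to B_\alpha$. Pulling $f$ back along $g$, using once more that pullback commutes with coproducts together with pullback pasting along $A_\alpha\to B_\alpha\to B$, produces $Z\times_B A\cong\coprod_\alpha(Z_\alpha\times_{B_\alpha}A_\alpha)$, under which the projection $\pi_g\colon Z\times_B A\to Z$ is identified with $\coprod_\alpha\pi_{g_\alpha}$. Each $\pi_{g_\alpha}\colon Z_\alpha\times_{B_\alpha}A_\alpha\to Z_\alpha$ is an effective epimorphism because $f_\alpha$ is a \emph{universal} effective epimorphism, so applying the first part of the lemma to the family $\{\pi_{g_\alpha}\}$ shows $\pi_g$ is an effective epimorphism. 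As $g$ was arbitrary, $f$ is a universal effective epimorphism.

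The repeated uses of pullback pasting and the bookkeeping of coproduct inclusions are routine; the one step I expect to require genuine care is the computation of $A\times_B A$, and within it the vanishing of the off-diagonal terms. This is precisely where both hypotheses are essential: disjointness is what makes $B_\alpha\times_B B_\beta$ initial, and stability is what both lets the pullback distribute over the coproduct and propagates the initial object through the further pullbacks. Once $A\times_B A$ is decomposed, the interchange of coequalizers with coproducts is what makes the remainder of the argument go through cleanly.
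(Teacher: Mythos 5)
Your proposal is correct and follows essentially the same route as the paper: stability to distribute the pullback $A\times_B A$ over the coproducts, disjointness plus propagation of the initial object to kill the off-diagonal terms, the monomorphy of the coproduct inclusions to identify the diagonal terms with $A_\alpha\times_{B_\alpha}A_\alpha$, interchange of coequalizers with coproducts, and the same reduction of universality to the first part via $Z\cong\coprod_\alpha(Z\times_B B_\alpha)$. The only (cosmetic) difference is direction: the paper builds up from $\coprod_\alpha(A_\alpha\times_{B_\alpha}A_\alpha)$, whose existence is known, and verifies the universal property of the pullback over $B$, whereas you decompose $A\times_B A$ top-down --- your parenthetical about existence already addresses the point this ordering is meant to handle.
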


\begin{proof}
Our basic argument is
\begin{align*}
\coprod_{\alpha\in\EuScript{A}} B_\alpha
& \cong \coprod_{\alpha\in\EuScript{A}} \Coeq
\left(\begin{tikzcd}
A_\alpha \times_{B_\alpha} A_\alpha
\arrow[d, shift right = 2]
\arrow[d, shift left = 2] \\
A_\alpha
\end{tikzcd}\right)\\
& \cong \Coeq
\left(\begin{tikzcd}
\coprod_{\alpha\in\EuScript{A}} \left(A_\alpha \times_{B_\alpha} A_\alpha\right)
\arrow[d, shift right = 2]
\arrow[d, shift left = 2] \\
\coprod_{\alpha\in\EuScript{A}} A_\alpha
\end{tikzcd}\right)\\
& \cong \Coeq
\left(\begin{tikzcd}
\left(\coprod_{\alpha\in\EuScript{A}} A_\alpha\right) \times_{\coprod_{\beta\in\EuScript{A}}B_\beta} \left(\coprod_{\gamma\in\EuScript{A}} A_\gamma\right)
\arrow[d, shift right = 2]
\arrow[d, shift left = 2] \\
\coprod_{\eta\in\EuScript{A}} A_\eta
\end{tikzcd}\right)
\end{align*}
The first isomorphism comes from assuming the $f_\alpha$ are effective epimorphisms.
The second isomorphism comes from commuting colimits.
The last isomorphism comes from the isomorphism
\begin{equation}\label{eff epi isom}
\coprod_{\alpha\in\EuScript{A}} \left(A_\alpha\times_{B_\alpha}A_\alpha\right) \cong \left(\coprod_{\alpha\in\EuScript{A}} A_\alpha\right) \times_{\coprod_{\beta\in\EuScript{A}}B_\beta} \left(\coprod_{\gamma\in\EuScript{A}} A_\gamma\right)
\end{equation}
which we will now justify.

Let $B = \coprod_{\beta\in\EuScript{A}} B_\beta$.
Since we know $\coprod_{\alpha\in\EuScript{A}} \left(A_\alpha\times_{B_\alpha}A_\alpha\right)$ exists, we will start here.
First we will show that $A_\alpha \times_{B_\alpha} A_\alpha\cong A_\alpha\times_B A_\alpha$ by showing that the object $A_\alpha \times_{B_\alpha} A_\alpha$, which we know exists, satisfies the requirements of $\lim (A_\alpha\,\substack{\to\\\to}\,B)$, which we have not assumed exists.
Notice that our maps $A_\alpha\xrightarrow{\sigma_\alpha} B$ factor as $A_\alpha\overset{f_\alpha}{\rightarrow} B_\alpha \overset{i_\alpha}{\rightarrow} B$, where the $i_\alpha$'s are the canonical inclusion maps.
This implies $A_\alpha \times_{B_\alpha} A_\alpha$ maps to the diagram $(A_\alpha\,\substack{\to\\\to}\,B)$ appropriately.
Now consider the parallel arrows $g,h\colon D\to A_\alpha$ such that $\sigma_\alpha g = \sigma_\alpha h$.
By the factorization, $i_\alpha f_\alpha g = i_\alpha f_\alpha h$.
Since $i_\alpha$ is a monomorphism, then $f_\alpha g = f_\alpha h$.
Now the universal property of the pullback $A_\alpha\times_{B_\alpha} A_\alpha$ gives us a unique map $D\to A_\alpha\times_{B_\alpha} A_\alpha$ that factors both $g$ and $h$ as desired.
Hence $A_\alpha \times_{B_\alpha} A_\alpha$ is $\lim (A_\alpha\,\substack{\to\\\to}\,B)$. 
Therefore $\coprod_{\alpha\in\EuScript{A}} \left(A_\alpha\times_{B_\alpha}A_\alpha\right) \cong \coprod_{\alpha\in\EuScript{A}} \left(A_\alpha\times_{B}A_\alpha\right).$

Since coproducts are disjoint, then $B_\alpha \times_B B_\gamma = \emptyset$ whever $\alpha\neq\gamma$.
Thus by Remark \ref{initial object condition} and the following diagram
\begin{center}
\begin{tikzcd}
A_\alpha \times_B A_\gamma \arrow{rr} \arrow[dr, dotted, "\exists"] \arrow{dd}
& & A_\gamma \arrow{d}[right]{f_\gamma} \\
& B_\alpha\times_B B_\gamma \arrow{d} \arrow{r} & B_\gamma \arrow{d}[right]{i_\gamma} \\
A_\alpha \arrow{r}[below]{f_\alpha} & B_\alpha \arrow{r}[below]{i_\alpha} & B
\end{tikzcd}
\end{center}
we see that $A_\alpha\times_B A_\gamma = \emptyset$ whenever $\alpha\neq\gamma$.
This implies that $$\coprod_{\alpha\in\EuScript{A}} \left(A_\alpha\times_{B}A_\alpha\right) \cong \coprod_{\alpha,\gamma\in\EuScript{A}} \left(A_\alpha\times_B A_\gamma\right).$$

Lastly, the commutativity of coproducts and pullbacks (see Remark \ref{stable implies coproducts and pullbacks commute}) yields $$\coprod_{\alpha,\gamma\in\EuScript{A}} \left(A_\alpha\times_B A_\gamma\right) \cong \coprod_{\alpha\in\EuScript{A}} A_\alpha \times_{B} \coprod_{\gamma\in\EuScript{A}} A_\gamma$$
which completes the justification of (\ref{eff epi isom}).

We have now shown that $\coprod_{\EuScript{A}} f_\alpha$ is an effective epimorphism.
The universality of $\coprod_{\EuScript{A}} f_\alpha$ is a consequence of the disjoint and stable coproducts.
Indeed, suppose $\EuScript{C}$ has all pullbacks and let $D\to B$ be a given morphism. Stability of coproducts implies that $D\cong \amalg_{\alpha\in\EuScript{A}} (D\times_B B_\alpha)$. It follows that the following is a pullback square
\begin{center}
\begin{tikzcd}
\coprod_{\alpha\in\EuScript{A}} (D\times_B B_\alpha \times_{B_\alpha} A_\alpha) \arrow{d}[left]{g} \arrow{rr} & &
\coprod_{\alpha\in\EuScript{A}} A_\alpha \arrow{d}[right]{\coprod f_\alpha} \\
\coprod_{\alpha\in\EuScript{A}} D\times_B B_\alpha \arrow[equal]{r} & D \arrow{r} &
\coprod_{\alpha\in\EuScript{A}} B_\alpha
\end{tikzcd}
\end{center}
where $g = \coprod_{\alpha\in\EuScript{A}} g_\alpha$ and $g_\alpha\colon D\times_B B_\alpha\times_{B_\alpha} A_\alpha \to D \times_B B_\alpha$ is the natural map.
Moreover, $g_\alpha$ is the pullback of the universal effective epimorphism $f_\alpha$.
Thus each $g_\alpha$ is an effective epimorphism and so we have already shown that $\amalg_{\alpha} g_\alpha = g$ is a an effective epimorphism.
\end{proof}



\section{Index-Functor Category}\label{section IF cat stuff}

In this section we will reframe what it means to be a `diagram in $\EuScript{C}$' by defining and discussing a special 2-category.
This 2-category will serve as a key tool in our manipulation of colimits and in proving that certain collections form Grothendieck topologies.  
\bigskip

For a fixed category $\EuScript{C}$, define $\mathscr{A}_\EuScript{C}$ to be the following 2-category:
\begin{itemize}
\item An object is a pair $(I,F)$ where $I$ is a small category and $F\colon I\to\EuScript{C}$ is a functor.
\item A morphism is a pair $(g,\eta)\colon (I,F) \to (I',F')$.
The $g$ is a functor $g\colon I\to I'$.
The $\eta$ is a natural transformation $\eta\colon F \to F'\circ g$.
Morally, we think of $g$ as almost being an arrow in $(Cat\downarrow\EuScript{C})$ where $Cat$ is the category of small categories; the natural transformation $\eta$ replaces the commutativity required for an arrow in the overcategory.
\item A 2-morphism from $(f,\eta_f)\colon  (I,D) \to (J,E)$ to $(g,\eta_g)\colon  (I,D) \to (J,E)$ is a natural transformation $\theta\colon f \to g$ such that for each $i$ in the objects of $I$, the following is a commutative diagram
\begin{center}
\begin{tikzcd}
& Di \arrow{ld}[above left]{(\eta_f)_i} \arrow{rd}[above right]{(\eta_g)_i} & \\
Efi \arrow{rr}[below]{E\theta i} & & Egi .
\end{tikzcd}
\end{center}
\end{itemize}

\begin{defn}
We call $\mathscr{A}_\EuScript{C}$ the {\it Index-Functor Category} for $\EuScript{C}$.
\end{defn}

\begin{note}\label{trivial cat note}
Let $\ast$ be the category consisting of one object and no non-identity morphisms. We will abuse notation and also use $\ast$ to represent its unique object. 
\end{note}
\begin{note}
For any object $Z$ of $\EuScript{C}$, let $cZ$ be the object of $\mathscr{A}_\EuScript{C}$ given by $(\ast,c_Z)$ where $c_Z(\ast) = Z$, i.e. $cZ$ is the constant diagram on $Z$.
\end{note}
\begin{note}
For a sieve $T$ on $X$, we will use $T$ as shorthand notation for the object $(T,U)$ of $\mathscr{A}_\EuScript{C}$.
(See Notation \ref{forgetful functor} for the definition of $U$.)
\end{note}
\begin{note}\label{cocone sieve map}
Let $T$ be a sieve on $X$.
We have a canonical map $\phi_T\colon T \to cX$ given by $\phi_T = (t,\varphi_T)$ where $t$ is the terminal map $T\to\ast$ and $\varphi_T\colon U \to (c_X \circ t)$ is given by $(\varphi_T)_f = f$ for $f\in T$.
\end{note}

\begin{rmk}\label{maps in IF on constants}
Notice that for all objects $V$ and $W$ of $\EuScript{C}$,
$$\mathscr{A}_\EuScript{C}(cV,cW) \cong \EuScript{C}(V,W)$$
since the only non-determined information in a map from $cV$ to $cW$ is the natural transformation $c_V\to c_W\circ t$, which is just a map $V\to W$ in $\EuScript{C}$.
In particular, we can view the $Hom$-sets in $\mathscr{A}_\EuScript{C}$ as a generalization of the $Hom$-sets in $\EuScript{C}$.
\end{rmk}

\subsection{$Hom$-sets}

The $Hom$-sets in $\mathscr{A}_\EuScript{C}$ will be particularly useful in our manipulation of colimits (as the following Lemma showcases). We use this section to discuss some of their properties.

\begin{lemma}\label{reword colim defn}
If $D\colon I\to\EuScript{C}$ and $X$ is a cocone for $D$, then we have an induced morphism $\phi\colon (I,D)\to cX$ in $\mathscr{A}_\EuScript{C}$.
The object $X$ is a colimit for $D$ if and only if the induced morphism $\phi^\ast\colon\mathscr{A}_\EuScript{C}(cX,cY)\to\mathscr{A}_\EuScript{C}((I,D),cY)$ is a bijection for all objects $Y$ of $\EuScript{C}$.
\end{lemma}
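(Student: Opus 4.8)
The plan is to unwind both $\Hom$-sets concretely and then recognize $\phi^\ast$ as the usual comparison map expressing the universal property of the colimit.

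First I would observe that a morphism $(I,D)\to cY$ in $\mathscr{A}_\EuScript{C}$ is forced to have the terminal functor $t\colon I\to\ast$ as its functor component (since $\ast$ has a single object), so such a morphism amounts to a natural transformation $\eta\colon D\to c_Y\circ t$. But a natural transformation $D\to c_Y\circ t$ is exactly a cocone on $D$ with vertex $Y$. Hence $\mathscr{A}_\EuScript{C}((I,D),cY)$ is canonically the set of cocones on $D$ with vertex $Y$, and the morphism $\phi=(t,\psi)$ induced by the given cocone is just that cocone regarded as an $\mathscr{A}_\EuScript{C}$-morphism (exactly as in Notation \ref{cocone sieve map}, with $\psi$ playing the role of $\varphi_T$). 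On the other side, Remark \ref{maps in IF on constants} identifies $\mathscr{A}_\EuScript{C}(cX,cY)$ with $\EuScript{C}(X,Y)$, a map $h\colon X\to Y$ corresponding to the pair $(\mathrm{id}_\ast,\mu)$ with $\mu_\ast=h$.

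Next I would compute $\phi^\ast$ under these identifications. Using the composition rule in $\mathscr{A}_\EuScript{C}$ — the functor components compose to $\mathrm{id}_\ast\circ t=t$, and the natural-transformation component is obtained by whiskering $\mu$ along $t$ and then composing with $\psi$ — one checks that the $i$-th component of $\phi^\ast(h)$ is $\mu_{t(i)}\circ\psi_i=\mu_\ast\circ\psi_i=h\circ\psi_i$. In other words, $\phi^\ast$ sends $h\in\EuScript{C}(X,Y)$ to the cocone $h\circ\psi$ obtained by postcomposing the cocone $\psi$ with $h$; this is precisely the canonical map from $\EuScript{C}(X,Y)$ to the set of cocones on $D$ with vertex $Y$.

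Finally, I would invoke the definition of a colimit: the vertex $X$ (with its cocone $\psi$) is a colimit of $D$ exactly when, for every $Y$, every cocone on $D$ with vertex $Y$ factors uniquely through $\psi$, i.e.\ when the postcomposition map $h\mapsto h\circ\psi$ is a bijection. Since we have just identified this map with $\phi^\ast$, the two conditions coincide and the lemma follows. The only slightly delicate point — and the step I would be most careful with — is the bookkeeping in the $2$-category's composition rule: confirming that the functor components compose to $t$ and that the whiskered transformation really produces the components $h\circ\psi_i$. Everything else is a direct dictionary between the language of cocones and that of $\mathscr{A}_\EuScript{C}$-morphisms into constant diagrams.
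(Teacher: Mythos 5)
Your proof is correct, and since the paper leaves this lemma to the reader, your argument is exactly the intended one: identify $\mathscr{A}_\EuScript{C}((I,D),cY)$ with cocones on $D$ with vertex $Y$, identify $\mathscr{A}_\EuScript{C}(cX,cY)$ with $\EuScript{C}(X,Y)$ via Remark \ref{maps in IF on constants}, check that $\phi^\ast$ becomes postcomposition with the given cocone, and invoke the universal property of the colimit. The composition bookkeeping you flag as the delicate point does work out as you state, since the functor component of any map into $cY$ is forced to be terminal.
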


\begin{proof}
Left to the reader.
\end{proof}

\begin{lemma}\label{ll triangle commutes}
Let $(f,\eta_f),(g,\eta_g)\colon (I,F)\to (J,G)$ be two morphisms in $\mathscr{A}_\EuScript{C}$.
If there exists a 2-morphism $\alpha\colon (f,\eta_f)\to (g,\eta_g)$, then the induced maps \\
$(f,\eta_f)^\ast, (g,\eta_g)^\ast\colon \mathscr{A}_\EuScript{C}((J,G),cY) \to \mathscr{A}_\EuScript{C}((I,F),cY)$ are equal for all objects $Y$ in $\EuScript{C}$.
\end{lemma}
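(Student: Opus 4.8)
The plan is to unwind both pullback maps explicitly and check that they agree on every element. First I would record what the objects in play are. A morphism $(t,\kappa)\in\mathscr{A}_\EuScript{C}((J,G),cY)$ has forced functor part $t\colon J\to\ast$ (the terminal functor), so all of its content is the natural transformation $\kappa\colon G\to c_Y\circ t$; explicitly this is a cocone on $G$ with vertex $Y$, i.e.\ a family $\kappa_j\colon Gj\to Y$ natural in $j$. The maps $(f,\eta_f)^\ast$ and $(g,\eta_g)^\ast$ are precomposition, so I need the composition rule in $\mathscr{A}_\EuScript{C}$: composing $(f,\eta_f)\colon(I,F)\to(J,G)$ with $(t,\kappa)\colon(J,G)\to cY$ yields $(t\circ f,\mu)$, where $\mu$ is obtained by whiskering $\kappa$ by $f$ and then composing with $\eta_f$, so that
\[
\mu_i=\kappa_{f(i)}\circ(\eta_f)_i\colon Fi\to Y
\]
for each object $i$ of $I$. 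The analogous computation for $(g,\eta_g)$ gives components $\kappa_{g(i)}\circ(\eta_g)_i$.

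With this in hand the Lemma reduces to the single componentwise identity
\[
\kappa_{f(i)}\circ(\eta_f)_i=\kappa_{g(i)}\circ(\eta_g)_i
\qquad\text{for every object }i\text{ of }I.
\]
I would prove this using the two structural facts available. The defining triangle of the $2$-morphism $\theta\colon f\to g$ (taken with $E=G$) says $(\eta_g)_i=G(\theta_i)\circ(\eta_f)_i$. Separately, naturality of the cocone $\kappa$ at the morphism $\theta_i\colon f(i)\to g(i)$ of $J$ gives $\kappa_{g(i)}\circ G(\theta_i)=\kappa_{f(i)}\circ c_Y(\theta_i)=\kappa_{f(i)}$, since $c_Y(\theta_i)=\mathrm{id}_Y$. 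Substituting the first identity into the right-hand side of the desired equation and then applying the second collapses $\kappa_{g(i)}\circ G(\theta_i)\circ(\eta_f)_i$ to $\kappa_{f(i)}\circ(\eta_f)_i$, which is exactly the left-hand side.

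The only real obstacle is bookkeeping. Getting the whiskering in the composition right is essential --- the subscript on $\kappa$ must be $f(i)$ rather than $i$ --- and one must read the $2$-morphism triangle in the correct direction so that $G(\theta_i)$ appears on the correct side. Once these two conventions are pinned down the argument is a two-line diagram chase, and I would present it as such rather than belaboring the routine verifications of the composition formula (which in any case follow directly from the definition of $\mathscr{A}_\EuScript{C}$).
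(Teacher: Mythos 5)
Your proposal is correct and follows essentially the same route as the paper's proof: both reduce the claim to the componentwise identity $\kappa_{f(i)}\circ(\eta_f)_i=\kappa_{g(i)}\circ(\eta_g)_i$ and establish it by combining the defining triangle of the 2-morphism (giving $(\eta_g)_i=G(\theta_i)\circ(\eta_f)_i$) with naturality of the cocone at $\theta_i$ (giving $\kappa_{g(i)}\circ G(\theta_i)=\kappa_{f(i)}$). The paper presents this as a single commutative diagram rather than two displayed identities, but the content is identical.
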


\begin{proof}
Let $(k,\eta_k)\in \mathscr{A}_\EuScript{C}((J,G),cY)$.
Then
$$(f,\eta_f)^\ast (k,\eta_k) = (k\circ f, f^\ast(\eta_k)\circ\eta_f) \quad
\text{and} \quad
(g,\eta_g)^\ast (k,\eta_k) = (k\circ g, g^\ast(\eta_k)\circ\eta_g).$$
%
But $k$ must be the terminal functor $J\to\ast$ and thus $k\circ f = k\circ g$.
To see that $ f^\ast(\eta_k)\circ\eta_f = g^\ast(\eta_k)\circ\eta_g$ fix an object $i\in I$ and notice that we have the following diagram:
\begin{center}
\begin{tikzcd}
& F_i \arrow{dl}[above left]{(\eta_f)_i} \arrow{dr}[above right]{(\eta_g)_i} & \\
Gfi \arrow{rr}[below]{G\alpha_i} \arrow{dr}[below left]{\eta_k} & &  
Ggi \arrow{dl}[below right]{\eta_k} \\ 
& Y &
\end{tikzcd}
\end{center}
where the upper part of the diagram commutes because $\alpha$ is a 2-morphism and the lower part commutes because of the natural transformation $\eta_k$.
Since the left vertical composition in the above diagram is $ (f^\ast(\eta_k)\circ\eta_f)_i$ and the right vertical composition is $ (g^\ast(\eta_k)\circ\eta_g)_i$, then this completes the proof.
\end{proof}


Before the last result we include a reminder about Grothendieck constructions.
Whenever we have a functor $G\colon A\to Cat$, where $Cat$ is the category of small categories, we can create a \textit{Grothendieck construction} of $G$, which we will denote $\text{Gr}(G)$.
The objects of $\text{Gr}(G)$ are pairs $(a,\tau)$ where $a$ is an object of $A$ and $\tau$ is an object of $G(a)$.
The morphisms are pairs $(f,g)\colon (a,\tau)\to(a',\tau')$ where $f\colon a\to a'$ is a morphism in $A$ and $g\colon Gf(\tau) \to\tau'$ is a morphism in $G(a')$.

\begin{prop}\label{technical lemma}
Let $A$ and $\mathcal{C}$ be categories.
Suppose there exists functors $G\colon A\to Cat$, $\theta\colon A\to \mathcal{C}$ and $\sigma\colon \text{Gr}(G)\to \mathcal{C}$, and a morphism in $\mathscr{A}_\EuScript{C}$ of the form $F=(f,\eta)\colon (\text{Gr}(G), \sigma)\to(A,\theta)$ where $f(a,\tau) = a$.
If for all objects $a$ of $A$, 
$\theta(a)$  is the colimit of $\sigma(a,-)\colon G(a)\to \EuScript{C}$ where the isomorphism is induced by $\eta$,  
then the induced map $F^\ast\colon \mathscr{A}_\EuScript{C}((A,\theta),cY)\to \mathscr{A}_\EuScript{C}((\text{Gr}(G),\sigma),cY)$ is a bijection for all objects $Y$ of $\mathcal{C}$.
\end{prop}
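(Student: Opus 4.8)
The plan is to unwind both Hom-sets as sets of cocones, exactly in the spirit of Lemma \ref{reword colim defn}, and then verify the bijection directly. A morphism in $\mathscr{A}_\EuScript{C}((A,\theta),cY)$ is a pair $(t,\nu)$ whose functor part $t\colon A\to\ast$ is forced to be the terminal one, so the only data is a natural transformation $\nu\colon\theta\to c_Y$, i.e.\ a cocone $\{\nu_a\colon\theta(a)\to Y\}_a$ on $\theta$. Similarly $\mathscr{A}_\EuScript{C}((\text{Gr}(G),\sigma),cY)$ is the set of cocones $\{\mu_{(a,\tau)}\colon\sigma(a,\tau)\to Y\}$ on $\sigma$. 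Using the composition formula recalled in the proof of Lemma \ref{ll triangle commutes}, $F^\ast$ sends a cocone $\nu$ to the cocone $\mu$ with $\mu_{(a,\tau)}=\nu_a\circ\eta_{(a,\tau)}$. So I must show this assignment is injective and surjective.

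For injectivity I would fix $a$ and use that, by hypothesis, the maps $\{\eta_{(a,\tau)}\}_{\tau\in G(a)}$ form the colimit cocone exhibiting $\theta(a)=\colim_{G(a)}\sigma(a,-)$; in particular they are jointly epic. Hence $\nu_a\circ\eta_{(a,\tau)}=\nu'_a\circ\eta_{(a,\tau)}$ for all $\tau$ forces $\nu_a=\nu'_a$, and ranging over all $a$ gives $\nu=\nu'$.

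For surjectivity, given a cocone $\mu$ on $\sigma$ I would first restrict it to each fiber. For a morphism $\gamma\colon\tau\to\tau'$ in $G(a)$ there is a morphism $(\mathrm{id}_a,\gamma)$ of $\text{Gr}(G)$, and compatibility of $\mu$ with it shows that $\{\mu_{(a,\tau)}\}_\tau$ is a cocone on $\sigma(a,-)\colon G(a)\to\EuScript{C}$. Since $\theta(a)$ is the colimit of this diagram along $\eta_{(a,-)}$, there is a unique $\nu_a\colon\theta(a)\to Y$ with $\nu_a\circ\eta_{(a,\tau)}=\mu_{(a,\tau)}$; by construction this already guarantees $F^\ast(\nu)=\mu$ provided $\nu$ is a cocone, so the only remaining task is the compatibility $\nu_{a'}\circ\theta(\alpha)=\nu_a$ for every $\alpha\colon a\to a'$ in $A$.

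This last verification is the main obstacle, and the key is to test the two maps $\theta(a)\to Y$ after precomposing with the jointly epic family $\eta_{(a,\tau)}$. Applying naturality of $\eta$ to the morphism $(\alpha,\mathrm{id})\colon(a,\tau)\to(a',G\alpha(\tau))$ of $\text{Gr}(G)$ gives $\theta(\alpha)\circ\eta_{(a,\tau)}=\eta_{(a',G\alpha(\tau))}\circ\sigma(\alpha,\mathrm{id})$, and then compatibility of the cocone $\mu$ with this same morphism yields
$$\nu_{a'}\circ\theta(\alpha)\circ\eta_{(a,\tau)}=\nu_{a'}\circ\eta_{(a',G\alpha(\tau))}\circ\sigma(\alpha,\mathrm{id})=\mu_{(a',G\alpha(\tau))}\circ\sigma(\alpha,\mathrm{id})=\mu_{(a,\tau)}=\nu_a\circ\eta_{(a,\tau)}.$$
Since the $\eta_{(a,\tau)}$ are jointly epic as $\tau$ ranges over $G(a)$, this forces $\nu_{a'}\circ\theta(\alpha)=\nu_a$, so $\nu$ is a genuine cocone on $\theta$. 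This completes surjectivity and hence shows $F^\ast$ is a bijection. The only place the full strength of the hypothesis enters is the fiberwise colimit property, which supplies both the joint epicness used twice and the existence/uniqueness of the induced maps $\nu_a$.
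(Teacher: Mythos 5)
Your proof is correct and follows essentially the same route as the paper: both arguments reduce a map to $cY$ to a cocone, use the fiberwise colimit hypothesis to get injectivity (the paper phrases your ``jointly epic'' step as uniqueness of the induced map out of $\colim_{G(a)}\sigma(a,-)$), and prove surjectivity by inducing $\nu_a$ on each fiber and then checking naturality over $A$. If anything, your verification that $\nu_{a'}\circ\theta(\alpha)=\nu_a$ via the morphisms $(\alpha,\mathrm{id})$ of $\text{Gr}(G)$ is spelled out more explicitly than the corresponding step in the paper.
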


Remark: Fix $a\in A$, then $\eta_{(a,-)}\colon \sigma(a,-)\to\theta(a)$ is a natural transformation. In particular, $(\theta(a),\eta_{(a,-)})$ is a cocone under $\sigma(a,-)$. Our colimit assumption is specifically that this cocone is universal.

\begin{proof}
We start by showing that $F^\ast$ is an injection;
let $(k,\chi_k),(l,\chi_l)$ be in $\mathscr{A}_\EuScript{C} ((A,\theta), cY)$ such that $ F^\ast(k,\chi_k) =
F^\ast (l,\chi_l)$. In other words, suppose that
$(k\circ f, f^\ast(\chi_k)\circ\eta) =
(l\circ f, f^\ast(\chi_l)\circ\eta)$.
Since both $k$ and $l$ are functors $A\to \ast$ then they are both the terminal map, which is unique and hence $k=l$.

Now fix $a\in A$. Consider $(a,\tau)\in\text{Gr}(G)$.
For both $t=k$ and $t=l$, the natural transformations (i.e. second coordinates of the maps in question) at $(a,\tau)$ take the form
$$(f^\ast(\chi_t)\circ\eta)_{(a,\tau)} =
(\chi_t)_{a}\circ\eta_{(a,\tau)}\colon \sigma(a,\tau)\rightarrow \theta f(a,\tau) = \theta(a) \rightarrow c_Y t(a) = Y$$
where $c_Y$ comes from $cY = (\ast,c_Y)$.
Moreover, since $\eta$ and $\chi_t$ are both natural transformations, then these maps $\sigma(a,\tau)\to Y$ are compatible among all arrows in $G(a)$.
But by assumption $\colim_{G(a)}\sigma(a,-)\cong \theta(a)$.
Thus the maps $(\chi_t)_{a}\circ\eta_{(a,\tau)}$ define a map from the colimit, i.e. from $\theta(a)$ to $Y$.
By the universal property of colimits, there is only one choice for this map, namely $(\chi_t)_a$.
Moreover, since $(\chi_k)_{a}\circ\eta_{(a,\tau)} = (\chi_l)_{a}\circ\eta_{(a,\tau)}$, then $(\chi_k)_{a}$ and $(\chi_l)_{a}$ must define the same map out of the colimit.
Therefore $(\chi_k)_{a} = (\chi_l)_{a}$ for all $a\in A$ and this finishes the proof of injectivity.

To prove surjectivity, let $(m,\chi_m)\in\mathscr{A}_\EuScript{C} ((\text{Gr}(G),\sigma), cY)$. Let $(k,\chi_k)$ be the following pair:
\begin{itemize}
\item $k\colon A\to\ast$ is the terminal functor
\item $\chi_k$ is a collection of maps, one for each object $a$ of $A$, from $\theta(a)$ to $Y$.
The map for object $a$ is induced by the maps $(\chi_m)_{(a,\tau)}\colon  \sigma(a,\tau) \to Y$ for all $\tau$ in $G(a)$.
Note that these maps exist and are well defined because $\chi_m$ is a natural transformation and $\colim_{G(a)}{\sigma(a,-)} \cong \theta(a)$.
\end{itemize}
We claim two things: $(k,\chi_k)\in \mathscr{A}_\EuScript{C} ((A,\theta), cY)$ and $F^\ast (k,\chi_k) = (m,\chi_m)$

To prove the first claim we merely need to show that $\chi_k$ is a natural transformation
$\theta\to c_Y\circ k$.
By its definition, it is clear that $\chi_k$ does the correct thing on objects;
all we need to check is what it does to arrows in $A$.
Specifically, let $g\colon a\to b$ be a morphism in $A$.
Then for any $\tau\in G(a)$, $(g,id_{Gg(\tau)})$ is a morphism in $\text{Gr}(G)$.
Since $\chi_m\colon \sigma\to c_Y \circ m$ is a natural transformation, then we have the following commutative diagram
\begin{center}
\begin{tikzcd}
\sigma(a,\tau) \arrow{r}[above]{\sigma(g,id)} \arrow{d}[left]{\chi_m} & \sigma(b,Gg(\tau)) \arrow{d}[right]{\chi_m}\\
Y \arrow{r}[below]{id} & Y
\end{tikzcd}
\end{center}
and in particular, the map from diagram $\sigma(a,-)\colon G(a)\to\mathcal{C}$ to $Y$ factors through the map from diagram $\sigma(b,-)\colon G(b)\to \mathcal{C}$ to $Y$.
Thus the induced map $(\chi_k)_a\colon\colim_{G(a)}\sigma(a,-)\to Y$ factors through $(\chi_k)_b$. 
Furthermore, the natural transformation $\eta\colon\sigma\to\theta f$, which induces $\colim_{G(c)}\sigma(c,-)\cong \theta(c)$, ensures that this factorization is $(\chi_k)_a = (\chi_k)_b\circ \theta(g)$, which completes the proof that $\chi_k$ is a natural transformation.

To prove the second claim, we need to show that $F^\ast (k,\chi_k) = (k\circ f,f^\ast(\chi_k)\circ\eta)$ equals $(m,\chi_m)$.
Since both $k$ and $m$ are terminal functors, then $k\circ f = m$.
To see that $f^\ast(\chi_k)\circ\eta = \chi_m$, fix an object $(a,\tau)\in \text{Gr}(G)$.
Notice that $(f^\ast(\chi_k)\circ\eta)_{(a,\tau)}$ equals $(\chi_k)_a\circ \eta_{(a,\tau)}$, which is the composition
\begin{center}
\begin{tikzcd}
\sigma(a,\tau) \arrow{r} \arrow[bend right = 15]{rrr}[below]{\eta} &
\colim_{G(a)}{\sigma(a,-)} \arrow{rr}[above]{\text{induced by }\eta}[below]{\cong} & &
\theta(a) \arrow{r}[above]{\chi_k} &
Y.
\end{tikzcd}
\end{center}
But $\chi_k$ was created by inducing maps from the colimit to $Y$ based on $\chi_m$, which means that this composition must also be $(\chi_m)_{(a,\tau)}$.
Therefore, $f^\ast(\chi_k)\circ\eta = \chi_m$ and our second claim has been proven, which finishes the proof.
\end{proof}

\subsection{2-morphisms and homotopical commutivity}

This section is dedicated to showing that a special kind of 2-morphism in $\mathscr{A}_\EuScript{C}$ gives rise to commutivity between homotopy colimits in the homotopy category. We start by recalling some definitions.

Let $\EuScript{M}$ be a category, $I$ be a small category and $D\colon I\to \EuScript{M}$ be a diagram.
The \textit{simplicial replacement} of $D$ is the simplicial object $\text{srep}(D)$ of $\EuScript{M}$ defined by
$$\text{srep}(D)_n = \ds\coprod_{(a_0\leftarrow\dotsm\leftarrow a_n)\in I} D(a_n)$$
where the face map $d_i\colon\text{srep}(D)_n\to \text{srep}(D)_{n-1}$ is induced from the following map on $D(a_n)$ indexed by $(a_0\xleftarrow{\sigma_1}\dotsm\xleftarrow{\sigma_n} a_n)\in I$:
\begin{itemize}
\item for $i=0$, $id\colon D(a_n)\to D(a_n)$ where the codomain is indexed by \\ $(a_1\xleftarrow{\sigma_2} \dotsm \xleftarrow{\sigma_n} a_n)$
\item for $0<i<n$, $id\colon D(a_n)\to D(a_n)$ where the codomain is indexed by \\ $(a_0\xleftarrow{\sigma_1} \dotsm \xleftarrow{\sigma_{i-1}} a_{i-1} \xleftarrow{\sigma_i\sigma_{i+1}}
a_{i+1} \xleftarrow{\sigma_{i+2}} \dotsm \xleftarrow{\sigma_n} a_n)$
\item for $i=n$, $D(\sigma_n)\colon D(a_n)\to D(a_{n-1})$ where the codomain is indexed by \\ $(a_0 \xleftarrow{\sigma_1} \dotsm \xleftarrow{\sigma_{n-1}} a_{n-1})$
\end{itemize}
%
and the degeneracy map $s_i\colon\text{srep}(D)_n\to \text{srep}(D)_{n+1}$ is induced by $id_{D(a_n)}$ where the domain is indexed by $(a_0\xleftarrow{\sigma_1}\dotsm\xleftarrow{\sigma_n} a_n)$ and the codomain is indexed by the chain $(a_0\xleftarrow{\sigma_1} \dotsm \xleftarrow{\sigma_i} a_i \xleftarrow{id} a_i \xleftarrow{\sigma_{i+1}}\dotsm \xleftarrow{\sigma_n} a_n)$.

Additionally suppose that $J$ is a small category and $\alpha\colon J\to I$ is a functor.
Then we can define $\alpha_\#\colon \text{srep}(D\alpha)\to \text{srep}(D)$.
Specifically, $\alpha_\#$ is induced from $id\colon D\alpha(b_n)\to D(\alpha b_n)$ where the domain is indexed by $(b_0\xleftarrow{\chi_1}\dotsm\xleftarrow{\chi_n} b_n)\in J$ and the codomain is indexed by $(\alpha(b_0)\xleftarrow{\alpha(\chi_1)}\dotsm\xleftarrow{\alpha(\chi_n)} \alpha(b_n))\in I$.

Lastly, for any morphism $(\alpha,\eta)\colon(I,D)\to(J,E)$ in $\mathscr{A}_\EuScript{C}$, we get an induced morphism $(\alpha,\eta)_\ast\colon\hocolim{I}{D}\to\hocolim{J}{E}$ given by the composition $|\alpha_\#\circ\eta|$ where $\eta\colon\text{srep}(D)\to\text{srep}(E\alpha)$ is induced in the obvious manner by $(\eta)_i$ for each object $i$ in $I$.
\bigskip

Let $\EuScript{M}$ be a simplicial model category. Suppose that $\theta$ is a 2-morphism in $\mathscr{A}_\EuScript{M}$ from $(\alpha,id)\colon(\EuScript{C},K)\to(\EuScript{D},F)$ to $(\beta,\tau)\colon(\EuScript{C},K)\to(\EuScript{D},F)$ such that $\tau = F\theta$. In particular, this means two things: firstly, we have the following diagram of functors with natural transformation $\theta$
\begin{center}
\begin{tikzcd}[column sep=huge]
\EuScript{C}
  \arrow[bend left=35]{r}[name=U,label=above:$\alpha$]{}
  \arrow[bend right=35]{r}[name=D,label=below:$\beta$]{} &
\EuScript{D}
  \arrow[shorten <=7pt,shorten >=1pt,Rightarrow,to path={(U) -- node[label=right:$\theta$] {} (D)}]{}
  \arrow{r}[above]{F} &
\EuScript{M}
\end{tikzcd}
\end{center}
and secondly, the induced maps $(\alpha,id)_\ast, (\beta,\tau)_\ast\colon\hocolim{\EuScript{C}}{K}\to \hocolim{\EuScript{D}}{F}$ can be written $(\alpha,id)_\ast = |\alpha_\#|$ and $(\beta,\tau)_\ast = |\beta_\#\circ F\theta|$.
The goal of this section is to show that $(\alpha,id)_\ast$ and $(\beta,\tau)_\ast$ commute up to homotopy (using the two particulars mentioned above). To start, we show that $\theta$ gives a ``homotopy'' at the categorical level:

\begin{thm}\label{categorical homotopy}
Let $\EuScript{C}$ and $\EuScript{D}$ be categories, $\EuScript{M}$ be a model category and suppose we have a diagram of functors
\begin{center}
\begin{tikzcd}[column sep=huge]
\EuScript{C}
  \arrow[bend left=35]{r}[name=U,label=above:$\alpha$]{}
  \arrow[bend right=35]{r}[name=D,label=below:$\beta$]{} &
\EuScript{D}
  \arrow[shorten <=7pt,shorten >=1pt,Rightarrow,to path={(U) -- node[label=right:$\theta$] {} (D)}]{}
  \arrow{r}[above]{F} &
\EuScript{M}
\end{tikzcd}
\end{center}
%
where $\theta$ is a natural transformation.
Then there exists a map
\begin{equation}\label{our H}
H\colon (\text{srep}(F\alpha))\times \Delta^1 \rightarrow \text{srep} (F)
\end{equation}
in $s\EuScript{M}$ such that $H_0 = \alpha_\#$ and $H_1 = \beta_\# \circ F\theta$.
\end{thm}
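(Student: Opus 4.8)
The plan is to recognize the natural transformation $\theta$ as the data that welds $\alpha$ and $\beta$ into a single functor, and then to read off $H$ as the simplicial-replacement map attached to one morphism in $\mathscr{A}_{\EuScript{M}}$. Let $[1]$ denote the category with two objects $0,1$ and a single non-identity arrow $0\to 1$, so its nerve is $\Delta^1$. A natural transformation $\theta\colon\alpha\to\beta$ is exactly the same data as a functor $\Theta\colon\EuScript{C}\times[1]\to\EuScript{D}$ with $\Theta(-,0)=\alpha$, $\Theta(-,1)=\beta$ and $\Theta(c,\,0\to 1)=\theta_c$. I would fix this reformulation first.

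Next I would assemble two ingredients. Writing $pr\colon\EuScript{C}\times[1]\to\EuScript{C}$ for the projection, I claim there is a natural isomorphism of simplicial objects
\begin{equation*}
\text{srep}(F\alpha)\times\Delta^1\ \cong\ \text{srep}(F\alpha\circ pr).
\end{equation*}
This is a bookkeeping (``Fubini'') identity: in level $n$ both sides are $\coprod_{(\underline c,\phi)}F\alpha(c_n)$, indexed by a chain $\underline c=(c_0\leftarrow\cdots\leftarrow c_n)$ in $\EuScript{C}$ (with arrows $\sigma_i\colon c_i\to c_{i-1}$) together with $\phi\in(\Delta^1)_n$, because $N_n(\EuScript{C}\times[1])\cong N_n\EuScript{C}\times(\Delta^1)_n$ and the fiber over the last vertex only sees its $\EuScript{C}$-coordinate. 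Second, the naturality of $\theta$ is precisely what makes the assignment $\rho_{(c,0)}=\mathrm{id}_{F\alpha(c)}$, $\rho_{(c,1)}=F\theta_c$ into a natural transformation $\rho\colon F\alpha\circ pr\Rightarrow F\circ\Theta$; hence the pair $(\Theta,\rho)$ is a morphism $(\EuScript{C}\times[1],\,F\alpha\circ pr)\to(\EuScript{D},F)$ in $\mathscr{A}_{\EuScript{M}}$.

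I would then define $H$ to be the composite of maps in $s\EuScript{M}$
\begin{equation*}
\text{srep}(F\alpha)\times\Delta^1\ \xrightarrow{\ \cong\ }\ \text{srep}(F\alpha\circ pr)\ \xrightarrow{\ \rho_\ast\ }\ \text{srep}(F\circ\Theta)\ \xrightarrow{\ \Theta_\#\ }\ \text{srep}(F),
\end{equation*}
where $\rho_\ast$ is the levelwise map induced by the natural transformation $\rho$ and $\Theta_\#$ is the functor-induced map; the last two arrows together are exactly the simplicial-replacement map carried by the $\mathscr{A}_{\EuScript{M}}$-morphism $(\Theta,\rho)$, so $H$ is automatically simplicial with no extra compatibility to verify. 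The endpoint conditions then fall out by restricting along the two vertices $\{0\},\{1\}\hookrightarrow\Delta^1$: under the isomorphism these correspond to the sub-replacements over $\EuScript{C}\times\{0\}$ and $\EuScript{C}\times\{1\}$, on which $(\Theta,\rho)$ restricts to $(\alpha,\mathrm{id})$ and $(\beta,F\theta)$, giving $H_0=\alpha_\#$ and $H_1=\beta_\#\circ F\theta$.

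The main obstacle is the Fubini isomorphism together with pinning down the orientation conventions so that vertex $0$ selects $\alpha$ and vertex $1$ selects $\beta$: one must check, summand by summand, that the faces and degeneracies of $\text{srep}(F\alpha\circ pr)$ agree with those of the tensor $\text{srep}(F\alpha)\times\Delta^1$, the only nonformal point being the top face $d_n$, which applies $F\alpha(\sigma_n)=(F\alpha\circ pr)(\sigma_n,\mathrm{id})$ and must match the reindexing $\phi\mapsto\phi\circ\delta^n$ on the $\Delta^1$-factor. Should one instead prefer to write $H$ out by hand on each summand $(\underline c,\phi)$ — sending $F\alpha(c_n)$ into the summand indexed by the $\Theta$-image chain, with fiber map $\mathrm{id}$ or $F\theta_{c_n}$ according to the value $\phi(n)$ — then the very same content resurfaces as the check that the ``switch'' faces are controlled by the naturality square $\beta(\sigma_k)\circ\theta_{c_k}=\theta_{c_{k-1}}\circ\alpha(\sigma_k)$ of $\theta$.
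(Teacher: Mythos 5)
Your construction is correct and is essentially the paper's own proof: both encode $\theta$ as a functor $\EuScript{C}\times[1]\to\EuScript{D}$ and factor $H$ through $\srep(F\bar\theta)$ followed by $\bar\theta_\#$. The only difference is packaging — where you identify $\srep(F\alpha)\times\Delta^1$ with $\srep(F\alpha\circ pr)$ and map onward via the natural transformation $\rho$, the paper instead exhibits $\srep(F\bar\theta)$ as the pushout of $\srep(F\alpha)\times\Delta^1$ and $\srep(F\beta)\times\{1\}$ along $\srep(F\alpha)\times\{1\}$; the resulting map $H$ is the same.
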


\begin{proof}
Let $I$ be a category with two objects and one nontrivial morphism between them, specifically, the category $[0 \to 1]$. Since $\theta$ is a natural transformation, we get an induced functor $\bar{\theta}\colon \EuScript{C}\times I \to \EuScript{D}$ where $\bar{\theta}(X,0) = \alpha(X)$ and $\bar{\theta} (X,1) = \beta(X)$.

Let $\{1\}$ be the constant simplicial set whose nth level is 1.
Then by inspection, we have the following pushout diagram
\[ \begin{tikzcd}
(\srep F\alpha) \times \{1\} \arrow{r}{i_1} \arrow[swap]{d}{F\theta} & (\srep F\alpha) \times \Delta^1 \arrow{d}{\phi} \\%
(\srep F\beta) \times \{1\} \arrow{r}{j} & \srep F\bar{\theta}
\end{tikzcd}
\]
where $i_1$ is the obvious inclusion map induced from the inclusion $\{1\}\rightarrow \Delta^1$. Notice that $j$ is an inclusion.

By using $\bar{\theta}_\#\colon \srep F\bar{\theta}\to \srep F$, the composition $\bar{\theta}_\# \circ \phi$ is the desired $H$.
\end{proof}

Now we move on to getting a useful cylinder object, which involves some categorical lemmas. We start with some notation.

\begin{defn}\label{new operation}
For an object $Y$, in some category with coproducts $\EuScript{C}$, and a simplicial set $K$, we set
$Y\odot K$ to be the simplicial object of $\EuScript{C}$ whose $n$th level is $(Y\odot K)_n = \coprod_{K_n} Y$ with the obvious morphisms.
\end{defn}

\begin{lemma}\label{real to tensor}
Let $\EuScript{M}$ be a simplicial model category.
If $Y$ is an object of $\EuScript{M}$ and $K$ is a simplicial set, then $|Y\odot K| \cong Y\otimes K$
\end{lemma}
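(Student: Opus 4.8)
The plan is to compute the realization $|Y\odot K|$ as a coend and then pull the tensoring $Y\otimes(-)$ outside, reducing the claim to the density theorem for simplicial sets. Recall that the realization of a simplicial object $Z_\bullet$ in $\EuScript{M}$ is the coend $|Z_\bullet| \cong \int^{[m]\in\Delta} Z_m \otimes \Delta^m$, formed using the tensoring of $\EuScript{M}$ over simplicial sets. Applying this to $Z_\bullet = Y\odot K$, whose $m$th level is $\coprod_{K_m} Y$, gives $|Y\odot K| \cong \int^{[m]\in\Delta}\bigl(\coprod_{K_m}Y\bigr)\otimes\Delta^m$.

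The first key step is to rewrite each level using the structure of the simplicial tensoring. Viewing the set $K_m$ as a discrete simplicial set, we have $\coprod_{K_m}Y \cong Y\otimes K_m$, since $Y\otimes(-)\colon sSet\to\EuScript{M}$ is a left adjoint (its right adjoint is the simplicial mapping space $\mathrm{Map}(Y,-)$) and therefore sends the coproduct of copies of $\Delta^0$ indexed by $K_m$ to the corresponding coproduct of copies of $Y$. Combined with the associativity isomorphism $(Y\otimes L)\otimes L'\cong Y\otimes(L\times L')$ for simplicial sets $L,L'$, this yields $\bigl(\coprod_{K_m}Y\bigr)\otimes\Delta^m \cong Y\otimes(K_m\times\Delta^m)$, where $K_m$ again denotes the discrete simplicial set. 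I would take care to check that this identification is natural in $[m]\in\Delta^{\mathrm{op}}$, i.e.\ compatible with the face and degeneracy maps (induced on both sides from the simplicial structure maps of $K$), so that it is an isomorphism of the two functors $\Delta^{\mathrm{op}}\to\EuScript{M}$ being coended.

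Next I would pull $Y\otimes(-)$ out of the coend. Because $Y\otimes(-)$ is a left adjoint it preserves all colimits, and coends are particular colimits, so $\int^{[m]}Y\otimes(K_m\times\Delta^m)\cong Y\otimes\bigl(\int^{[m]}K_m\times\Delta^m\bigr)$. It then remains to identify the inner coend in $sSet$. This is precisely the content of the density theorem (co-Yoneda lemma): every simplicial set is canonically the colimit of its simplices, $\int^{[m]\in\Delta}K_m\cdot\Delta^m\cong K$, where $K_m\cdot\Delta^m=\coprod_{K_m}\Delta^m$. Substituting gives $|Y\odot K|\cong Y\otimes K$, as desired.

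The routine verifications---the adjointness of $(-)\otimes L$ and $Y\otimes(-)$, the associativity and unit ($Y\otimes\Delta^0\cong Y$) axioms of the tensoring, and the levelwise formula $\coprod_{K_m}Y\cong Y\otimes K_m$---are all part of the simplicial model category structure, and I would invoke them without detailed proof. The only step requiring genuine care is the naturality in the simplicial variable in the second paragraph: one must confirm that the chosen isomorphisms assemble into a natural isomorphism of diagrams $\Delta^{\mathrm{op}}\to\EuScript{M}$ before the coend can be manipulated, and that the density isomorphism is the one compatible with realization. I expect this bookkeeping, rather than any conceptual difficulty, to be the main obstacle.
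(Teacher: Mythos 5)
Your proof is correct, but it takes a genuinely different route from the paper's. The paper never writes down the coend formula for realization: it characterizes both sides by mapping \emph{out} of them, chaining three adjunctions --- $\EuScript{M}(|W|,Z)\cong s\EuScript{M}(W,Z^\Delta)$ for the realization, then the levelwise identification $s\EuScript{M}(Y\odot K,Z^\Delta)\cong sSet(K,\EuScript{M}(Y,Z^\Delta))\cong sSet(K,\underline{\text{Map}}(Y,Z))$ coming from the cotensor, and finally the tensor adjunction $sSet(K,\underline{\text{Map}}(Y,Z))\cong\EuScript{M}(Y\otimes K,Z)$ --- and concludes by the Yoneda lemma. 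Your argument is the covariant counterpart: you compute the coend $\int^{[m]}\bigl(\coprod_{K_m}Y\bigr)\otimes\Delta^m$ directly, rewrite each level as $Y\otimes(K_m\times\Delta^m)$ via the associativity of the tensoring, pull $Y\otimes(-)$ outside the coend because it is a left adjoint, and finish with the density theorem $\int^{[m]}K_m\cdot\Delta^m\cong K$. These are dual packagings of the same underlying structure. What the paper's version buys is that the naturality bookkeeping you correctly flag as the delicate point is absorbed entirely into the adjunction isomorphisms, which are natural by definition, so there is nothing left to check; what your version buys is an explicit construction of the isomorphism rather than one extracted from Yoneda, and it makes visible exactly where the co-Yoneda/density theorem enters. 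Both arguments are complete and rest on the same axioms of the simplicial tensoring.
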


\begin{proof}
Let $Z$ be an object of $\EuScript{M}$. We will show that $\EuScript{M}(|Y\odot K|,Z) \cong \EuScript{M}(Y\otimes K,Z)$ and then by Yoneda's Lemma the result will follow. Let $\Delta$ be the cosimplicial standard simplex. Then
\begin{align*}
\EuScript{M}(|Y\odot K|,Z) &\cong s\EuScript{M}(Y\odot K, Z^\Delta) \\
&\cong sSet(K,\EuScript{M}(Y,Z^\Delta)) \\
&\cong sSet(K,\underline{\text{Map}}(Y,Z)) \\
&\cong \EuScript{M}(Y\otimes K, Z). 
\end{align*}
\end{proof}

\begin{lemma}\label{cyl obj}
Let $\EuScript{M}$ be a simplicial model category with Reedy cofibrant simplicial object $X$. Then $|X\times \Delta^1|$ is a cylinder object for $|X|$, meaning that the folding map $id_{|X|}+id_{|X|}$ factors as $|X|\amalg|X|\to |X\times \Delta^1| \overset{\sim}{\to} |X|$.
\end{lemma}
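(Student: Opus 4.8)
The plan is to reduce the whole statement to the single natural isomorphism
$$|X\times K|\cong |X|\otimes K\qquad (X\in s\EuScript{M},\ K\in sSet),$$
which promotes Lemma \ref{real to tensor} from a single object of $\EuScript{M}$ to an arbitrary simplicial object, and then to read off the cylinder structure by running the maps $\partial\Delta^1\hookrightarrow\Delta^1\to\Delta^0$ through this isomorphism. Here $X\times K$ is the levelwise copower, $(X\times K)_n=\coprod_{K_n}X_n$, so that $X\times\Delta^0=X$ and $X\times(\Delta^0\amalg\Delta^0)=X\amalg X$.

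First I would establish the isomorphism. Both $(X,K)\mapsto|X\times K|$ and $(X,K)\mapsto|X|\otimes K$ are functors $s\EuScript{M}\times sSet\to\EuScript{M}$ that preserve colimits in each variable separately, since realization, the levelwise copower $-\times K$, the copower $X\times-$, and the tensorings $-\otimes K$ and $|X|\otimes-$ are all left adjoints. Hence it suffices to produce a natural isomorphism on generators, i.e.\ for $X=A\odot\Delta^m$ (with $A$ an object of $\EuScript{M}$) and $K=\Delta^p$, because every simplicial object is canonically a colimit of objects $A\odot\Delta^m$ (via $X\cong\int^{[m]}X_m\odot\Delta^m$) and every simplicial set is a colimit of representables $\Delta^p$. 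On generators one checks levelwise that $(A\odot\Delta^m)\times\Delta^p=A\odot(\Delta^m\times\Delta^p)$, so Lemma \ref{real to tensor} gives $|(A\odot\Delta^m)\times\Delta^p|\cong A\otimes(\Delta^m\times\Delta^p)$, while the other side is $|A\odot\Delta^m|\otimes\Delta^p\cong(A\otimes\Delta^m)\otimes\Delta^p\cong A\otimes(\Delta^m\times\Delta^p)$, using Lemma \ref{real to tensor} again together with the associativity $(A\otimes\Delta^m)\otimes\Delta^p\cong A\otimes(\Delta^m\times\Delta^p)$ of the $sSet$-module structure. All the isomorphisms in sight are natural, so they extend to the claimed isomorphism.

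Second, I would apply $|-|$ and the isomorphism to $\partial\Delta^1=\Delta^0\amalg\Delta^0\hookrightarrow\Delta^1\to\Delta^0$. Since realization preserves coproducts and $X\times\Delta^0=X$, this yields
$$|X|\amalg|X|=|X\times\partial\Delta^1|\to|X\times\Delta^1|\to|X\times\Delta^0|=|X|,$$
which the isomorphism identifies with $|X|\otimes\partial\Delta^1\to|X|\otimes\Delta^1\to|X|\otimes\Delta^0$. As $\partial\Delta^1\hookrightarrow\Delta^1\to\Delta^0$ composes to the codiagonal $\partial\Delta^1\to\Delta^0$, the composite above is exactly the folding map $id_{|X|}+id_{|X|}$. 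For the weak equivalence I would invoke the Reedy cofibrancy hypothesis: since $X$ is Reedy cofibrant, its realization $|X|$ is cofibrant (realization is left Quillen from the Reedy model structure on $s\EuScript{M}$). With $|X|$ cofibrant, the pushout–product (SM7) axiom of the simplicial model structure makes $|X|\otimes-$ send the weak equivalence $\Delta^1\xrightarrow{\sim}\Delta^0$ of cofibrant simplicial sets to the weak equivalence $|X|\otimes\Delta^1\xrightarrow{\sim}|X|\otimes\Delta^0=|X|$, and makes $|X|\otimes\partial\Delta^1\to|X|\otimes\Delta^1$ a cofibration, so the factorization is a genuine cylinder object.

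The main obstacle is the first step, the isomorphism $|X\times K|\cong|X|\otimes K$: the computation on generators is routine, but I expect the care to lie in verifying enough naturality that the isomorphism is compatible with the maps induced by $\partial\Delta^1\hookrightarrow\Delta^1\to\Delta^0$, since it is precisely this compatibility that legitimizes identifying the composite in Step 2 with the fold. Once the isomorphism is in hand, the cylinder object property is a formal consequence of SM7 and the cofibrancy of $|X|$ supplied by Reedy cofibrancy.
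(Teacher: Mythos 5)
Your proof is correct, and its second half (reading off the fold map from $\partial\Delta^1\hookrightarrow\Delta^1\to\Delta^0$, and deducing the weak equivalence from cofibrancy of $|X|$ together with SM7 applied to $\Delta^1\xrightarrow{\sim}\Delta^0$) is essentially the paper's argument. Where you genuinely diverge is in how the key identification $|X\times\Delta^1|\cong|X|\otimes\Delta^1$ is obtained. The paper argues concretely: it forms the bisimplicial object $K$ with $K_{n,m}=\coprod_{\Delta^1_n}X_m$, whose diagonal is $X\times\Delta^1$, invokes Quillen's lemma $|\mathrm{diag}(K)|\cong||K|_{horiz}|_{vert}$, observes that $|K|_{horiz}=|X|\odot\Delta^1$, and then applies Lemma \ref{real to tensor} once to the object $|X|$ of $\EuScript{M}$. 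You instead prove the stronger natural isomorphism $|X\times K|\cong|X|\otimes K$ for arbitrary $K$ by cocontinuity in each variable and a check on the generators $A\odot\Delta^m$ and $\Delta^p$, where Lemma \ref{real to tensor} again does the work. Both are sound. Your route buys generality and makes the naturality needed to identify the composite with the fold map completely transparent; its cost is the density/Kan-extension bookkeeping --- in particular you should construct the comparison map canonically (e.g.\ from the coend formulas for $|{-}|$ and $\otimes$) before verifying it is an isomorphism on generators, since ``all the isomorphisms in sight are natural, so they extend'' is the one compressed step. The paper's route is shorter and needs no density argument, but leans on the exchange-of-realizations lemma for bisimplicial objects, which your version avoids entirely.
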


\begin{proof}
To complete this proof, we need to show two things: $|X\times\Delta^1|$ factors the map $|id|+|id|\colon|X|\coprod|X|\to|X|$ and $|X\times\Delta^1|\simeq|X|$.

First, notice that $id+ id\colon X\coprod X\to X$ factors through $X\times\Delta^1$ in the obvious way. Then, since realization is a left adjoint and hence preserves colimits, the composite
$$|X|\amalg|X| \cong |X\amalg X| \to |X\times\Delta^1| \to |X|$$
is $|id|+|id|$. Thus showing the first condition.

Second, we will look at $|X\times\Delta^1|$. Let $K$ be the bisimplicial object with level $K_{n,m} = \coprod_{\Delta^1_n} X_m$. Notice that $X\times\Delta^1 = \text{diag}(K)$. Thus
$$|X\times\Delta^1| = |\text{diag}(K)| \cong ||K|_{horiz}|_{vert}$$
where the last isomorphism comes from \cite[][Lemma on page 94]{quillen1973higher}.
Furthermore, $|K|_{horiz} = |X|\odot \Delta^1$ and hence
$$|X\times\Delta^1| = ||X|\odot\Delta^1| \cong |X|\otimes \Delta^1$$
by Lemma \ref{real to tensor}. Since $\Delta^1\to\Delta^0$ is a weak equivalence and $|X|$ is cofibrant by \cite[][Proposition 3.6]{goerss2009simplicial}, then 
$$|X|\otimes\Delta^1 \simeq |X|\otimes\Delta^0 = |X|$$
which completes the proof.
\end{proof}
\bigskip

Now we can return to our ``categorical homotopy'' (\ref{our H}). 
We will use Lemma \ref{cyl obj} to prove the following theorem, which will shows that our ``categorical homotopy'' induces a weak equivalence after geometric realization.

\begin{thm}\label{hpty in cat implies hpty in realization}
Let $\EuScript{M}$ be a simplicial model category. If $X$ and $Y$ are simplicial objects in $\EuScript{M}$, $X$ is Reedy cofibrant and there is a morphism $H\colon X\times \Delta^1 \to Y$, then $|H_0|,|H_1|\colon|X|\to|Y|$ are equal in the homotopy category of $\EuScript{M}$.
\end{thm}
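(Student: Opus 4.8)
The plan is to recognize the statement as an instance of the general fact that left-homotopic maps agree in the homotopy category, with the required cylinder object supplied by Lemma \ref{cyl obj}. First I would unwind the two maps in question. Writing $i_0,i_1\colon X\to X\times\Delta^1$ for the maps of simplicial objects induced by the two vertices $\{0\},\{1\}\colon\Delta^0\to\Delta^1$, we have $H_0 = H\circ i_0$ and $H_1 = H\circ i_1$. Since geometric realization is a functor, this gives $|H_0| = |H|\circ|i_0|$ and $|H_1| = |H|\circ|i_1|$, where $|H|\colon|X\times\Delta^1|\to|Y|$.

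Next I would invoke Lemma \ref{cyl obj}. Because $X$ is Reedy cofibrant, $|X\times\Delta^1|$ is a cylinder object for $|X|$, so the folding map factors as $|X|\amalg|X|\xrightarrow{|i_0|+|i_1|}|X\times\Delta^1|\xrightarrow{p}|X|$ with $p$ a weak equivalence. Here I must check that the two summand inclusions of this cylinder really are $|i_0|$ and $|i_1|$; this follows from the ``obvious'' factorization of $id+id$ through $X\times\Delta^1$ used in the proof of Lemma \ref{cyl obj}, together with the fact that realization preserves coproducts. Restricting the factored folding map to each summand yields $p\circ|i_0| = id_{|X|} = p\circ|i_1|$.

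Finally, the conclusion is a formal consequence of localization. Let $\gamma\colon\EuScript{M}\to\mathrm{Ho}(\EuScript{M})$ be the canonical functor, which inverts weak equivalences. Since $p$ is a weak equivalence, $\gamma(p)$ is an isomorphism, and applying $\gamma$ to $p\circ|i_0| = id = p\circ|i_1|$ and cancelling $\gamma(p)$ gives $\gamma(|i_0|) = \gamma(|i_1|)$. Therefore
$\gamma(|H_0|) = \gamma(|H|)\gamma(|i_0|) = \gamma(|H|)\gamma(|i_1|) = \gamma(|H_1|)$,
i.e. $|H_0|$ and $|H_1|$ are equal in the homotopy category.

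The only real obstacle is the bookkeeping in the middle step: confirming that the endpoint inclusions of the cylinder object produced by Lemma \ref{cyl obj} are precisely the realizations of the vertex-induced maps $i_0$ and $i_1$. I would also emphasize that no fibrancy hypothesis on $Y$ is needed, since I use only one direction of the homotopy relation (left-homotopic implies equal in $\mathrm{Ho}(\EuScript{M})$), which rests purely on $\gamma$ being a functor that inverts weak equivalences rather than on the full theory of homotopies in a model category.
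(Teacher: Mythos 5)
Your proof is correct and follows essentially the same route as the paper's: factor $H_0$ and $H_1$ through $H$ via the two endpoint inclusions of $X\times\Delta^1$, invoke Lemma \ref{cyl obj} to recognize $|X\times\Delta^1|$ as a cylinder object for $|X|$, and conclude that $|H_0|$ and $|H_1|$ are left homotopic and hence equal in the homotopy category. The only difference is cosmetic: you spell out the final step (cancelling $\gamma(p)$ in the localization) that the paper leaves as the standard fact that left-homotopic maps agree in $\mathrm{Ho}(\EuScript{M})$.
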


\begin{proof}
We will show that $|H_0|$ and $|H_1|$ are left homotopic, which implies that they are equal in the homotopy category of $\EuScript{M}$. Let $\{i\}$ be the constant simplicial object whose nth level is $i$. For $i=0,1$, $H_i$ is the composition
$$X\cong X\times\{i\}\hookrightarrow X\times\Delta^1 \overset{H}{\longrightarrow}\ Y.$$
Thus $|H_i|$ factors through $|H|$ for $i=0,1$. Hence $|H_0|+|H_1|\colon|X|\coprod|X|\to|Y|$ factors through $|H|$. 
Since $|X\times\Delta^1|$ is a cylinder object for $|X|$ (by Lemma \ref{cyl obj}), then the factorization of $|H_0|+|H_1|$ through $|H|$ means that 
$|H_0|+|H_1|$ extends to a map $|X\times\Delta^1|\to |Y|$, i.e.\, $|H_0|$ and $|H_1|$ are left homotopic.
\end{proof}

Finally, we have the desired result of the section:

\begin{cor}\label{homotopical commutivity}
Let $\theta$ be a 2-morphism in $\mathscr{A}_\EuScript{M}$ from $(\alpha,id)\colon(\EuScript{C},K)\to(\EuScript{D},F)$ to $(\beta,\tau)\colon(\EuScript{C},K)\to(\EuScript{D},F)$ such that $\tau = F\theta$. If $\srep(F\alpha)$ is Reedy cofibrant, then the pair $(\alpha,id)_\ast, (\beta,\tau)_\ast\colon\hocolim{\EuScript{C}}{K}\to \hocolim{\EuScript{D}}{F}$ commute up to homotopy.
\end{cor}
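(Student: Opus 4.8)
The plan is to reduce the statement directly to the two theorems just proved, so that the real work is bookkeeping about what the induced maps $(\alpha,id)_\ast$ and $(\beta,\tau)_\ast$ actually are. First I would unwind the hypotheses. Because the source morphism $(\alpha,id)\colon(\EuScript{C},K)\to(\EuScript{D},F)$ carries the \emph{identity} natural transformation, the definition of a morphism in $\mathscr{A}_\EuScript{M}$ forces $K = F\alpha$; hence $\hocolim{\EuScript{C}}{K} = |\srep(F\alpha)|$ and $\hocolim{\EuScript{D}}{F} = |\srep(F)|$. Unwinding the definition of the induced morphism on homotopy colimits (and substituting $\tau = F\theta$) then shows that both maps in question are realizations of simplicial maps $\srep(F\alpha)\to\srep(F)$, namely $(\alpha,id)_\ast = |\alpha_\#|$ and $(\beta,\tau)_\ast = |\beta_\#\circ F\theta|$, exactly as recorded in the discussion preceding Theorem \ref{categorical homotopy}.

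Next I would feed the 2-morphism data into Theorem \ref{categorical homotopy}. The 2-morphism $\theta$ supplies precisely the diagram $\EuScript{C}\rightrightarrows\EuScript{D}\xrightarrow{F}\EuScript{M}$ together with a natural transformation $\theta\colon\alpha\to\beta$, which is what that theorem requires; it therefore yields a map $H\colon\srep(F\alpha)\times\Delta^1\to\srep(F)$ in $s\EuScript{M}$ with endpoints $H_0 = \alpha_\#$ and $H_1 = \beta_\#\circ F\theta$.

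Finally, since $\srep(F\alpha)$ is Reedy cofibrant by hypothesis and $\srep(F)$ is a simplicial object of $\EuScript{M}$, I would apply Theorem \ref{hpty in cat implies hpty in realization} with $X=\srep(F\alpha)$, $Y=\srep(F)$, and this $H$. That theorem gives $|H_0| = |H_1|$ in the homotopy category of $\EuScript{M}$. Combining with the identifications of the first paragraph, $|H_0| = |\alpha_\#| = (\alpha,id)_\ast$ and $|H_1| = |\beta_\#\circ F\theta| = (\beta,\tau)_\ast$, so the two induced maps agree in the homotopy category; that is, they commute up to homotopy.

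I do not anticipate a substantive obstacle, since every analytic step is already isolated in Theorems \ref{categorical homotopy} and \ref{hpty in cat implies hpty in realization}; the only care needed is in the first paragraph. Concretely, I would verify that $K = F\alpha$ really follows from the identity natural transformation, that the hypothesis $\tau = F\theta$ is exactly the coherence triangle in the definition of a 2-morphism (with $\eta_f = id$ it reduces to $\tau_i = F\theta_i$), and that the realized endpoints $|H_0|,|H_1|$ genuinely coincide with the two maps named in the statement rather than with some reindexed variants.
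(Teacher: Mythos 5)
Your proposal is correct and follows exactly the paper's own argument: apply Theorem \ref{categorical homotopy} to produce the homotopy $H$ with endpoints $\alpha_\#$ and $\beta_\#\circ F\theta$, then invoke Theorem \ref{hpty in cat implies hpty in realization} using the Reedy cofibrancy of $\srep(F\alpha)$ to conclude $|H_0|=|H_1|$ in the homotopy category. The extra bookkeeping you supply in the first paragraph (checking $K=F\alpha$ and identifying the induced maps with $|\alpha_\#|$ and $|\beta_\#\circ F\theta|$) is exactly the content the paper records in the discussion preceding Theorem \ref{categorical homotopy}.
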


\begin{proof}
This is an immediate consequence of Theorem \ref{categorical homotopy} and Theorem \ref{hpty in cat implies hpty in realization}. Specifically, Theorem \ref{categorical homotopy} gives us the necessary morphism $H$, i.e.\ (\ref{our H}), so that we can apply Theorem \ref{hpty in cat implies hpty in realization}. Notice that $|H_0| = |\alpha_\#| = (\alpha,id)_\ast$ and that $|H_1| = |\beta_\#\circ F\theta| = (\beta,\tau)_\ast$.
\end{proof}

\section{Generalized Sieves}\label{section gen sieves}

In this section we define and discuss a particular generalization for a sieve; this will be a key tool in the proofs of Theorems \ref{ucs is a top} and \ref{uhcs is a top} (where we show that certain collections form Grothendieck topologies).
Additionally, we define two special functors. 



\begin{defn}
Fix a positive integer $n$. Let $T_1,\ T_2,\dots,\ T_n$ be sieves on $X$.
A \textit{generalized sieve}, denoted by 
$\prescript{}{X}{[T_1T_2\dots T_n]}$, 
is the following category:
\begin{itemize}

\item objects $(\rho_1,\ \rho_2,\dots,\ \rho_n)$ are $n$-tuples of arrows in $\EuScript{C}$ such that the composition $\rho_1\circ\rho_2\circ\dots\circ\rho_i \in T_i$ 
for all $i=1,\dots,n$.
Pictorially we can visualize this as
\begin{center}
\begin{tikzcd}
X &
A_1 \arrow{l}[above]{\rho_1}[below]{\in T_1} &
A_2 \arrow{l}[above]{\rho_2} \arrow[bend right=30]{ll}[above]{\in T_2} &
\dots\arrow{l}[above]{\rho_3} &
A_n \arrow{l}[above]{\rho_n} \arrow[bend left=30]{llll}[below]{\in T_n}.
\end{tikzcd}
\end{center}

\item morphisms $(f_1,\ f_2,\dots,\ f_n)$ from $(\rho_1,\ \rho_2,\dots,\ \rho_n)$ to $(\tau_1,\ \tau_2,\dots,\ \tau_n)$ are $n$-tuples of arrows in $\EuScript{C}$ where $f_i\colon \text{domain}(\rho_i) \to \text{domain}(\tau_i)$ such that all squares in the following diagram commute
\begin{center}
\begin{tikzcd}
X \arrow{d}[left]{id} & A_1 \arrow{l}[above]{\rho_1} \arrow{d}[left]{f_1} & A_2 \arrow {l}[above]{\rho_2} \arrow{d}[left]{f_2} & \dots \arrow{l}[above]{\rho_3} & A_n \arrow{l}[above]{\rho_n} \arrow{d}[left]{f_n} \\
X & B_1 \arrow{l}[below]{\tau_1} & B_2 \arrow{l}[below]{\tau_2} &\dots \arrow{l}[below]{\tau_3} & B_n \arrow{l}[below]{\tau_n}.
\end{tikzcd}
\end{center}
\end{itemize}
\end{defn}

\noindent For example, if $T$ is a sieve on $X$, then $\prescript{}{X}{[T]}$ is $T$ (as categories). 

\begin{rmk}\label{groth construct}
For sieves $T_1,\dots,T_n$ on $X$ we can define a functor
$$G\colon\prescript{}{X}{[T_1T_2\dots T_{n-1}]}\to Cat,\quad (\rho_1,\dots,\rho_{n-1})\mapsto (\rho_1\circ\dots\circ\rho_{n-1})^\ast T_n.$$
Then the Grothendieck construction for $G$ is $\prescript{}{X}{[T_1T_2\dots T_n]}$.
Indeed, this is easy to see once we view the objects of $\prescript{}{X}{[T_1T_2\dots T_n]}$ as pairs $$((\rho_1,\dots,\rho_{n-1})\in\prescript{}{X}{[T_1\dots T_{n-1}]}, \tau\in G(\rho_1,\dots,\rho_{n-1})).$$

\end{rmk}

Like a sieve, a generalized sieve $\prescript{}{X}{[T_1\dots T_n]}$ can be viewed as a subcategory of $(\EuScript{C}\downarrow X)$.
Thus we will use $U$ (see Notation \ref{forgetful functor}) as the functor $\prescript{}{X}{[T_1T_2\dots T_n]} \to \EuScript{C}$ given by $(\rho_1, \rho_2, \dots, \rho_n) \mapsto \text{domain}\ \rho_n$.
Note: for any morphism $(f_1,\ f_2,\dots,\ f_n)$, $U(f_1,\ f_2,\dots,\ f_n) = f_n$.

\begin{defn}
Let $T_1,\ T_2,\dots,\ T_n$ be sieves on $X$ (with $n\geq 2$), we define a `forgetful functor'
$$\mathscr{F}\colon \prescript{}{X}{[T_1T_2\dots T_n]} \to \prescript{}{X}{[T_1T_2\dots T_{n-1}]},\quad (\rho_1, \rho_2, \dots, \rho_n) \mapsto (\rho_1, \rho_2, \dots, \rho_{n-1}).$$
Pictorially,
\begin{center}
\begin{tikzcd}
X &
A_1 \arrow{l}[above]{\rho_1}&
A_2 \arrow{l}[above]{\rho_2} & 
\dots\arrow{l}[above]{\rho_3} &
A_{n-1} \arrow{l}[above]{\rho_{n-1}} &
A_n \arrow{l}[above]{\rho_n} \\ 
{} &
{} &
X \arrow[Mapsfrom]{l}[above]{\mathscr{F}} &
A_1 \arrow{l}[above]{\rho_1}&
A_2 \arrow{l}[above]{\rho_2} &
\dots\arrow{l}[above]{\rho_3} &
A_{n-1} \arrow{l}[above]{\rho_{n-1}}. 
\end{tikzcd}
\end{center}
\end{defn}

\begin{rmk}
Actually, the above definition only needs $n\geq 1$.
In the $n=1$ case, our forgetful functor is $\mathscr{F}\colon \prescript{}{X}{[T_1]}\to \prescript{}{X}{[\,]}$, where $\prescript{}{X}{[\,]}$ is the category with unique object $(id_X\colon X\to X)$ and no non-identity morphisms, and is defined by $\rho\mapsto id_X$.
\end{rmk}

Now we take this functor $\mathscr{F}$ and use it to make an arrow in $\mathscr{A}_\EuScript{C}$:

\begin{defn}
For any sieves $T_1,\ T_2,\dots,\ T_n$ on $X$ (with $n\geq 2$), define a map in $\mathscr{A}_\EuScript{C}$ called
$\widetilde{\mathscr{F}}\colon \left( \prescript{}{X}{[T_1T_2\dots T_n]}, U\right) \to \left( \prescript{}{X}{[T_1T_2\dots T_{n-1}]}, U\right)$
by
$\widetilde{\mathscr{F}} = (\mathscr{F}, \eta_{\mathscr{F}})$
where
$\eta_{\mathscr{F}}\colon U\to (U\circ\mathscr{F})$
is given by
$(\eta_{\mathscr{F}})_{(\rho_1, \rho_2, \dots, \rho_n)} = \rho_n$.
\end{defn}

The fact that $ \eta_{\mathscr{F}}$ is a natural transformation can be seen easily from the pictorial view of morphisms.
Specifically, consider the morphism $(f_1, f_2, \dots, f_n)$; this morphism gives us a commutative diagram
\begin{center}
\begin{tikzcd}
X \arrow{d}[left]{id} &
A_1 \arrow{l}[above]{\rho_1} \arrow{d}[left]{f_1} &
A_2 \arrow {l}[above]{\rho_2} \arrow{d}[left]{f_2} &
\dots \arrow{l}[above]{\rho_3} &
A_{n-1} \arrow{l}[above]{\rho_{n-1}} \arrow{d}[left]{f_{n-1}} &
A_n \arrow{l}[above]{\rho_n} \arrow{d}[left]{f_n} \\
X &
B_1 \arrow{l}[below]{\tau_1} &
B_2 \arrow{l}[below]{\tau_2} &
\dots \arrow{l}[below]{\tau_3} &
B_{n-1} \arrow{l}[above]{\tau_{n-1}} &
B_n \arrow{l}[below]{\tau_n}
\end{tikzcd}
\end{center}
but the rightmost commutative square of the above diagram can be relabelled to give us the following commutative diagram
\begin{center}
\begin{tikzcd}
U\circ\mathscr{F}(\rho_1, \rho_2, \dots, \rho_n) \arrow{d}[left]{U\circ\mathscr{F}(f_1,f_2,\dots,f_n)} &
U(\rho_1, \rho_2, \dots, \rho_n) \arrow{l}[above]{\eta_{\mathscr{F}}} \arrow{d}[left]{U(f_1,f_2,\dots,f_n)}\\
U\circ\mathscr{F}(\tau_1, \tau_2, \dots, \tau_n)&
U(\tau_1, \tau_2, \dots, \tau_n) \arrow{l}[below]{\eta_{\mathscr{F}}}
\end{tikzcd}
\end{center}
and it is this diagram that shows $\eta_{\mathscr{F}}$ is a natural transformation.

\begin{defn}
Let $T_1,\ T_2,\dots,\ T_n$ be sieves on $X$ (with $n\geq 2$), we define a `composition functor'
$$\mu\colon \prescript{}{X}{[T_1T_2\dots T_n]} \to \prescript{}{X}{[T_2\dots T_{n}]}, \quad (\rho_1, \rho_2, \dots, \rho_n) \mapsto (\rho_1\circ\rho_2, \rho_3, \dots, \rho_{n}).$$
Pictorially,
\begin{center}
\begin{tikzcd}
X &
A_1 \arrow{l}[above]{\rho_1}&
A_2 \arrow{l}[above]{\rho_2} &
\dots\arrow{l}[above]{\rho_3} &
A_{n-1} \arrow{l}[above]{\rho_{n-1}} &
A_n \arrow{l}[above]{\rho_n} \\
{} &
{} &
X \arrow[Mapsfrom]{l}[above]{\mu} &
A_2 \arrow{l}[above]{\rho_1\circ\rho_2}&
A_3 \arrow{l}[above]{\rho_3} &
\dots\arrow{l}[above]{\rho_4} &
A_{n} \arrow{l}[above]{\rho_{n}}. 
\end{tikzcd}
\end{center}
\end{defn}

Now we take this functor $\mu$ and use it to make an arrow in $\mathscr{A}_\EuScript{C}$:

\begin{defn}
For any sieves $T_1,\ T_2,\dots,\ T_n$ on $X$ (with $n\geq 2$).
Define
$\widetilde{\mu}\colon \left( \prescript{}{X}{[T_1T_2\dots T_n]}, U\right) \to \left( \prescript{}{X}{[T_2T_3\dots T_{n}]}, U\right)$
by
$\widetilde{\mu} = (\mu, \eta_{\mu})$
where the natural transformation 
$\eta_{\mu}\colon U\to (U\circ\mu)$
is given by
$(\eta_{\mu})_{(\rho_1, \rho_2, \dots, \rho_n)} = id_{\text{domain}\ \rho_n}$.
\end{defn}



Lastly, we include an two results. 

\begin{cor}\label{forgetful maps induce isom}
Let $V$ and $W$ be sieves on $X$ such that for all $f\in V$, $f^\ast W$ is a colim sieve. Fix an integer $n\geq 0$ and let $T_1,\ T_2,\ \dots,\ T_n$ be a list of sieves on $X$ (note: $n=0$ corresponds to the empty list). Then the induced map $\widetilde{\mathscr{F}}^\ast\colon  \mathscr{A}_\EuScript{C} (\prescript{}{X}{[T_1T_2\dots T_nV]}, cY) \to \mathscr{A}_\EuScript{C} (\prescript{}{X}{[T_1T_2\dots T_nVW]}, cY)$ is a bijection for all objects $Y$ of $\EuScript{C}$.
\end{cor}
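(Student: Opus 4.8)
The plan is to recognize this statement as a direct application of Proposition \ref{technical lemma}, where the Grothendieck-construction identification is exactly the one supplied by Remark \ref{groth construct}. Concretely, I would apply Remark \ref{groth construct} to the list of sieves $T_1,\dots,T_n,V,W$, singling out the final sieve $W$: this yields a functor $G\colon \prescript{}{X}{[T_1\dots T_nV]}\to Cat$ sending $(\rho_1,\dots,\rho_{n+1})\mapsto (\rho_1\circ\dots\circ\rho_{n+1})^\ast W$, whose Grothendieck construction is precisely $\prescript{}{X}{[T_1\dots T_nVW]}$. Accordingly I set $A = \prescript{}{X}{[T_1\dots T_nV]}$, take $\mathcal{C}=\EuScript{C}$, let $\theta=\sigma=U$ be the forgetful functors into $\EuScript{C}$, and take $F=\widetilde{\mathscr{F}}=(\mathscr{F},\eta_{\mathscr{F}})$. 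On objects $\mathscr{F}(\rho_1,\dots,\rho_{n+2})=(\rho_1,\dots,\rho_{n+1})$, which matches the required form $f(a,\tau)=a$ with $a=(\rho_1,\dots,\rho_{n+1})$ and $\tau=\rho_{n+2}$.

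The only real content is verifying the colimit hypothesis of Proposition \ref{technical lemma}: that for each object $a=(\rho_1,\dots,\rho_{n+1})$ of $A$, the object $\theta(a)$ is the colimit of $\sigma(a,-)\colon G(a)\to\EuScript{C}$ with the isomorphism induced by $\eta_{\mathscr{F}}$. I would unwind the definitions. Writing $f=\rho_1\circ\dots\circ\rho_{n+1}$, the fiber $G(a)$ is the sieve $f^\ast W$ on $\text{domain}\,\rho_{n+1}=\text{domain}\,f$, and the restriction $\sigma(a,-)$ is exactly the forgetful functor $U\colon f^\ast W\to\EuScript{C}$. Meanwhile $\theta(a)=\text{domain}\,f$, and the component $(\eta_{\mathscr{F}})_{(\rho_1,\dots,\rho_{n+1},\tau)}=\tau$ exhibits the canonical cocone from $U$ on $f^\ast W$ down to $\text{domain}\,f$. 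Hence the condition demanded by Proposition \ref{technical lemma} says precisely that $f^\ast W$ is a colim sieve on $\text{domain}\,f$.

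This is where the hypothesis of the Corollary enters. By the definition of the generalized sieve $\prescript{}{X}{[T_1\dots T_nV]}$, the composite $f=\rho_1\circ\dots\circ\rho_{n+1}$ lies in $V$; and our standing assumption guarantees that $f^\ast W$ is a colim sieve for every $f\in V$. Thus the colimit hypothesis of Proposition \ref{technical lemma} holds for every object $a$ of $A$, and the proposition immediately delivers that $\widetilde{\mathscr{F}}^\ast\colon \mathscr{A}_\EuScript{C}(\prescript{}{X}{[T_1\dots T_nV]},cY)\to\mathscr{A}_\EuScript{C}(\prescript{}{X}{[T_1\dots T_nVW]},cY)$ is a bijection for all $Y$.

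I expect the main (though still essentially bookkeeping) obstacle to be matching the data precisely: confirming that the fiber functor $\sigma(a,-)$ really is the forgetful functor on $f^\ast W$, and that the cocone induced by $\eta_{\mathscr{F}}$ is the canonical one, so that ``$f^\ast W$ is a colim sieve'' and ``$\theta(a)$ is the induced colimit'' become literally the same assertion. The extreme case $n=0$ needs no separate treatment, since the list $VW$ still has length two and $\widetilde{\mathscr{F}}$ is defined there.
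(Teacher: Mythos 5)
Your proposal is correct and is exactly the paper's argument: the paper's proof simply states that the corollary is an immediate application of Proposition \ref{technical lemma} together with Remark \ref{groth construct}, which is precisely the identification you carry out. Your verification that the fiber functor is the forgetful functor on $f^\ast W$ and that the cocone induced by $\eta_{\mathscr{F}}$ is the canonical one correctly fills in the bookkeeping the paper leaves implicit.
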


\begin{proof}
This is an immediate application of Proposition \ref{technical lemma} and Remark \ref{groth construct}.

\end{proof}

\begin{lemma}\label{vert maps are we in diagram}
Let $n\geq 1$ and $T_1,\dots,T_n$ be sieves on $X$ such that for all $f\in T_{n-1}$, $f^\ast T_{n}$ is a universal hocolim sieve.
Then the induced map
$$\mathscr{F}_\ast\colon \hocolim{\prescript{}{X}{[T_1\dots T_n]}}{U} \to \hocolim{\prescript{}{X}{[T_1\dots T_{n-1}]}}{U}$$
%
is a weak equivalence. 
Note: when $n=1$, then $T_{n-1} = \{id_X\colon X\to X\}$ and
$\prescript{}{X}{[T_1\dots T_{n-1}]} = \prescript{}{X}{[\ ]}$.
\end{lemma}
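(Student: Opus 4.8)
The plan is to reduce the statement to a Fubini-type decomposition of the homotopy colimit along the Grothendieck construction of Remark \ref{groth construct}, after which the hypothesis supplies exactly the fiberwise weak equivalences needed. Write $A = \prescript{}{X}{[T_1\dots T_{n-1}]}$ and let $G\colon A\to Cat$ be the functor $(\rho_1,\dots,\rho_{n-1})\mapsto g^\ast T_n$, where $g := \rho_1\circ\dots\circ\rho_{n-1}$, so that $\text{Gr}(G) = \prescript{}{X}{[T_1\dots T_n]}$ and $\mathscr{F}$ is precisely the projection $(a,\tau)\mapsto a$ onto the base. For a fixed object $a = (\rho_1,\dots,\rho_{n-1})$ the restriction of $U$ to the fiber $G(a) = g^\ast T_n$ is just the forgetful functor $U\colon g^\ast T_n\to\EuScript{M}$, and its canonical cocone map lands in $\text{domain}(g) = \text{domain}(\rho_{n-1}) = U(a)$.

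First I would establish a Fubini identification
$$\hocolim{\prescript{}{X}{[T_1\dots T_n]}}{U}\ \simeq\ \hocolim{a\in A}\Bigl(\hocolim{g^\ast T_n}{U}\Bigr),$$
together with the fact that under it $\mathscr{F}_\ast$ corresponds to the map of outer homotopy colimits induced by the canonical fiber cocone maps $c_a\colon \hocolim{g^\ast T_n}{U}\to U(a)$. Since $g = \rho_1\circ\dots\circ\rho_{n-1}\in T_{n-1}$ for every object $a$ of $A$ (by the definition of the generalized sieve), the hypothesis says each $g^\ast T_n$ is a universal hocolim sieve, hence in particular a hocolim sieve, so every $c_a$ is a weak equivalence. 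As the homotopy colimit functor over $A$ carries an objectwise weak equivalence of $A$-diagrams to a weak equivalence, the induced map $\hocolim{a\in A}(c_a)$ is a weak equivalence, and composing with the Fubini identification yields that $\mathscr{F}_\ast$ is a weak equivalence. The base case $n=1$ is the degenerate instance in which $A = \prescript{}{X}{[\ ]}$ is terminal on $id_X$: the statement collapses to the assertion that the canonical map $\hocolim{T_1}{U}\to X$ is a weak equivalence, which is exactly the hypothesis that $T_1 = id_X^\ast T_1$ is a hocolim sieve.

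The hard part is the Fubini step, and specifically checking that the identification intertwines $\mathscr{F}_\ast$ with the map induced by the $c_a$. I would obtain it by realizing $\text{srep}(U)$ over $\text{Gr}(G)$ as the diagonal of a bisimplicial object whose vertical realization computes, levelwise over the nerve of $A$, the fiber homotopy colimits $\hocolim{g^\ast T_n}{U}$, and then applying the diagonal-versus-iterated-realization identity used already in the proof of Lemma \ref{cyl obj} (the Lemma on page 94 of \cite{quillen1973higher}); tracing how $\mathscr{F}_\#$ and $\eta_{\mathscr{F}}$ act on these simplicial replacements is what produces the comparison with the $c_a$. A secondary point requiring care is cofibrancy: for the homotopy colimit over $A$ to detect objectwise weak equivalences (and for the bisimplicial identity to compute genuine homotopy colimits) one needs the relevant simplicial replacements to be Reedy cofibrant, so I would either carry such a hypothesis in line with the rest of Section \ref{section IF cat stuff} or insert a cofibrant replacement before realizing.
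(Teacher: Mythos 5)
Your argument is essentially the paper's own proof: both decompose $\hocolim{\prescript{}{X}{[T_1\dots T_n]}}{U}$ via the Grothendieck construction of Remark \ref{groth construct} into an iterated homotopy colimit over $A = \prescript{}{X}{[T_1\dots T_{n-1}]}$ of the fiber homotopy colimits over $\rho^\ast T_n$, and then invoke the hypothesis to replace each fiber by $\mathrm{domain}(\rho)$ up to weak equivalence. The only difference is that the paper simply cites the homotopy-colimit decomposition theorem for Grothendieck constructions (Theorem 26.8 of its reference) where you propose to reprove it with a bisimplicial diagonal argument.
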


\begin{proof}
We will use $\rho$ as an abbreviation for $(\rho_1,\dots,\rho_{n-1})\in \prescript{}{X}{[T_1\dots T_{n-1}]}$.
Additionally, we will abuse notation and use $\rho$ to represent $\rho_1\circ\dots\circ\rho_{n-1}$ (e.g. $\rho^\ast T_n$).

By remark \ref{groth construct}, $\prescript{}{X}{[T_1\dots T_{n}]}$ is a Grothendieck construction and its objects are $(\rho\in\prescript{}{X}{[T_1\dots T_{n-1}]}, \tau\in \rho^\ast T_n)$.
Thus by \cite[][Theorem 26.8]{GrothendieckConstruction},
$$\hocolim{\prescript{}{X}{[T_1\dots T_{n}]}}{U} \simeq
\hocolim{\rho\in\prescript{}{X}{[T_1\dots T_{n-1}]}}{\hocolim{\rho^\ast T_n}{U}}.$$

On the other hand, by assumption, for all $\rho\in \prescript{}{X}{[T_1\dots T_{n-1}]}$,
$$ \hocolim{\rho^\ast T_n}{U} \simeq \text{domain }(\rho). $$
%
Thus
$$\hocolim{\rho\in\prescript{}{X}{[T_1\dots T_{n-1}]}}{\hocolim{\rho^\ast T_n}{U}}
\simeq \hocolim{\prescript{}{X}{[T_1\dots T_{n-1}]}}{U} .$$
%
Putting everything together yields $\hocolim{\prescript{}{X}{[T_1\dots T_{n}]}}{U} \simeq \hocolim{\prescript{}{X}{[T_1\dots T_{n-1}]}}{U}$ and therefore $\mathscr{F}_\ast$ is a weak equivalence.
\end{proof}

\section{Universal Colim and Hocolim Sieves}\label{UCS and HCS form a GTop}

In this section we show that the collections of universal colim sieves and universal hocolim sieves form Grothendieck topologies. 
As we will see later, the maximality and stability conditions follow easily, so we will focus our discussion on the transitivity condition.

Let $\EuScript{U}$ be either the collection of universal colim sieves or the collection of hocolim sieves for the category $\EuScript{C}$ with $\EuScript{U}(X)$ the universal colim/hocolim sieves on $X$.
From here on out, we fix $S\in\mathcal{U}(X)$ and a sieve $R$ on $X$ such that for all $f\in S$, $f^\ast R\in\EuScript{U}(\text{domain}\ f)$.
We want to prove that $R\in\EuScript{U}(X)$.
We will specifically discuss our technique for showing that $R$ is a colim/hocolim sieve; universality is not difficult to see and will be shown later.

\begin{rmk}\label{rework colimit into IFcat}
By definition, $R$ is a colim sieve if and only if $X$ is a colimit for $R$.
But by Lemma \ref{reword colim defn}, this is equivalent to the induced map $\phi_R^\ast$, specifically $\phi_R^\ast\colon \mathscr{A}_\EuScript{C}(cX,cY) \to \mathscr{A}_\EuScript{C}(R,cY)$, being a bijection for all objects $Y$ of $\EuScript{C}$ (see Notation \ref{cocone sieve map} for the definition of $\phi_R$).
\end{rmk}
\bigskip

\noindent\textsc{General Outline for Transitivity}
\bigskip

We will be using the following noncommutative diagram in $\mathscr{A}_\EuScript{C}$:
\begin{equation}\label{diagram 1}
\begin{tikzcd}
R \arrow{r}[above]{\phi_R} &
cX \\
\prescript{}{X}{[RS]} \arrow{u}[left]{\widetilde{\mathscr{F}}} &
S \arrow{u}[right]{\phi_S} \\
\prescript{}{X}{[RSR]} \arrow{u}[left]{\widetilde{\mathscr{F}}} \arrow{r}[below]{\widetilde{\mu}} &
\prescript{}{X}{[SR]} \arrow{uul}[above right]{\widetilde{\mu}} \arrow{u}[right] {\widetilde{\mathscr{F}}}.
\end{tikzcd}
\end{equation}
Note: $\prescript{}{X}{[T_1T_2\dots T_n]}$ is shorthand for $( \prescript{}{X}{[T_1T_2\dots T_n]}, U)$, just like how $R$ and $S$ are shorthand for $(R,U)$ and $(S,U)$ respectively.

\begin{itemize}
\item We will show that the upper right triangle commutes and the lower left triangle commutes up to a 2-morphism.
\item Then we will work with the two cases: (i) universal colim sieves, (ii) universal hocolim sieves.
\begin{enumerate}
  \item[(i)] We will apply $\mathscr{A}_\EuScript{C}(-,cY)$ levelwise to the diagram.
    \begin{itemize}
    \item By Lemma \ref{ll triangle commutes} this will result in a commutative diagram.
    \item By Corollary \ref{forgetful maps induce isom} all resulting vertical maps will be bijections.
    \end{itemize}
  \item[(ii)] We will apply homotopy colimits levelwise to the diagram.
    \begin{itemize}
    \item By Corollary \ref{homotopical commutivity} this will result in a commutative diagram.
    \item By Lemma \ref{vert maps are we in diagram} all resulting vertical maps will be weak equivalences.
    \end{itemize}
\end{enumerate}
\item It will then follow formally that the map induced by $\phi_R$ is a bijection/weak equivalence (depending on the case).
\end{itemize}

Since the first piece of this outline depends solely on diagram (\ref{diagram 1}), we discuss it now; the rest of the outline will be completed during the proofs of Theorems \ref{ucs is a top} and \ref{uhcs is a top} where we show that the collections of universal colim sieves and universal hocolim sieves form Grothendieck topologies.

\bigskip

\pagebreak

\noindent\textsc{``Commutivity'' of diagram (\ref{diagram 1})}
\bigskip

\begin{lemma}\label{ur triangle lemma}
In diagram (\ref{diagram 1}), the upper right triangle commutes.
\end{lemma}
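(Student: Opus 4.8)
The plan is to verify the identity of morphisms $\phi_R\circ\widetilde{\mu} = \phi_S\circ\widetilde{\mathscr{F}}\colon \prescript{}{X}{[SR]}\to cX$ in $\mathscr{A}_\EuScript{C}$ directly from the definitions. Since a morphism in $\mathscr{A}_\EuScript{C}$ is a pair (functor, natural transformation), I will check the two coordinates separately. For the functor coordinate, both composites land in $cX = (\ast,c_X)$, whose indexing category is the terminal category $\ast$; any functor into $\ast$ is the unique terminal functor, so the functor parts $t\circ\mu$ and $t\circ\mathscr{F}$ agree automatically. This disposes of the first coordinate and reduces the whole lemma to comparing natural transformations.

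For the natural-transformation coordinate, I will use the composition rule recorded (implicitly) in Lemma \ref{ll triangle commutes}, namely that precomposing a morphism $(k,\eta_k)$ by $(g,\eta_g)$ produces the pair $(k\circ g,\; g^\ast(\eta_k)\circ\eta_g)$, where $g^\ast(\eta_k)$ denotes whiskering. Applying this to $\phi_R = (t,\varphi_R)$ and $\widetilde{\mu} = (\mu,\eta_\mu)$ gives the natural transformation $\mu^\ast(\varphi_R)\circ\eta_\mu$, and applying it to $\phi_S = (t,\varphi_S)$ and $\widetilde{\mathscr{F}} = (\mathscr{F},\eta_{\mathscr{F}})$ gives $\mathscr{F}^\ast(\varphi_S)\circ\eta_{\mathscr{F}}$. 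I will then evaluate both at an arbitrary object $(\rho_1,\rho_2)$ of $\prescript{}{X}{[SR]}$, recalling that such an object satisfies $\rho_1\in S$ and $\rho_1\circ\rho_2\in R$.

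The computation is a short unwinding. On the $\phi_R\circ\widetilde{\mu}$ side, the component is $(\varphi_R)_{\mu(\rho_1,\rho_2)}\circ(\eta_\mu)_{(\rho_1,\rho_2)}$; since $\mu(\rho_1,\rho_2)=\rho_1\circ\rho_2$, since $(\varphi_R)_f = f$ by Notation \ref{cocone sieve map}, and since $(\eta_\mu)_{(\rho_1,\rho_2)} = \mathrm{id}$, this collapses to $\rho_1\circ\rho_2$. On the $\phi_S\circ\widetilde{\mathscr{F}}$ side, the component is $(\varphi_S)_{\mathscr{F}(\rho_1,\rho_2)}\circ(\eta_{\mathscr{F}})_{(\rho_1,\rho_2)}$; here $\mathscr{F}(\rho_1,\rho_2)=\rho_1$, so $(\varphi_S)_{\rho_1}=\rho_1$, and $(\eta_{\mathscr{F}})_{(\rho_1,\rho_2)}=\rho_2$, giving $\rho_1\circ\rho_2$ as well. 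The two components coincide at every object, so the natural transformations are equal and the upper right triangle commutes.

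I do not expect a genuine obstacle here: the statement is a formal diagram chase, and the only place demanding care is the bookkeeping of the composition law in $\mathscr{A}_\EuScript{C}$ together with the precise definitions of $\eta_\mu$, $\eta_{\mathscr{F}}$, $\varphi_R$, and $\varphi_S$. The one point worth stating explicitly, to keep the domains honest, is that $U(\rho_1,\rho_2)=\mathrm{domain}(\rho_2)$, so that both resulting maps are genuinely arrows $\mathrm{domain}(\rho_2)\to X$ equal to the composite $\rho_1\circ\rho_2$; everything else is immediate from the terminality of $\ast$.
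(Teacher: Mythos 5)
Your proposal is correct and follows essentially the same route as the paper's proof: both unpack the composites via the composition law $(k,\eta_k)\circ(g,\eta_g)=(k\circ g,\,g^\ast(\eta_k)\circ\eta_g)$, dispose of the functor coordinate by terminality of $\ast$, and check that both natural transformations evaluate to $\rho_1\circ\rho_2$ at each object of $\prescript{}{X}{[SR]}$. No discrepancies to report.
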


\begin{proof}
We start by unpacking what the compositions in the diagram are:
$$ \phi_R\circ\widetilde{\mu} =
(t,\varphi_R) \circ (\mu,\eta_{\mu}) =
(t\circ\mu, \mu^\ast\varphi_R \circ\eta_{\mu}) $$
$$ \phi_S\circ\widetilde{\mathscr{F}} =
(t,\varphi_S) \circ (\mathscr{F},\eta_{\mathscr{F}}) =
(t\circ\mathscr{F}, \mathscr{F}^\ast\varphi_S \circ\eta_{\mathscr{F}}) $$
Since $t$ is the terminal map, then
$ t\circ\mu = t\circ\mathscr{F}$.
To see that the natural transformations 
are the same fix
$(\rho,\tau)\in\prescript{}{X}{[SR]}$.
Then
$$ (\mu^\ast\varphi_R \circ\eta_{\mu})_{(\rho,\tau)} =
(\varphi_R)_{\mu(\rho,\tau)} \circ id =
(\varphi_R)_{\rho\circ\tau} =
\rho\circ\tau $$
%
and
$$ (\mathscr{F}^\ast\varphi_S \circ\eta_{\mathscr{F}})_{(\rho,\tau)} =
(\varphi_S)_{\mathscr{F}(\rho,\tau)} \circ \tau =
(\varphi_S)_{\rho} \circ \tau =
\rho\circ\tau. $$
Since the natural transformations are the same on all objects, the proof is complete.
\end{proof}

At this point it would be nice if the lower left triangle in the diagram also commuted, however, it does not.
Instead, it contains a 2-morphism:

\begin{lemma}\label{ll triangle 2 morph}
There exists a 2-morphism $\theta\colon \widetilde{\mu}\circ\widetilde{\mu}\to \widetilde{\mathscr{F}}\circ\widetilde{\mathscr{F}}$ where \\ $\widetilde{\mu}\circ\widetilde{\mu},\widetilde{\mathscr{F}}\circ\widetilde{\mathscr{F}}\colon \prescript{}{X}{[RSR]}\to R$.
\end{lemma}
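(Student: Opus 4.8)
The plan is to write down $\theta$ explicitly and then verify the two conditions that make it a $2$-morphism in $\mathscr{A}_\EuScript{C}$: naturality of the underlying transformation of functors, and commutativity of the triangle from the definition of a $2$-morphism. First I would record the effect of the two underlying functors on objects. Writing a typical object of $\prescript{}{X}{[RSR]}$ as $(\rho_1,\rho_2,\rho_3)$ with $X\xleftarrow{\rho_1}A_1\xleftarrow{\rho_2}A_2\xleftarrow{\rho_3}A_3$, one reads off from the definitions of $\mu$ and $\mathscr{F}$ that $\mu\circ\mu$ sends it to $\rho_1\rho_2\rho_3\in R$, while $\mathscr{F}\circ\mathscr{F}$ sends it to $\rho_1\in R$. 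The natural choice for $\theta$ at this object is the morphism in $R$ (regarded as a full subcategory of $(\EuScript{C}\downarrow X)$) from $\rho_1\rho_2\rho_3$ to $\rho_1$ whose underlying arrow in $\EuScript{C}$ is $\rho_2\rho_3\colon A_3\to A_1$; this is a legitimate morphism in $R$ since $\rho_1\circ(\rho_2\rho_3)=\rho_1\rho_2\rho_3$ and both $\rho_1\rho_2\rho_3$ and $\rho_1$ lie in $R$.

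Next I would check naturality. A morphism $(h_1,h_2,h_3)\colon(\rho_1,\rho_2,\rho_3)\to(\tau_1,\tau_2,\tau_3)$ in $\prescript{}{X}{[RSR]}$ is precisely a triple making the defining squares commute, so in particular $h_1\rho_2=\tau_2 h_2$ and $h_2\rho_3=\tau_3 h_3$. Tracing the two functors on this morphism gives $(\mu\circ\mu)(h_1,h_2,h_3)=h_3$ and $(\mathscr{F}\circ\mathscr{F})(h_1,h_2,h_3)=h_1$, so the naturality square for $\theta$ amounts to the identity $h_1\circ(\rho_2\rho_3)=(\tau_2\tau_3)\circ h_3$, which follows at once by combining the two relations above.

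Finally I would verify the triangle condition, which is the step demanding the most care. Using the composition law $(g_2,\eta_2)\circ(g_1,\eta_1)=(g_2 g_1,\ g_1^\ast\eta_2\circ\eta_1)$ together with the fact that every component of $\eta_\mu$ is an identity, the natural-transformation part of $\widetilde{\mu}\circ\widetilde{\mu}$ is the identity, whereas the part of $\widetilde{\mathscr{F}}\circ\widetilde{\mathscr{F}}$ has component $\rho_2\circ\rho_3$ at $(\rho_1,\rho_2,\rho_3)$, assembled from the two copies of $\eta_{\mathscr{F}}$. Since $U(\theta)$ at this object is exactly $\rho_2\rho_3$, the defining triangle reduces to $\rho_2\rho_3=(\rho_2\rho_3)\circ\mathrm{id}$, which holds. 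The only genuine obstacle here is bookkeeping: correctly assembling the two composite natural transformations in $\mathscr{A}_\EuScript{C}$, tracking all domains and codomains, and keeping the direction of $\theta$ (from $\widetilde{\mu}\circ\widetilde{\mu}$ to $\widetilde{\mathscr{F}}\circ\widetilde{\mathscr{F}}$) straight; once the components are written down, every verification is routine.
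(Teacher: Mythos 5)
Your proposal is correct and follows essentially the same route as the paper: it defines $\theta_{(\rho_1,\rho_2,\rho_3)}=\rho_2\circ\rho_3$, verifies naturality, and checks the defining triangle using that the composite of the two copies of $\eta_\mu$ is the identity while the composite of the two copies of $\eta_{\mathscr{F}}$ has component $\rho_2\circ\rho_3$. You in fact spell out the naturality square more explicitly than the paper does.
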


Two remarks: First, $\prescript{}{X}{[R]} = R$.
Second, this lemma and (a similar) proof hold for $\prescript{}{X}{[T_1T_2\dots T_n]}\to\prescript{}{X}{[T_1T_2\dots T_{n-2}]}$ when all $T_{odd} = T_1$ and $T_{even} = T_2$.
The two morphisms ``are'' $\mu\circ\mu\colon (\rho_1,\dots,\rho_n)\mapsto (\rho_1\circ\rho_2\circ\rho_3,\rho_4,\dots,\rho_n)$ and $\mathscr{F}\circ\mathscr{F}\colon (\rho_1,\dots,\rho_n)\mapsto (\rho_1,\dots,\rho_{n-2})$.

\begin{proof}
We start by recalling $\mu\circ\mu\colon \left[X\xleftarrow{\rho} A\xleftarrow{\tau} B\xleftarrow{\gamma} C\right]\mapsto\left[X\xleftarrow{\rho\tau\gamma} C\right]$ and \\ $\mathscr{F}\circ\mathscr{F}\colon\left[X\xleftarrow{\rho} A\xleftarrow{\tau} B\xleftarrow{\gamma} C\right]\mapsto\left[X\xleftarrow{\rho} A\right]$.
Now define $\theta\colon \widetilde{\mu}\circ\widetilde{\mu} \to \widetilde{\mathscr{F}}\circ\widetilde{\mathscr{F}}$ by
$(\theta)_{(\rho,\tau ,\gamma)} =
\tau\circ\gamma$.
We claim that this $\theta$ is the desired 2-morphism.

First, $\theta$ is clearly a natural transformation from $\mu^2$ to $\mathscr{F}^2$. Indeed, consider the following object in $\prescript{}{X}{[RSR]}$:
\begin{center}
\begin{tikzcd}
X &
A \arrow{l}[below]{\rho}[above]{\in R} &
B \arrow{l}[below]{\tau}\arrow[bend right = 30]{ll}[above]{\in S} &
C \arrow{l}[below]{\gamma}\arrow[bend right = 45]{lll}[above]{\in R}.
\end{tikzcd}
\end{center}
Notice that $\theta$ does the correct thing on objects since
$\mu^2(\rho,\tau,\gamma) =$
\begin{tikzcd}
X &
C \arrow{l}[below]{\rho\circ\tau\circ\gamma}[above]{\in R}
\end{tikzcd}
and
$\mathscr{F}^2(\rho,\tau,\gamma) =$
\begin{tikzcd}
X &
A \arrow{l}[below]{\rho}[above]{\in R},
\end{tikzcd}
and thus $\theta_{(\rho,\tau,\gamma)} = \tau\circ\gamma\colon C\to A$ is a morphism from $\mu^2 (\rho,\tau,\gamma)$ to $\mathscr{F}^2 (\rho,\tau,\gamma)$ in $R$.
It is similarly easy to see that $\theta$ behaves compatibly with the morphisms of $\prescript{}{X}{[RSR]}$.

Second, fix $(\rho,\tau,\gamma)\in\prescript{}{X}{[RSR]}$.
We also need to know that the diagram
\begin{center}
\begin{tikzcd}
& \text{domain}(\gamma) \arrow{dl}[above left]{id}\arrow{dr}[above right]{\tau\circ\gamma} & \\
\text{domain}(\rho\circ\tau\circ\gamma) \arrow{rr}[below]{\theta\ =\ \tau\circ\gamma} & &
\text{domain}(\rho)
\end{tikzcd}
\end{center}
is commutative, which it clearly is. Therefore, $\theta$ is our desired 2-morphism.
\end{proof}
\bigskip

\noindent\textsc{Grothendieck Topologies}
\bigskip

\begin{thm}\label{ucs is a top}
Let $\EuScript{C}$ be any category. The collection of all universal colim sieves on $\EuScript{C}$ forms a Grothendieck topology.
\end{thm}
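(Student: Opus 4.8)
The three axioms split into two easy ones and the substantive one. For \textbf{maximality}, the maximal sieve $(\EuScript{C}\downarrow X)$ contains $id_X$, which is a terminal object once the sieve is viewed as a full subcategory of the overcategory; hence the colimit of $U$ over it is carried by $X$ with the canonical map an isomorphism, so the maximal sieve is a colim sieve, and since the pullback of a maximal sieve is again maximal it is in fact a universal colim sieve. For \textbf{stability}, if $S$ is a universal colim sieve on $X$ and $f\colon Y\to X$, then for every $\beta\colon W\to Y$ we have $\beta^\ast(f^\ast S) = (f\circ\beta)^\ast S$, which is a colim sieve because $S$ is universal; hence $f^\ast S$ is a universal colim sieve. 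I would dispatch both of these in a couple of lines.

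The heart of the theorem is \textbf{transitivity}. Fix $S\in\EuScript{U}(X)$ and a sieve $R$ on $X$ with $f^\ast R\in\EuScript{U}(\operatorname{domain} f)$ for every $f\in S$; the goal is to show $R$ is a universal colim sieve. By Remark \ref{rework colimit into IFcat}, showing that $R$ is a colim sieve amounts to showing that $\phi_R^\ast\colon\mathscr{A}_\EuScript{C}(cX,cY)\to\mathscr{A}_\EuScript{C}(R,cY)$ is a bijection for all $Y$. The plan is to apply the contravariant functor $\mathscr{A}_\EuScript{C}(-,cY)$ to diagram (\ref{diagram 1}) and run a formal diagram chase. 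By Lemma \ref{ur triangle lemma} the upper right triangle commutes on the nose, and by Lemmas \ref{ll triangle 2 morph} and \ref{ll triangle commutes} the lower left triangle induces a genuinely commuting triangle after applying $\mathscr{A}_\EuScript{C}(-,cY)$ (a $2$-morphism induces equal maps into any $cY$). The key input is that all three vertical forgetful maps $\widetilde{\mathscr{F}}$ become bijections: each is an instance of Corollary \ref{forgetful maps induce isom}, with the required colim-sieve hypothesis supplied either by the universality of $S$ (for the pair $(V,W)=(R,S)$, since then $f^\ast S$ is a colim sieve for every $f\in R$) or by the standing assumption on $R$ (for $(V,W)=(S,R)$, since $f^\ast R$ is a colim sieve for $f\in S$). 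In addition $\phi_S^\ast$ is a bijection because $S$ is a colim sieve.

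The chase is then purely formal. Write $d\colon\prescript{}{X}{[SR]}\to R$ for the diagonal $\widetilde{\mu}$; its induced map $d^\ast$ occurs in both triangle identities. Because the right-hand composite $\widetilde{\mathscr{F}}^\ast\circ\phi_S^\ast$ is a bijection, the upper triangle gives that $d^\ast\circ\phi_R^\ast$ is a bijection, whence $d^\ast$ is surjective; because the composite $\widetilde{\mathscr{F}}^\ast\circ\widetilde{\mathscr{F}}^\ast$ of the two left forgetful maps is a bijection, the lower triangle gives that $\widetilde{\mu}^\ast\circ d^\ast$ is a bijection, whence $d^\ast$ is injective. Thus $d^\ast$ is a bijection, and then $\phi_R^\ast = (d^\ast)^{-1}\circ(d^\ast\circ\phi_R^\ast)$ is a bijection, so $R$ is a colim sieve. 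For \textbf{universality} I would rerun this argument with $X$, $S$, $R$ replaced by $Z$, $\alpha^\ast S$, $\alpha^\ast R$ for an arbitrary $\alpha\colon Z\to X$: here $\alpha^\ast S$ is a universal colim sieve by stability, and for $g\in\alpha^\ast S$ we have $g^\ast(\alpha^\ast R)=(\alpha\circ g)^\ast R\in\EuScript{U}(\operatorname{domain} g)$ since $\alpha\circ g\in S$, so the hypotheses transfer and $\alpha^\ast R$ is a colim sieve.

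I expect the main obstacle to be the bookkeeping inside the transitivity chase: correctly matching each $\widetilde{\mathscr{F}}$ to its instance $(V,W)$ of Corollary \ref{forgetful maps induce isom} and checking the corresponding colim-sieve hypothesis, and tracking the reversal of all arrows under $\mathscr{A}_\EuScript{C}(-,cY)$ so that the surjectivity-from-one-relation and injectivity-from-the-other extraction for $d^\ast$ lands on the correct factor. Once the two triangle identities and the four bijectivity facts are assembled, the deduction that $\phi_R^\ast$ is a bijection is immediate.
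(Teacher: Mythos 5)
Your proposal is correct and follows essentially the same route as the paper: easy verification of maximality and stability, then transitivity via applying $\mathscr{A}_\EuScript{C}(-,cY)$ to diagram (\ref{diagram 1}), using Lemmas \ref{ur triangle lemma}, \ref{ll triangle 2 morph}, \ref{ll triangle commutes} and Corollary \ref{forgetful maps induce isom} exactly as the paper does, and extracting injectivity and surjectivity of the diagonal from the two triangles before concluding that $\phi_R^\ast$ is a bijection. The instances of Corollary \ref{forgetful maps induce isom} and the reduction of universality to the colim-sieve case are matched correctly, so there is nothing to add.
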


\begin{proof}
Let $\EuScript{U}$ be the collection of universal colim sieves for the category $\EuScript{C}$ with $\EuScript{U}(X)$ the collection of universal colim sieves on $X$.
The first two properties, i.e. the maximal and stability axioms, are easy to check.
Indeed, stability is immediate from the definition of universal colim sieve whereas
the maximal sieve on $X$ is the category $(\EuScript{C}\downarrow X)$, which
has a terminal object, namely $id\colon X\to X$.
Thus the inclusion functor $L\colon \ast\to (\EuScript{C}\downarrow X)$ given by $L(\ast) = id$ (see Notation \ref{trivial cat note}) is a final functor.
Hence by \cite[][Theorem 1, Section 3, Chapter IX]{cwm}
$$\colim_{(\EuScript{C}\downarrow X)}{U} \cong \colim_{\ast}{UL} \cong UL(\ast) = X$$
and so the maximal sieve on $X$ is a colim sieve.
Moreover, for all $f\colon Y\to X$ in $\EuScript{C}$, $f^\ast (\EuScript{C}\downarrow X) = (\EuScript{C}\downarrow Y)$, which by the previous argument is a colim sieve on $Y$.
Therefore, $(\EuScript{C}\downarrow X)\in\EuScript{U}(X)$.

In order to prove transitivity, we fix $S\in\EuScript{U}(X)$ and a sieve $R$ on $X$ such that for all $f\in S$, $f^\ast R\in\EuScript{U}(\text{domain}\ f)$.
We need to prove that $R\in\EuScript{U}(X)$. First we will remove the need to show universality.
Indeed, up to notation, for any morphism $\alpha$ in $\EuScript{C}$ with codomain $X$, we have the same assumptions for $\alpha^\ast R$ as we have for $R$ (when we use $\alpha^\ast S$ instead of $S$). 
In particular, this means that showing $R$ is a colim sieve on $X$ will also show (up to notation) that each $\alpha^\ast R$ is a colim sieve. 
Therefore it suffices to show that $R$ is a colim sieve.
But by Remark \ref{rework colimit into IFcat} this means: to prove that $R$ is a universal colim sieve, it suffices to prove that $\phi_R^\ast\colon  \mathscr{A}_\EuScript{C}(cX,cY) \to \mathscr{A}_\EuScript{C}(R,cY)$ is a bijection for all objects $Y$ of $\EuScript{C}$.

Now fix $Y$, an object of $\EuScript{C}$, and apply $\mathscr{A}_\EuScript{C}(-,cY)$ to diagram (\ref{diagram 1}) in order to obtain the following 
diagram of sets:
\begin{equation}\label{diagram 2}
\begin{tikzcd}
\mathscr{A}_\EuScript{C}(R,cY) \arrow{d}[left]{\widetilde{\mathscr{F}}^\ast}\arrow{ddr}[above right]{\widetilde{\mu}^\ast}&
\mathscr{A}_\EuScript{C}(cX,cY) \arrow{l}[above]{\phi_R^\ast}\arrow{d}[right]{\phi_S^\ast} \\
\mathscr{A}_\EuScript{C}(\prescript{}{X}{[RS]},cY) \arrow{d}[left]{\widetilde{\mathscr{F}}^\ast} &
\mathscr{A}_\EuScript{C}(S,cY)\arrow{d}[right]{\widetilde{\mathscr{F}}^\ast} \\
\mathscr{A}_\EuScript{C}(\prescript{}{X}{[RSR]},cY) &
\mathscr{A}_\EuScript{C}(\prescript{}{X}{[SR]},cY) \arrow{l}[below] {\widetilde{\mu}^\ast}.
\end{tikzcd}
\end{equation}
We will use this diagram to prove that $\phi_R^\ast$ is a bijection.

The upper right triangle in diagram (\ref{diagram 2}) commutes by Lemma \ref{ur triangle lemma}.
Moreover, since the lower left triangle in the first diagram contained a 2-morphism (by Lemma \ref{ll triangle 2 morph}), 
then Lemma \ref{ll triangle commutes} shows that the lower left triangle in diagram (\ref{diagram 2}) commutes.
Thus (\ref{diagram 2}) is a commutative diagram of sets.

Now we will discuss some of the morphisms in (\ref{diagram 2}).
First, notice that by Lemma \ref{reword colim defn}, since $S$ is a colim sieve, $\phi_S^\ast$ is a bijection.
Second, notice that Corollary \ref{forgetful maps induce isom} implies that all of the maps $\widetilde{\mathscr{F}}^\ast$ in diagram (\ref{diagram 2}) are bijections.
Indeed, by Corollary \ref{forgetful maps induce isom}, our assumptions on $R$ imply that both induced morphisms $\widetilde{\mathscr{F}}^\ast\colon \mathscr{A}_\EuScript{C}(S,cY) \to \mathscr{A}_\EuScript{C}(\prescript{}{X}{[SR]},cY)$ and $\widetilde{\mathscr{F}}^\ast\colon \mathscr{A}_\EuScript{C}(\prescript{}{X}{[RS]},cY) \to \mathscr{A}_\EuScript{C}(\prescript{}{X}{[RSR]},cY)$ are bijections, and our assumptions on $S$ imply that the induced morphism $\widetilde{\mathscr{F}}^\ast\colon \mathscr{A}_\EuScript{C}(R,cY) \to \mathscr{A}_\EuScript{C}(\prescript{}{X}{[RS]},cY)$ is a bijection.
Hence all vertical maps in diagram (\ref{diagram 2}) are isomorphisms.

We summarize the results about diagram (\ref{diagram 2}): 
we have commutative triangles that combine to make a commutative diagram of sets of the form
\begin{equation}\label{diagram 3}
\begin{tikzcd}
\mathscr{A}_\EuScript{C}(R,cY) \arrow{d}[left]{\cong} \arrow{dr}[below left]{\alpha}&
\mathscr{A}_\EuScript{C}(cX,cY)  \arrow{l}[above]{\phi_R^\ast}\arrow{d}[right]{\cong} \\
A &
B \arrow{l}.
\end{tikzcd}
\end{equation}
Notice that some of the details mentioned in diagram (\ref{diagram 2}) are not mentioned in the above diagram. Indeed, we only need to know that for each $Y$ some such $A$, $B$ and $\alpha$ exist, their specific values are not required; diagram (\ref{diagram 2}) is what guarantees their existance.

Using the lower left triangle in diagram (\ref{diagram 3}) we see that $\alpha$ is an injection.
Whereas the upper right triangle in diagram (\ref{diagram 3}) shows that $\alpha$ is a surjection.
Therefore, $\alpha$ is a bijection.
Now the commutativity of the upper right triangle in diagram (\ref{diagram 3}) implies that $\phi_R^\ast$ is a bijection. 
Hence we have completed the proof of transitivity.
\end{proof}


\begin{thm}\label{uhcs is a top}
For a simplicial model category $\EuScript{M}$, the collection of all universal hocolim sieves on $\EuScript{M}$ forms a Grothendieck topology, which we dub the \textit{homotopical canonical topology}.
\end{thm}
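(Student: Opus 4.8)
The plan is to run the argument of Theorem~\ref{ucs is a top} in case (ii) of the ``General Outline for Transitivity,'' replacing the contravariant functor $\mathscr{A}_\EuScript{M}(-,cY)$ by the (covariant) operation of taking homotopy colimits levelwise; because this operation is covariant, the arrows of diagram~(\ref{diagram 1}) keep their direction. First I dispatch maximality and stability. Stability is immediate from the definition of a universal hocolim sieve. For maximality, the maximal sieve $(\EuScript{M}\downarrow X)$ has a terminal object $id_X$, so the inclusion $\ast\to(\EuScript{M}\downarrow X)$ is homotopy final and hence $\hocolim{(\EuScript{M}\downarrow X)}{U}\simeq X$; thus the maximal sieve is a hocolim sieve, and it is universal because pulling it back along any $f\colon Y\to X$ returns the maximal sieve on $Y$.

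For transitivity, fix $S\in\EuScript{U}(X)$ and a sieve $R$ on $X$ with $f^\ast R\in\EuScript{U}(\mathrm{domain}\,f)$ for every $f\in S$. Exactly as in the colim case, these hypotheses are stable under pulling back along any $\alpha\colon Y\to X$ (since $\alpha^\ast S$ is again universal, and for $g\in\alpha^\ast S$ one has $g^\ast(\alpha^\ast R)=(\alpha g)^\ast R$ with $\alpha g\in S$), so it suffices to prove that $R$ is a hocolim sieve, i.e.\ that the canonical map $(\phi_R)_\ast\colon\hocolim{R}{U}\to X$ (where $X\simeq\hocolim{\ast}{c_X}$ is the homotopy colimit of the constant diagram $cX$) is a weak equivalence. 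I apply $\hocolim$ levelwise to diagram~(\ref{diagram 1}). The upper right triangle remains commutative because Lemma~\ref{ur triangle lemma} gives an equality of morphisms in $\mathscr{A}_\EuScript{M}$ and $(-)_\ast$ is functorial; the lower left triangle becomes commutative up to homotopy by Corollary~\ref{homotopical commutivity} applied to the $2$-morphism $\theta$ of Lemma~\ref{ll triangle 2 morph}, where $\widetilde{\mu}\circ\widetilde{\mu}$ carries the identity natural transformation and $\widetilde{\mathscr{F}}\circ\widetilde{\mathscr{F}}$ carries $U\theta$, matching the shape required by that corollary. By Lemma~\ref{vert maps are we in diagram} each of the three maps $\widetilde{\mathscr{F}}_\ast$ is a weak equivalence (in each case the hypothesis ``$f^\ast(\text{last sieve})$ is universal'' is either our assumption on $R$ or the universality of $S$), and $(\phi_S)_\ast$ is a weak equivalence since $S$ is a hocolim sieve.

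Now I pass to the homotopy category $\mathrm{Ho}(\EuScript{M})$ and run the formal argument that replaces the injective/surjective bookkeeping of diagram~(\ref{diagram 3}). Write $c=(\phi_R)_\ast$, let $a\colon\hocolim{\prescript{}{X}{[SR]}}{U}\to\hocolim{R}{U}$ be the induced diagonal $\widetilde{\mu}_\ast$, and let $b\colon\hocolim{\prescript{}{X}{[RSR]}}{U}\to\hocolim{\prescript{}{X}{[SR]}}{U}$ be the induced bottom $\widetilde{\mu}_\ast$. The upper right triangle gives $c\circ a=(\phi_S)_\ast\circ\widetilde{\mathscr{F}}_\ast$, an isomorphism, so $a$ is a split monomorphism and $c$ a split epimorphism; the lower left triangle gives $a\circ b=(\widetilde{\mu}\circ\widetilde{\mu})_\ast\simeq(\widetilde{\mathscr{F}}\circ\widetilde{\mathscr{F}})_\ast$, again an isomorphism, so $a$ is a split epimorphism. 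Hence $a$ is an isomorphism, and therefore $c=(c\circ a)\circ a^{-1}$ is an isomorphism, i.e.\ $(\phi_R)_\ast$ is a weak equivalence. This proves $R$ is a (universal) hocolim sieve and completes transitivity.

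The step I expect to be the main obstacle is the cofibrancy bookkeeping needed to make these homotopy colimits homotopically meaningful and, in particular, to invoke Corollary~\ref{homotopical commutivity}, which requires $\srep(U\circ(\mu\circ\mu))$ to be Reedy cofibrant; this amounts to asking that the diagram $U$ on $\prescript{}{X}{[RSR]}$ (hence the domains of the morphisms comprising the sieves) be objectwise cofibrant. I plan to handle this uniformly by taking objectwise-cofibrant replacements of the sieve diagrams (or by building cofibrant replacement into $\hocolim$), after checking that this neither disturbs the strict commutativity of the upper right triangle nor the Grothendieck-construction identifications underlying Lemma~\ref{vert maps are we in diagram}. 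A lesser point to verify is that the second component of $\widetilde{\mathscr{F}}\circ\widetilde{\mathscr{F}}$ is genuinely $U\theta$, so that Corollary~\ref{homotopical commutivity} applies verbatim.
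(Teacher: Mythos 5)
Your proposal is correct and follows essentially the same route as the paper: the same reduction of universality, the same application of levelwise homotopy colimits to diagram~(\ref{diagram 1}), Lemmas~\ref{ur triangle lemma}, \ref{ll triangle 2 morph}, \ref{vert maps are we in diagram} and Corollary~\ref{homotopical commutivity} in the same roles, with your split-mono/split-epi argument in $\mathrm{Ho}(\EuScript{M})$ being just a repackaging of the paper's injectivity/surjectivity argument after applying $\text{Ho}_\EuScript{M}(Z,-)$. The Reedy-cofibrancy point you flag is real --- the paper simply asserts the Reedy cofibrancy of $\srep(U\mu^2)$ without comment --- so your plan to handle it by objectwise cofibrant replacement is, if anything, more careful than the source.
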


\begin{proof}
Let $\EuScript{U}$ be the collection of universal hocolim sieves for the simplicial model category $\EuScript{M}$ with $\EuScript{U}(X)$ the collection of universal hocolim sieves on $X$.
The first two conditions of a Grothendieck topology are easy to check.
Indeed, stabiility automatically follows from the definition of universal hocolim sieve whereas maximality follows from $f^\ast (\EuScript{M}\downarrow X) = (\EuScript{M}\downarrow Y)$. Specifically, for all $f\colon Y\to X$, $f^\ast (\EuScript{M}\downarrow X) = (\EuScript{M}\downarrow Y)$ and thus in order to prove the first condition, it suffices to show $\hocolim{(\EuScript{M}\downarrow X)} U \simeq X$. But $(\EuScript{M}\downarrow X)$ has a final object, namely $X \stackrel{\text{id}}{\rightarrow} X$.
But by \cite[][Section 6, Lemma 6.8]{primer}, $$\hocolim{(\EuScript{M}\downarrow X)} U \simeq U(id) = X.$$

The rest of the proof will focus on transitivity.
Fix a sieve $S\in\EuScript{U}(X)$ and a sieve $R$ on $X$ such that for all $f\in S$, $f^\ast R\in\EuScript{U}(\text{domain}\ f)$.
We will show that $R\in\EuScript{U}(X)$.

We start by removing the need to show universality.
Up to notation, for any morphism $\alpha$ in $\EuScript{M}$ with codomain $X$, we have the same assumptions for $\alpha^\ast R$ as we have for $R$ (when we use $\alpha^\ast S$ instead of $S$).
In particular, this means that showing $R$ is a hocolim sieve on $X$ will also show (up to notation) that each $\alpha^\ast R$ is a hocolim sieve.
Therefore it suffices to show that $R$ is a hocolim sieve.

Now take diagram (\ref{diagram 1}) and apply homotopy colimits levelwise to obtain the following noncommutative diagram:
\begin{equation}\label{ho diagram 2}
\begin{tikzcd}
\hocolim{\prescript{}{X}{[R]}}{U}
\arrow{r}[above]{\mathscr{F}_\ast} &
\hocolim{\prescript{}{X}{[\ ]}}{U}
\arrow{r}[above]{\simeq} &
X\\
\hocolim{\prescript{}{X}{[RSR]}}{U}
\arrow{u}[left]{(\mathscr{F}\circ\mathscr{F})_\ast}
\arrow{r}[below]{\mu_\ast} &
\hocolim{\prescript{}{X}{[SR]}}{U}
\arrow{u}[right]{(\mathscr{F}\circ\mathscr{F})_\ast}
\arrow{ul}[above right]{\mu_\ast}.
\end{tikzcd}
\end{equation}
Remark:
In the above diagram, we think of $cX$ as $\prescript{}{X}{[\,]}$, the subcategory of $(\EuScript{M} \downarrow X)$ containing $(id_X\colon X \to X)$ as its only object and no non-identity morphisms, which allows us to write $\phi_S$ as $\mathscr{F}$.

Since $\prescript{}{X}{[R]} = R$, then we can prove that $R$ is a hocolim sieve on $X$ by showing that the top horizontal map $\mathscr{F}_\ast$ in (\ref{ho diagram 2}) is a weak equivalence.

First notice that all vertical maps $(\mathscr{F}\circ\mathscr{F})_\ast$ in (\ref{ho diagram 2}) are weak equivalences since $(\mathscr{F}\circ\mathscr{F})_\ast = \mathscr{F}_\ast \circ \mathscr{F}_\ast$ and by Lemma \ref{vert maps are we in diagram}. Second notice that by Lemma \ref{ll triangle 2 morph} and the Reedy cofibrancy of $\srep{(U\mu^2)}$, we may apply Corollary \ref{homotopical commutivity}. Hence every part of diagram (\ref{ho diagram 2}) commutes up to homotopy.

We now summarize the discussion from earlier in the section by summarizing the pertinent results about diagram (\ref{ho diagram 2}): 
in the homotopy category, we have commutative triangles that combine to make a commutative diagram of the form
\begin{center}
\begin{tikzcd}
\hocolim{\prescript{}{X}{[R]}}{U}
\arrow{r}[above]{\mathscr{F}_\ast} &
\hocolim{\prescript{}{X}{[\ ]}}{U}
\arrow{r}[above]{\cong} &
X\\
A
\arrow{u}[left]{\cong}
\arrow{r} &
B
\arrow{u}[right]{\cong}
\arrow{ul}.
\end{tikzcd}
\end{center}
By applying $\text{Ho}_\EuScript{M}(Z,-)$ (i.e.\ the homotopy classes of maps in $\EuScript{M}$ from $Z$ to $-$) levelwise to the above diagram, it follows immediately that 
the diagonal morphism $d_Z\colon \text{Ho}_\EuScript{M}(Z,B) \to\text{Ho}_\EuScript{M}(Z,\hocolim{\prescript{}{X}{[R]}}{U})$ is a bijection.
Indeed, the two ways to get from $B$ to $X$ imply that $d_Z$ is an injection whereas the two ways to get from $A$ to $\hocolim{\prescript{}{X}{[R]}}{U}$ imply that $d_Z$ is a surjection.
Since $d_Z$ is a bijection for all $Z$, then the diagonal map $B\to \hocolim{\prescript{}{X}{[R]}}{U}$ is an isomorphism.
Thus the diagram's commutativity implies that the top horizontal morphism $\mathscr{F}_\ast$ is also an isomorphism. 
Hence we have completed the proof of transitivity.
\end{proof}

\section{Universal Colim Sieves and the Canonical Topology}\label{section ucs and can top}

In this section we show that the collection of all universal colim sieves forms the canonical topology; this folklore result is mentioned in \cite{johnstone2002sketches}.
Additionally, we give a basis for the canonical topology.



\begin{thm}\label{can top is ucs}
For any (locally small) category $\EuScript{C}$, the collection of all universal colim sieves on $\EuScript{C}$ is the canonical topology.
\end{thm}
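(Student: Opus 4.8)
The plan is to prove the two inclusions separately, using the fact that the canonical topology, being the largest Grothendieck topology in which every representable presheaf is a sheaf, can be compared against any candidate topology simply by checking the representable-sheaf condition. Throughout I would exploit the following translation between the sheaf condition and Definition \ref{def of cs and ucs}: a sieve $S$ on $X$, regarded as a subfunctor of $\EuScript{C}(-,X)$, makes the representable presheaf $\EuScript{C}(-,Y)$ into a sheaf precisely when the restriction map $\EuScript{C}(X,Y)\cong\mathrm{Nat}(\EuScript{C}(-,X),\EuScript{C}(-,Y))\to\mathrm{Nat}(S,\EuScript{C}(-,Y))$ is a bijection.

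The key lemma I would establish first is: \emph{a sieve $S$ on $X$ is a colim sieve if and only if $\EuScript{C}(-,Y)$ is a sheaf for $S$ for every object $Y$.} The heart of this is the identification of $\mathrm{Nat}(S,\EuScript{C}(-,Y))$ with the set of cocones under the forgetful diagram $U\colon S\to\EuScript{C}$ with vertex $Y$: a natural transformation $\tau$ assigns to each $f\colon A\to X$ in $S$ an arrow $\tau(f)\colon A\to Y$, and naturality in $A$ is exactly the statement that these arrows form a compatible cocone on $U$. Under this identification the restriction map above is precisely the canonical comparison map from $\EuScript{C}(X,Y)$ to the set of cocones, so its being a bijection for all $Y$ is exactly the universal property asserting that $X$ is $\colim_S U$. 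This is the reformulation already packaged in Lemma \ref{reword colim defn} together with Remark \ref{maps in IF on constants} (taking $\mathscr{A}_\EuScript{C}((S,U),cY)$ to be the cocone set and $\mathscr{A}_\EuScript{C}(cX,cY)\cong\EuScript{C}(X,Y)$), so I would phrase it in that language to maintain continuity with the earlier machinery. Local smallness of $\EuScript{C}$ is what guarantees that $\EuScript{C}(-,Y)$ is a genuine $\mathbf{Set}$-valued presheaf and that these cocone collections are sets.

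With the lemma in hand, the two inclusions follow formally. For ``universal colim sieves $\subseteq$ canonical'': by Theorem \ref{ucs is a top} the universal colim sieves form a Grothendieck topology, and every universal colim sieve is in particular a colim sieve (take $\alpha=\mathrm{id}$ in Definition \ref{def of cs and ucs}), so by the lemma every representable is a sheaf for this topology; since the canonical topology is the largest topology with this property, the universal colim sieves are contained in it. For ``canonical $\subseteq$ universal colim sieves'': let $S$ be a covering sieve on $X$ in the canonical topology $J$. Because every representable is a sheaf for $J$, the lemma shows $S$ is a colim sieve; but $J$ is a topology, so by stability $\alpha^\ast S\in J(Y)$ for every $\alpha\colon Y\to X$, and each such $\alpha^\ast S$ is then also a colim sieve by the lemma. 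Hence $S$ is a universal colim sieve, and combining the inclusions gives equality.

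The main obstacle I anticipate is the careful bookkeeping in the key lemma, specifically verifying that naturality of a transformation $S\to\EuScript{C}(-,Y)$ of subfunctors corresponds exactly to cocone compatibility over \emph{all} morphisms of $S$ (not merely over a generating set of arrows), and confirming that the comparison map one obtains is the canonical one. The remainder is a formal two-sided containment driven by the maximality defining the canonical topology; the only hypothesis that needs watching is local smallness, invoked to keep all the relevant Hom- and cocone-collections genuinely small.
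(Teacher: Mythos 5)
Your proposal is correct and follows essentially the same route as the paper: both directions rest on the observation that the sheaf condition for a representable $\EuScript{C}(-,M)$ on a sieve $S$ is, via Yoneda, equivalent to $X\cong\colim_S U$ (your key lemma is exactly the paper's equation relating the sheaf equalizer to $\lim_S(rM\circ U)$), with stability of the canonical topology handling the pullbacks in one direction and maximality of the canonical topology plus Theorem \ref{ucs is a top} handling the other.
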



\begin{proof}
We start with a fact that will be used a few times: The equalizer in the sheaf condition can be expressed as a limit over a covering sieve. Specifically, for a presheaf $F$ and covering sieve $S$ 
\begin{equation}\label{star}
\Eq\left(\prod_{A\xrightarrow{f} X\in S} F(A) \substack{\xrightarrow{\alpha} \\ \xrightarrow[\beta]{}} \prod_{\substack{B\xrightarrow{g} A \\ A\xrightarrow{f} X\in S}} F(B)\right) = \lim_{\substack{\longleftarrow \\ S}} FU
\end{equation}
where the $fg$ component of $\alpha((x_f)_{f\in S})$ is $x_{fg}$ and of $\beta((x_f)_{f\in S})$ is $Fg(x_{f})$ \cite[see][Theorem 2, Section 2, Chapter V]{cwm}.

Let $\EuScript{U}$ be the universal colim sieve topology for the category $\EuScript{C}$ with $\EuScript{U}(X)$ the collection of universal colim sieves on $X$. In a similar vein, let $C$ be the canonical topology for $\EuScript{C}$. 
Let $rM$ denoted the representable presheaf on $M$, i.e. for all objects $K$ of $\EuScript{C}$, $rM(K) = \EuScript{C}(K,M)$.
We will show that the universal colim sieves form a ``larger topology'' than the canonical topology, i.e. $C(X) \subset \EuScript{U}(X)$ for all objects $X$, 
and that $\EuScript{U}$ is subcanonical, i.e. that $\EuScript{U}$ is a topology where all representable presheaves are sheaves.
This will prove the desired result because the canonical topology is the largest subcanonical topology.

To see that $C(X)\subset \EuScript{U}(X)$, let $S\in C(X)$, 
$f\colon Y\to X$ be a morphism 
and $M$ be an object in $\EuScript{C}$.
Since $f^\ast S\in C(Y)$ and $rM$ is a sheaf in the canonical topology, then it follows from the the sheaf condition and (\ref{star}) that
\begin{equation*}
rM(Y) \cong \lim_{\substack{\longleftarrow \\ f^\ast S}} (rM\circ U).
\end{equation*}
Thus by rewriting what $rM(-)$ means, we get
$$\EuScript{C}(Y,M)\cong \ds\lim_{g \in f^\ast S}\EuScript{C}\left(U(g), M \right)$$
for every object $M$.
This formally implies that $\colim_{f^\ast S}U$ exists and
$$\EuScript{C}(Y,M)\cong \EuScript{C}\left(\ds\colim_{f^\ast S} U, M \right)$$
for all objects $M$ of $\EuScript{C}$.
Now by Yoneda's Lemma, $Y\cong \colim_{f^\ast S} U$, i.e. $f^\ast S$ is a colim sieve. Therefore, every covering sieve in the canonical topology is a universal colim sieve. 

To see that $\EuScript{U}$ is subcanonical, let $M$ be any object in $\EuScript{C}$ and consider the representable presheaf $rM$. 
For any $T\in \EuScript{U}(X)$,
\begin{equation*}\begin{split}
rM(X) & \cong rM\left(\colim_{T} U\right) \\
& \cong  \lim_{\substack{\longleftarrow \\ T}} (rM\circ U) \\
& \cong
\Eq\left(\prod_{A\xrightarrow{f} X\in T} F(A) \substack{\xrightarrow{\alpha} \\ \xrightarrow[\beta]{}} \prod_{\substack{B\xrightarrow{g} A \\ A\xrightarrow{f} X\in T}} F(B)\right)
\end{split}\end{equation*}
where the first isomorphism is because $T$ is a colim sieve, the second isomorphism is a general property of $\Hom_\EuScript{C}(-,M)$, and third isomorphim is fact (\ref{star}). Since this is true for every universal colim sieve $T$ and object $X$, then $rM$ is a sheaf. Therefore, all representable presheaves are sheaves in the universal colim sieve topology. 
\end{proof}
\bigskip


\noindent\textsc{Basis}
\bigskip

Now, for a very specific type of category, we give a basis for the canonical topology.

\begin{prop}\label{reducing sieve generating set}
Let $\EuScript{C}$ be a cocomplete category with pullbacks. 
Futher assume that coproducts and pullbacks commute in $\EuScript{C}$.
Then a sieve of the form $S = \langle\{f_\alpha\colon A_\alpha\to X\}_{\alpha\in\EuScript{A}}\rangle$ is a (universal) colim sieve if and only if the sieve $T = \langle\{\coprod f_\alpha\colon \coprod_{\alpha\in\EuScript{A}} A_\alpha\to X\}\rangle$ is a (universal) colim sieve.
\end{prop}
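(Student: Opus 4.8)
The plan is to reduce both directions to the coequalizer presentation of a colimit over a generated sieve (Proposition \ref{colim is coeq}), show that the two sieves $S$ and $T$ have \emph{isomorphic} colimit diagrams (so their colimits agree compatibly with the canonical maps to $X$), and then bootstrap the universal statement from the non-universal one by pulling back.

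For the non-universal direction, I would write $A = \coprod_{\alpha\in\EuScript{A}} A_\alpha$. Since $\EuScript{C}$ has all pullbacks, Proposition \ref{colim is coeq} applies to both sieves (for $T$ the index set is a single element), giving
$$\colim_{S}{U} \cong \Coeq\left(\coprod_{(\alpha,\beta)\in\EuScript{A}\times\EuScript{A}} A_\alpha \times_X A_\beta \rightrightarrows \coprod_{\gamma\in\EuScript{A}} A_\gamma\right), \qquad \colim_{T}{U} \cong \Coeq\left(A \times_X A \rightrightarrows A\right).$$
The key computation is that the commutativity of coproducts and pullbacks (Remark \ref{stable implies coproducts and pullbacks commute}) yields $A \times_X A = \left(\coprod_\alpha A_\alpha\right)\times_X\left(\coprod_\beta A_\beta\right) \cong \coprod_{(\alpha,\beta)} A_\alpha\times_X A_\beta$. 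I would then check that under this isomorphism the two projections $A\times_X A \rightrightarrows A$ restrict, on the $(\alpha,\beta)$-summand, to $\pi_1$ and $\pi_2$ followed by the coproduct inclusions into $A$ --- which are exactly the two structure maps of the $S$-coequalizer. Hence the two parallel pairs are isomorphic as diagrams, their coequalizers are canonically isomorphic over $X$, and so $S$ is a colim sieve if and only if $T$ is.

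For the universal direction, I would apply the non-universal statement (whose hypotheses on $\EuScript{C}$ are global, so it holds on every object) after pulling back along an arbitrary $f\colon Y\to X$. By Lemma \ref{pb sieve gen set}, $f^\ast S = \langle\{A_\alpha\times_X Y\to Y\}_\alpha\rangle$ and $f^\ast T = \langle(\coprod_\alpha A_\alpha)\times_X Y\to Y\rangle$. Commutativity of coproducts and pullbacks again gives $(\coprod_\alpha A_\alpha)\times_X Y \cong \coprod_\alpha (A_\alpha\times_X Y)$, and the generator $\pi_2$ of $f^\ast T$ corresponds under this isomorphism to the copairing of the $\pi_2$'s, i.e.\ to the coproduct of the generators of $f^\ast S$. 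Thus $f^\ast T$ is precisely the ``coproduct version'' of $f^\ast S$ on the object $Y$, and the non-universal equivalence applied at $Y$ shows $f^\ast S$ is a colim sieve iff $f^\ast T$ is. Since this holds for every $f$, we conclude $S$ is a universal colim sieve if and only if $T$ is.

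The step I expect to be the main obstacle is verifying the compatibility of the projection maps with the coproduct/pullback interchange isomorphism: it is not enough that $A\times_X A$ and $\coprod_{(\alpha,\beta)} A_\alpha\times_X A_\beta$ are abstractly isomorphic objects; I need the two projections of the $T$-diagram to correspond summandwise to the $\pi_1,\pi_2$-induced maps of the $S$-diagram, so that the two coequalizer presentations match as parallel pairs rather than merely having isomorphic vertices. I would handle this by tracing the interchange isomorphism through the universal properties of the coproduct and the pullback and checking it against each inclusion $A_\alpha\hookrightarrow A$; the same bookkeeping then also settles the identification of generators used in the universal direction.
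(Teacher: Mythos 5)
Your proposal is correct and follows essentially the same route as the paper: the paper likewise combines Proposition \ref{colim is coeq}, Lemma \ref{pb sieve gen set}, and the coproduct/pullback interchange to identify the coequalizer presentations of $\colim_{f^\ast S}U$ and $\colim_{f^\ast T}U$ for an arbitrary $f\colon Y\to X$ (which handles the plain and universal cases simultaneously, rather than treating $f=\mathrm{id}_X$ first and then bootstrapping as you do). Your extra attention to checking that the interchange isomorphism matches the two parallel pairs summandwise, not just their vertices, is exactly the point the paper leaves implicit.
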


\begin{proof}
Fix $f\colon Y\to X$ and consider $f^\ast S$ and $f^\ast T$. Then
%
\begin{align*}
\colim_{f^\ast T}{U} & \cong \Coeq
\left(\begin{tikzcd}
\left(\left(\coprod_{\gamma\in\EuScript{A}} A_\gamma\right) \times_X Y\right)\times_Y \left(\left(\coprod_{\beta\in\EuScript{A}} A_\beta\right) \times_X Y\right)
\arrow[d, shift right = 2]
\arrow[d, shift left = 2] \\
\left(\coprod_{\alpha\in\EuScript{A}} A_\alpha\right) \times_X Y
\end{tikzcd}\right)\\
& \cong \Coeq
\left(\begin{tikzcd}
\left(\coprod_{\gamma\in\EuScript{A}} \left(A_\gamma \times_X Y\right) \right) \times_Y \left(\coprod_{\beta\in\EuScript{A}} \left(A_\beta \times_X Y\right)\right)
\arrow[d, shift right = 2]
\arrow[d, shift left = 2] \\
\coprod_{\alpha\in\EuScript{A}} \left(A_\alpha \times_X Y\right)
\end{tikzcd}\right)\\
& \cong \Coeq
\left(\begin{tikzcd}
\coprod_{\gamma,\beta\in\EuScript{A}}\left(\left( A_\gamma \times_X Y\right) \times_Y \left( A_\beta \times_X Y\right)\right)
\arrow[d, shift right = 2]
\arrow[d, shift left = 2] \\
\coprod_{\alpha\in\EuScript{A}} \left(A_\alpha \times_X Y\right)
\end{tikzcd}\right)\\
& \cong \colim_{f^\ast S}{U}
\end{align*}
by Lemma \ref{pb sieve gen set}, Proposition \ref{colim is coeq} and the commutativity of coproducts and pullbacks.
Therefore, $\colim_{f^\ast S}{U}\cong Y$ if and only if $\colim_{f^\ast T}{U}\cong Y$.
\end{proof}

\begin{thm}\label{hinting at basis for can top in special case}
Let $\EuScript{C}$ be a cocomplete category with pullbacks whose coproducts and pullbacks commute.
A sieve $S$ on $X$ is a (universal) colim sieve of $\EuScript{C}$ if and only if there exists some $\{A_\alpha\to X\}_{\alpha\in\EuScript{A}} \subset S$ where $\ds\coprod_{\alpha\in\EuScript{A}} A_\alpha \to X$ is a (universal) effective epimorphism.
\end{thm}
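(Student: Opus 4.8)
The plan is to prove both implications by reducing, via Proposition \ref{reducing sieve generating set} and Corollary \ref{eff epi's gen colim sieves}, to a statement about the subsieve $S' = \langle\{A_\alpha\to X\}_{\alpha\in\EuScript{A}}\rangle$ generated by the chosen family. Those two results convert the coproduct condition into a statement about $S'$: Corollary \ref{eff epi's gen colim sieves} says the single-generator sieve $\langle\coprod_\alpha A_\alpha \to X\rangle$ is a (universal) colim sieve exactly when $\coprod_\alpha A_\alpha\to X$ is a (universal) effective epimorphism, and Proposition \ref{reducing sieve generating set} says that single-generator sieve is a (universal) colim sieve exactly when $S'$ is. So the whole theorem becomes: $S$ is a (universal) colim sieve if and only if it contains a subsieve $S'$ of the displayed form that is a (universal) colim sieve.

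For the forward direction I would take the family to be all of $S$, i.e.\ $\EuScript{A} = S$ and $A_f = \mathrm{domain}(f)$ for $f\in S$; this family lies in $S$ and generates $S$, since every sieve is generated by its own members. Because $\EuScript{C}$ has all pullbacks the relevant fibre products exist, so Proposition \ref{reducing sieve generating set} and Corollary \ref{eff epi's gen colim sieves} apply and show that $S$ being a (universal) colim sieve forces $\coprod_{f\in S}\mathrm{domain}(f)\to X$ to be a (universal) effective epimorphism. This produces the required family with no further work.

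For the reverse direction, given a family $\{A_\alpha\to X\}\subseteq S$ whose coproduct map is a (universal) effective epimorphism, the two cited results again show that the generated subsieve $S'$ is a (universal) colim sieve. It then remains to upgrade this to $S$ itself, i.e.\ to show that a sieve containing a (universal) colim sieve is again one. In the universal case this is immediate from Theorem \ref{ucs is a top}: universal colim sieves form a Grothendieck topology, and in any Grothendieck topology a sieve $S\supseteq S'$ with $S'$ covering is itself covering. Indeed, for each $f\in S'\subseteq S$ one has $\mathrm{id}\in f^\ast S$, so $f^\ast S$ is the maximal sieve and hence covering by the maximality axiom; transitivity applied to $S'$ and $S$ then yields $S\in\EuScript{U}(X)$.

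The main obstacle is the analogous ``upgrade'' step in the non-universal case, where there is no topology to lean on and one must argue directly that $X$ satisfies the universal property of $\colim_S U$. A cocone on $U|_S$ restricts to a cocone on $U|_{S'}$, which induces a unique $h\colon X\to Z$; to check that $h$ is compatible with an arbitrary $f\colon A\to X$ in $S$ one precomposes with the projection $\bigl(\coprod_\alpha A_\alpha\bigr)\times_X A\to A$ and uses cocone-compatibility together with Lemma \ref{pb sieve gen set} and the commutativity of coproducts with pullbacks. This forces $\psi_f$ and $h\circ f$ to agree after that precomposition, and the verification closes precisely when the projection is an epimorphism. Since that projection is the pullback of $\coprod_\alpha A_\alpha\to X$ along $f$, it is automatically an effective epimorphism in the universal case, which reproves that case directly; the plain case is exactly where one must confirm this epimorphism condition, and I expect it to be the delicate heart of the argument.
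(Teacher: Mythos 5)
Your reduction is exactly the paper's: the published proof of this theorem consists of the single sentence ``It is an easy application of Proposition \ref{reducing sieve generating set}, Corollary \ref{eff epi's gen colim sieves} and Theorem \ref{ucs is a top},'' and your forward direction (take the family to be all of $S$, which generates $S$) together with your universal-case reverse direction (upward closure of covers in the Grothendieck topology of Theorem \ref{ucs is a top}, via the maximality and transitivity axioms) is precisely how those three results are meant to assemble.

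The step you leave open --- showing in the plain, non-universal case that a sieve $S$ containing the colim sieve $S'=\langle\{A_\alpha\to X\}_{\alpha}\rangle$ is itself a colim sieve --- is a genuine gap, and it is one the paper's one-line proof does not close either, since Theorem \ref{ucs is a top} supplies upward closure only for \emph{universal} colim sieves. Your diagnosis of where the argument pinches is correct. Writing $S=\langle\{g_\beta\}_{\beta}\rangle$ with $\{A_\alpha\}_\alpha\subseteq\{g_\beta\}_\beta$ and comparing the two coequalizers of Proposition \ref{colim is coeq}, one gets a splitting $X\to\colim_{S} U\to X$ of the canonical map, and promoting this to an isomorphism requires the two maps $\operatorname{dom}(g_\beta)\to\colim_{S}U$ (the coequalizer leg and the one factored through $X$) to agree; they are known to agree only after precomposition with the projection $\bigl(\coprod_\alpha A_\alpha\bigr)\times_X \operatorname{dom}(g_\beta)\to\operatorname{dom}(g_\beta)$, so the verification closes exactly when that projection is an epimorphism. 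Equivalently, via Proposition \ref{un eff epis are un strict epis}, one must show that strictness of $\coprod_\alpha A_\alpha\to X$ survives enlarging the domain to $\bigl(\coprod_\alpha A_\alpha\bigr)\amalg W\to X$. When $\coprod_\alpha A_\alpha\to X$ is a \emph{universal} effective epimorphism the projection is (effectively) epi and everything goes through, but a mere effective epimorphism need not have epimorphic pullbacks, so the non-universal clause of the reverse implication does not follow from the cited results and needs either an additional hypothesis or a separate argument. If you restrict to the universal statement --- which is the one actually invoked later in Theorem \ref{basis for can top in special case} --- your proof is complete and coincides with the paper's.
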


\begin{proof}
It is an easy application of Proposition \ref{reducing sieve generating set}, Corollary \ref{eff epi's gen colim sieves} and Theorem \ref{ucs is a top}.
\end{proof}

The above theorem shows us what our basis for the canonical topology should be; and indeed:

\begin{thm}\label{basis for can top in special case}
Let $\EuScript{C}$ be a cocomplete category with stable and disjoint coproducts and all pullbacks. 
For each $X$ in $\EuScript{C}$, define $K(X)$ by
$$\{A_\alpha\to X\}_{\alpha\in\EuScript{A}}\in K(X)\iff \coprod_{\alpha\in\EuScript{A}} A_\alpha \to X \text{ is a universal effective epimorphism.}$$
Then $K$ is a Grothendieck basis and generates the canonical topology on $\EuScript{C}$.
\end{thm}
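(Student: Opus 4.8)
The plan is to first verify directly that $K$ satisfies the three axioms of a Grothendieck basis (pretopology) --- the isomorphism, pullback-stability, and transitivity axioms --- and then to deduce that the topology generated by $K$ is the canonical topology by invoking the characterizations already established. Throughout I will use that $\EuScript{C}$ is cocomplete (so has an initial object) with all pullbacks and with disjoint and stable coproducts, which are exactly the hypotheses of Lemma \ref{coprod of eff epi's}.

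For the isomorphism axiom, I would observe that any isomorphism $f\colon Y\to X$ is a universal effective epimorphism (its pullback along any map is again an isomorphism, hence an effective epimorphism), so the singleton family $\{f\}$ lies in $K(X)$. For pullback-stability, suppose $\{A_\alpha\to X\}_{\alpha\in\EuScript{A}}\in K(X)$ and $g\colon Y\to X$ is arbitrary. The coproduct map for the pulled-back family is $\coprod_\alpha(A_\alpha\times_X Y)\to Y$, and by Remark \ref{stable implies coproducts and pullbacks commute} this is isomorphic over $Y$ to $\bigl(\coprod_\alpha A_\alpha\bigr)\times_X Y\to Y$, i.e.\ to the pullback of the universal effective epimorphism $\coprod_\alpha A_\alpha\to X$ along $g$. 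The key point I must check is that the pullback of a \emph{universal} effective epimorphism is again universal: this follows from the pullback-pasting lemma, since pulling this pullback back further along any $h\colon Z\to Y$ reproduces the pullback of $\coprod_\alpha A_\alpha\to X$ along $gh$, which is an effective epimorphism by universality. Hence the pulled-back family lies in $K(Y)$.

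For transitivity, suppose $\{A_\alpha\to X\}_{\alpha\in\EuScript{A}}\in K(X)$ and for each $\alpha$ a family $\{B_{\alpha\beta}\to A_\alpha\}_{\beta}\in K(A_\alpha)$. Using $\coprod_{\alpha,\beta}B_{\alpha\beta}\cong\coprod_\alpha\bigl(\coprod_\beta B_{\alpha\beta}\bigr)$, Lemma \ref{coprod of eff epi's} shows that $\coprod_{\alpha,\beta}B_{\alpha\beta}\to\coprod_\alpha A_\alpha$ is a universal effective epimorphism. Composing with the universal effective epimorphism $\coprod_\alpha A_\alpha\to X$ and applying Corollary \ref{eff epi composition}, which asserts that universal effective epimorphisms are closed under composition, I conclude that $\coprod_{\alpha,\beta}B_{\alpha\beta}\to X$ is a universal effective epimorphism; since this composite is precisely the coproduct map of the family $\{B_{\alpha\beta}\to X\}$, that family lies in $K(X)$. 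This establishes that $K$ is a Grothendieck basis.

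Finally, to see that $K$ generates the canonical topology, recall that the sieves in the topology generated by $K$ are exactly those sieves $S$ on $X$ containing some family in $K(X)$, i.e.\ some $\{A_\alpha\to X\}\subseteq S$ with $\coprod_\alpha A_\alpha\to X$ a universal effective epimorphism. By Theorem \ref{hinting at basis for can top in special case} (whose hypotheses hold here, since stable coproducts force coproducts and pullbacks to commute), these are precisely the universal colim sieves on $X$, and by Theorem \ref{can top is ucs} the universal colim sieves form the canonical topology. Thus $K$ generates the canonical topology. I expect the main obstacle to be the pullback-stability step, specifically the bookkeeping confirming that universality (not merely the effective-epimorphism property) is preserved under pullback; the transitivity step, by contrast, should reduce cleanly to the two cited results on coproducts and composition of universal effective epimorphisms.
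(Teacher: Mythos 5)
Your proposal is correct and follows essentially the same route as the paper: verify the three basis axioms using the closure of universal effective epimorphisms under isomorphism, pullback (via the commutation of coproducts and pullbacks from stability), coproduct (Lemma \ref{coprod of eff epi's}), and composition (Corollary \ref{eff epi composition}), then identify the generated topology with the canonical one via Theorem \ref{hinting at basis for can top in special case} and Theorem \ref{can top is ucs}. You simply supply more of the bookkeeping (e.g.\ the pasting-lemma argument for universality under pullback) than the paper's terser proof does.
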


\begin{proof}
We will use the universal colim sieve presentation (Theorem \ref{can top is ucs}).
For $K$ to be a basis we need three things:
\begin{enumerate}
\item $\{f\colon E\to X\}\in K(X)$ for every isomorphism $f$.
\item If $\{f_i\colon E_i\to X\}_{i\in I}\in K(X)$ and $g\colon Y\to X$, then $\{\pi_2\colon E_i\times_X Y\to Y\}_{i\in I}$ is in $K(Y)$
\item If $\{f_i\colon E_i\to X\}_{i\in I}\in K(X)$ and $\{g_{ij}\colon D_{ij}\to E_i\}_{j\in J_i}\in K(E_i)$ for each $i\in I$, then $\{f_i\circ g_{ij}\colon  D_{ij}\to X\}_{i\in I, j\in J_i}\in K(X)$.
\end{enumerate}

The first condition is true since isomorphisms are obviously universal effective epimorphisms.
The second condition follows from the fact that coproducts and pullbacks commute, and the assumed universal condition on $\coprod_{i\in I}E_i\to X$.
The third condition follows from Corollary \ref{eff epi composition} and Lemma \ref{coprod of eff epi's}.

Lastly, Theorem \ref{hinting at basis for can top in special case} showcases that this Grothendieck basis is indeed a basis for the canonical topology.
\end{proof}


\section{Universal Hocolim Sieves in the Category of Topological Spaces}\label{hcs in top}

In this section we explore some examples of universal hocolim sieves.
Let $\Delta$ be the cosimplicial indexing category;
in other words, the objects are the sets $[n] = \{0,\dots,n\}$ for $n>0$ and the morphisms are monotone increasing functions.
\bigskip

\noindent \textsc{Open Covers}
\bigskip

Let $X$ be a topological space with open cover $\EuScript{U}$.
Set
$$S(\EuScript{U}) \coloneqq \langle \{ V\subset X\ |\ V\in \EuScript{U}\} \rangle.$$ 
%
We will show that $S(\EuScript{U})$ is a universal hocolim sieve.

We start by recalling the \textit{\v{C}ech complex} $\check C (\EuScript{U})_\ast$ associated to the open cover $\EuScript{U}$.
This simplicial set is defined by $\check C (\EuScript{U})_n = \coprod V_{a_0}\cap\dots\cap V_{a_n}$ with the obvious face and degeneracy maps and $V_{a_i}\in \EuScript{U}$ for $i=0,\dots,n$.

Similarly, the \textit{\v{C}ech complex} of a set $B$ will be denoted by $\check C(B)_\ast$.
This simplicial set is defined by $\check C(B)_n = B^{n+1}$ with the obvious face and degeneracy maps. 
We remark that $\check C(B)_\ast$ is contracible (see \cite[][Proposition 3.12 and Example 3.14]{primer} and use $f\colon B\to\{\ast\}$). 

Additionally, for a simplicial set $K_\ast$ we define $\Delta(K_\ast)$ to be the Grothendieck construction for the functor $\gamma\colon \Delta\to\textbf{Sets}$ given by $[n]\mapsto K_n$.
In particular, $\Delta(K_\ast)$ is a category with objects
$([n],k)$ where $k\in K_n$.
We will abuse notation and write $k$ for the object $([n],k)$.

\begin{prop}\label{open covers are uhcs}
For any topological space $X$ and open cover $\EuScript{U}$, $S(\EuScript{U})$ is a universal hocolim sieve.
\end{prop}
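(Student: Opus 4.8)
The plan is to first dispatch universality essentially for free, then reduce the homotopy colimit over the unwieldy sieve $S(\EuScript{U})$ to a \v{C}ech realization, and finally compare that realization with $X$; the last comparison is the genuine content.

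First I would remove universality. Given any $\alpha\colon Y\to X$, Lemma \ref{pb sieve gen set} identifies $\alpha^\ast S(\EuScript{U})$ with the sieve generated by the projections $V\times_X Y\to Y$ for $V\in\EuScript{U}$. Since each $V\hookrightarrow X$ is an open inclusion, $V\times_X Y$ is just the open set $\alpha^{-1}(V)\subseteq Y$, and $\{\alpha^{-1}(V)\}_{V\in\EuScript{U}}$ is again an open cover of $Y$. Hence $\alpha^\ast S(\EuScript{U}) = S(\alpha^{-1}\EuScript{U})$ is a sieve of exactly the same type, so it suffices to prove the single statement that the canonical map $\hocolim{S(\EuScript{U})}{U}\to X$ is a weak equivalence for every space and every open cover; universality then follows automatically.

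Next I would compute $\hocolim{S(\EuScript{U})}{U}$ up to weak equivalence. The sieve is enormous, so I would pass to the \v{C}ech data: every finite intersection $V_{a_0}\cap\dots\cap V_{a_n}$ lies in $S(\EuScript{U})$ (it factors through $V_{a_0}\hookrightarrow X$), and these assemble into the simplicial space $\check C(\EuScript{U})_\ast$ together with the combinatorial category $\Delta(\check C(\EuScript{U})_\ast)$ set up just above. The goal of this step is to produce a functor out of $\Delta(\check C(\EuScript{U})_\ast)$ into $S(\EuScript{U})$ that is final in the homotopical sense, so that—using the Grothendieck-construction bookkeeping of Section \ref{section IF cat stuff} together with the cofinality input already invoked for Lemma \ref{vert maps are we in diagram}—the induced map on homotopy colimits is a weak equivalence. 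By definition of the \v{C}ech complex this identifies $\hocolim{S(\EuScript{U})}{U}\simeq |\check C(\EuScript{U})_\ast|$ compatibly with the augmentations to $X$, reducing the problem to showing $|\check C(\EuScript{U})_\ast|\to X$ is a weak equivalence.

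Finally I would treat that augmentation, which is the only place the \emph{openness} of the cover enters. The strategy is to verify the map fiberwise: over a point $x\in X$ the diagram collapses to the \v{C}ech complex of the nonempty index set $B_x=\{a : x\in V_a\}$, and $\check C(B_x)_\ast$ is contractible (exactly the contractibility recorded for $\check C(B)_\ast$ via $B_x\to\{\ast\}$). The main obstacle is precisely this last step: the reduction to the \v{C}ech diagram is formal once the final functor is in hand, and the contractibility of $\check C(B_x)_\ast$ is immediate, but promoting this pointwise contractibility to an honest global weak equivalence $|\check C(\EuScript{U})_\ast|\to X$ is where one must use that the $V_a$ are open and invoke a local-to-global argument in the spirit of \cite{primer}. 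I expect that to be the crux of the proposition.
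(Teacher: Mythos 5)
Your proposal follows essentially the same route as the paper's proof: universality is dispatched by noting that pulling back an open cover gives an open cover, a homotopy-final functor from the combinatorial \v{C}ech category into $S(\EuScript{U})$ identifies $\hocolim{S(\EuScript{U})}{U}$ with $\hocolim{\Delta}{\check C(\EuScript{U})_\ast}$, and everything then rests on the equivalence $\hocolim{\ }{\check C}(\EuScript{U})_\ast\simeq X$. The one caveat is the crux you flag at the end: the paper handles it simply by citing Theorem 1.1 of \cite{dugger2001hypercovers}, and you should do the same, since pointwise contractibility of the fibers of $|\check C(\EuScript{U})_\ast|\to X$ does not by itself yield a weak equivalence without the partition-of-unity/local-to-global machinery that that theorem already packages.
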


\begin{proof}
Let $A$ be an indexing set for the cover $\EuScript{U}$, i.e.\ elements of $\EuScript{U}$ take the form $V_a$ for some $a\in A$.
Let $\Gamma\colon \Delta(\check C(A)_\ast)\to S(\EuScript{U})$ be defined by $\Gamma(a_0,\dots,a_n)$ equals $(V_{a_0}\cap\dots\cap V_{a_n}\xrightarrow{\iota} X)$ where $\iota$ is the inclusion map.

First we show that $\Gamma$ is a homotopy final functor (as defined by \cite{primer}).
Indeed, for a fixed $(f\colon Y\to X)\in S(\EuScript{U})$, 
$(f\downarrow\Gamma)$ is $\Delta(\check C(T)_\ast)$ where $T$ is the set $\coprod_{V\in\EuScript{U}}\left(\textbf{Top}\downarrow X\right)(Y,V)$ (using Notation \ref{note hom}) -- to see this, notice that any object in $(f\downarrow\Gamma)$ can be viewed (for some $n$) as an element of 
\begin{align*}
\coprod_{(a_0,\dots,a_n)}\left(\textbf{Top}\downarrow X\right)(Y,V_{a_0}\cap\dots\cap V_{a_n}) &\cong  \coprod_{(a_0,\dots,a_n)}\prod_{i=0}^n\left(\textbf{Top}\downarrow X\right)(Y,V_{a_i})\\
&\cong \prod_{i=0}^n \coprod_{V\in\EuScript{U}}\left(\textbf{Top}\downarrow X\right)(Y,V) \\
& = T^{n+1}.
\end{align*}
Since $(f\colon Y\to X)\in S(\EuScript{U})$, then $f$ factors through some $V \in \EuScript{U}$ and so $T$ is nonempty.
Therefore, the nerve of $\Delta(\check C(T)_\ast)$ is weakly equivalent to $\check C(T)_\ast$, which is itself contracible.

Since $\Gamma$ is homotopy final, then by \cite[][``Cofinality Theorem'']{primer},
\begin{equation}\label{open cover equation}
\hocolim{\Delta(\check C(A)_\ast)}{U\Gamma}\xrightarrow{\simeq} \hocolim{S(\EuScript{U})}{U}\rightarrow X.
\end{equation}
To see that the composition is a weak equivalence, we use the fact that $\Delta(\check C(A)_\ast)$ is a Grothendieck construction and therefore by \cite[][Theorem 26.8]{GrothendieckConstruction},
\begin{align*}
\hocolim{\Delta(\check C(A)_\ast)}{U\Gamma}
&\simeq \hocolim{[n]\in\Delta}{\hocolim{\check C(A)_n}{U\Gamma}}\\
&\simeq \hocolim{\Delta}{\check C(\EuScript{U})_\ast}
\end{align*}
where the last weak equivalence comes from the fact that $\check C(A)_n$ is a discrete category and hence 
$$\hocolim{\check C(A)_n}{U\Gamma}\xrightarrow{\simeq} \colim_{\check C(A)_n}{U\Gamma} = \coprod_{A^{n+1}}V_{a_0}\cap\dots\cap V_{a_n} = \check C(\EuScript{U})_n.$$
But by \cite[][Theorem 1.1]{dugger2001hypercovers}, $\hocolim{\ }{\check C} (\EuScript{U})_\ast \simeq X$.
Therefore, both the left map and the composition in (\ref{open cover equation}) are weak equivalences, which implies that the right map is too.

Universality follows immediately from Lemma \ref{pb sieve gen set} and the fact that the pullback on an open cover is an open cover.
\end{proof}

\bigskip

\noindent \textsc{Simplices Mapping into $X$}
\bigskip

For a topological space $X$, set
$$\Delta(X) \coloneqq \{ \Delta^n \to X\ |\ n\in \mathbb{Z}_{\geq 0}\},$$
%
i.e. all of the maps in $(\textbf{Top}\downarrow X)$ whose domain is a simplex. We will show that $\langle\Delta(X)\rangle$ is a universal hocolim sieve. First we recall a useful result from \cite[][Proposition 22.5]{primer}:

\begin{prop}\label{simplices hocolim}
For every topological space $X$, $\hocolim{\Delta(X)}{U}\to X$ is a weak equivalence.
\end{prop}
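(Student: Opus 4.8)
The plan is to compare the indexing category $\Delta(X)$ with the category of singular simplices of $X$ and then invoke the classical weak equivalence $|\mathrm{Sing}(X)|\to X$, mirroring the structure of the open-cover argument above. Write $\mathrm{Sing}(X)$ for the singular simplicial set, $\mathrm{Sing}(X)_n = \textbf{Top}(\Delta^n,X)$, and form its simplex category $\Delta(\mathrm{Sing}(X)_\ast)$ (the Grothendieck construction of $[n]\mapsto \mathrm{Sing}(X)_n$, exactly as in the paragraph preceding this proposition). I would define a functor $\Gamma\colon \Delta(\mathrm{Sing}(X)_\ast)\to \Delta(X)$ by $([n],\sigma)\mapsto (\sigma\colon \Delta^n\to X)$, sending a simplicial operator $\theta$ to the induced affine map $|\theta|\colon \Delta^m\to\Delta^n$. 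Note that $\Gamma$ is a bijection on objects but hits only the affine maps over $X$, whereas $\Delta(X)$ contains all continuous maps over $X$.

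First I would compute the source homotopy colimit. Because $\Delta(\mathrm{Sing}(X)_\ast)$ is a Grothendieck construction and each level $\mathrm{Sing}(X)_n$ is a discrete category, the same two steps used in the open-cover proof --- \cite[][Theorem 26.8]{GrothendieckConstruction} together with the fact that a homotopy colimit over a discrete category is a coproduct --- give
\begin{equation*}
\hocolim{\Delta(\mathrm{Sing}(X)_\ast)}{U\Gamma} \simeq \hocolim{[n]\in\Delta}{\coprod_{\mathrm{Sing}(X)_n}\Delta^n} \simeq |\mathrm{Sing}(X)|,
\end{equation*}
the last step by Reedy cofibrancy of the simplicial space $[n]\mapsto\coprod_{\mathrm{Sing}(X)_n}\Delta^n$, just as in Lemma \ref{cyl obj}. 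The canonical augmentation of this homotopy colimit to $X$ is exactly the realization map $|\mathrm{Sing}(X)|\to X$, which is a weak equivalence by the standard property of singular realization. Thus $\hocolim{\Delta(\mathrm{Sing}(X)_\ast)}{U\Gamma}\xrightarrow{\simeq} X$.

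Second, I would establish that $\Gamma$ is homotopy final. Granting this, the Cofinality Theorem of \cite{primer} yields a weak equivalence $\hocolim{\Delta(\mathrm{Sing}(X)_\ast)}{U\Gamma}\xrightarrow{\simeq}\hocolim{\Delta(X)}{U}$, and since both objects sit in a commutative triangle over $X$, the augmentation $\hocolim{\Delta(X)}{U}\to X$ is forced to be a weak equivalence, completing the proof.

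The hard part will be precisely this finality: for each object $(f\colon \Delta^k\to X)$ of $\Delta(X)$ one must show that the comma category $(f\downarrow\Gamma)$ has contractible nerve. Its objects are factorizations $\Delta^k\xrightarrow{\alpha}\Delta^n\xrightarrow{\sigma}X$ of $f$ with $\alpha$ continuous and $\sigma$ a singular simplex. Unlike the open-cover case --- where the analogous comma category was a \v{C}ech construction $\Delta(\check C(T)_\ast)$ on a nonempty set $T$ and hence contractible via an obvious cone --- here the presence of the non-affine maps in $\Delta(X)$ means there is no evident initial object, so contractibility cannot be read off formally and must instead be proven by a genuine geometric deformation (exploiting the convexity of the target simplices, together with a last-vertex or simplicial-approximation argument to straighten the chosen factorizations). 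This contractibility is the technical heart of the statement.
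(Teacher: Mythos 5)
A preliminary remark: the paper does not actually prove this proposition --- it is quoted from Dugger's primer (the citation \cite[][Proposition 22.5]{primer}) and used as a black box --- so there is no in-paper argument to compare yours against, and your proposal has to stand on its own. The first half of it does: identifying $\hocolim{\Delta(\mathrm{Sing}(X)_\ast)}{U\Gamma}$ with $|\mathrm{Sing}(X)|$ and the augmentation with the counit $|\mathrm{Sing}(X)|\to X$ is correct and routine (a shortcut: $U\Gamma$ is objectwise contractible, so the homotopy colimit is weakly equivalent to the nerve of $\Delta(\mathrm{Sing}(X)_\ast)$, which is weakly equivalent to $|\mathrm{Sing}(X)|$). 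But that is not where the content of the proposition lives, and you say so yourself.

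The genuine gap is the homotopy finality of $\Gamma$, which you assert and defer rather than prove; as written, the entire difficulty of the proposition has been relocated into this unproved claim. For $f\colon\Delta^k\to X$ the comma category $(f\downarrow\Gamma)$ has as objects factorizations $\Delta^k\xrightarrow{\alpha}\Delta^n\xrightarrow{\sigma}X$ with $\alpha$ an arbitrary continuous map but as morphisms only cosimplicial operators in the target; there is no initial object and no evident cone, and the techniques you gesture at do not obviously apply in the required form: simplicial approximation subdivides the \emph{source} $\Delta^k$, which is not a move available inside this comma category, and convexity of the target simplices produces homotopies of maps into $\Delta^n$, not zig-zags of cosimplicial operators. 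Note also that finality is strictly stronger than what you need --- it must make the comparison map an equivalence for \emph{every} diagram, not just $U$ --- so you have traded the proposition for an unproved statement that is, if anything, harder. To make this a proof you must either supply a genuine argument for the contractibility of the nerve of each $(f\downarrow\Gamma)$, or abandon the cofinality route and compare the two homotopy colimits by other means; absent that, the honest course is the paper's, namely to cite \cite[][Proposition 22.5]{primer}.
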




\begin{prop}\label{contain simplices imply hcs}
Any sieve $R$ on $X$ that contains $\Delta(X)$ is a hocolim sieve.
\end{prop}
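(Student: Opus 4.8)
The plan is to compare $\hocolim{R}{U}$ with $\hocolim{\Delta(X)}{U}$, which is weakly equivalent to $X$ by Proposition \ref{simplices hocolim}. One cannot do this with the inclusion $\Delta(X)\hookrightarrow R$ directly: a general $f\colon Y\to X$ in $R$ need not factor through any simplex, so the relevant comma categories are empty and the inclusion fails to be homotopy final. Instead I would route the comparison through a Grothendieck construction that ``resolves'' every object of $R$ by its own simplices.

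Explicitly, define $H\colon R\to Cat$ by $H(f\colon Y\to X)=\Delta(Y)$ (the simplices mapping into the domain of $f$), with a morphism $u\colon Y\to Y'$ of $R$ acting by postcomposition $\Delta(Y)\to\Delta(Y')$, and set $E=\text{Gr}(H)$; its objects are the pairs $(f,\sigma)$ with $f\colon Y\to X$ in $R$ and $\sigma\colon\Delta^n\to Y$. Let $U$ also denote the functor $E\to\textbf{Top}$ sending $(f,\sigma)$ to the domain $\Delta^n$ of $\sigma$. The first step is to show $\hocolim{E}{U}\simeq\hocolim{R}{U}$: by the Grothendieck construction theorem \cite[][Theorem 26.8]{GrothendieckConstruction}, $\hocolim{E}{U}\simeq\hocolim{f\in R}{\hocolim{\Delta(Y)}{U}}$, and the inner homotopy colimit is naturally (in $f$) weakly equivalent to $Y=U(f)$ by Proposition \ref{simplices hocolim} applied to the space $Y$; since homotopy colimits preserve objectwise weak equivalences, this yields $\hocolim{E}{U}\simeq\hocolim{R}{U}$.

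The second step identifies $\hocolim{E}{U}$ with $X$. Consider the projection $p\colon E\to\Delta(X)$, $(f,\sigma)\mapsto f\circ\sigma$; since $U$ on $E$ is the composite of $p$ with the forgetful functor $U\colon\Delta(X)\to\textbf{Top}$, it suffices to show $p$ is homotopy final and then invoke the Cofinality Theorem \cite[][``Cofinality Theorem'']{primer} together with Proposition \ref{simplices hocolim}. This is exactly where the hypothesis $\Delta(X)\subset R$ enters: because every simplex $g\colon\Delta^m\to X$ is itself an object of $R$, the assignment $g\mapsto(g,\mathrm{id}_{\Delta^m})$ defines a section $s$ of $p$. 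For a fixed $g$ in $\Delta(X)$ the comma category $(g\downarrow p)$ then has $\big((g,\mathrm{id}_{\Delta^m}),\mathrm{id}_g\big)$ as an initial object — one checks that any object $\big((f,\sigma),\,v\colon g\to f\sigma\big)$ receives a unique map from it, forced by $v$ — so its nerve is contractible and $p$ is homotopy final (equivalently, $s$ is left adjoint to $p$). Hence $\hocolim{E}{U}\simeq\hocolim{\Delta(X)}{U}\simeq X$, and combining with the first step gives $\hocolim{R}{U}\simeq X$; since every equivalence here is compatible with the canonical maps to $X$, the canonical map $\hocolim{R}{U}\to X$ is a weak equivalence and $R$ is a hocolim sieve.

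The main obstacle is the verification that $p$ is homotopy final — i.e.\ that each $(g\downarrow p)$ has an initial object — which is the one place that genuinely uses $\Delta(X)\subset R$ and which repairs the failure of the naive inclusion $\Delta(X)\hookrightarrow R$ to be final. The bookkeeping in the first step (checking that the fiberwise equivalences of Proposition \ref{simplices hocolim} are natural in $f$, so that the Grothendieck construction theorem applies) is routine but should be stated with care.
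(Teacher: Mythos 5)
Your argument is correct and is essentially the paper's: the paper applies \cite[][Theorem 6.9]{primer} to the inclusion $\alpha\colon\Delta(X)\to R$ after observing that each overcategory $(\alpha\downarrow f)$ is $\Delta(\text{domain}\ f)$ and invoking Proposition \ref{simplices hocolim} objectwise, which is exactly the content of your fiberwise Step 1 (your $E$ is $\text{Gr}(f\mapsto(\alpha\downarrow f))$, so your two steps amount to unwinding that theorem in this instance). The only genuinely new bookkeeping on your end is the explicit verification that $p\colon E\to\Delta(X)$, $(f,\sigma)\mapsto f\sigma$, is homotopy final via the initial objects $\bigl((g,\mathrm{id}),\mathrm{id}_g\bigr)$, which is correct and correctly isolates where $\Delta(X)\subset R$ is used.
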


\begin{proof}
Consider the inclusion functor $\alpha\colon \Delta(X)\to R$ and, for each $f\in R$, the natural morphism
$$\chi_f\colon\hocolim{(\alpha\downarrow f)}{U\mu_f}\to U(f)$$
%
where $\mu_f\colon (\alpha\downarrow f)\to R$ is the functor $(i,i\to f)\mapsto i$.

Notice that $(\alpha\downarrow f)$ and $\Delta(\text{domain}\ f)$ are equivalent categories.
Additionally, for all $(i,i\to f)\in (\alpha\downarrow f)$, $U\mu_f(i,i\to f) = \text{domain}\ i$.
Thus
$$\hocolim{(\alpha\downarrow f)}{U\mu_f} = \hocolim{\Delta(\text{domain}\ f)}{U}.$$
%
By Proposition \ref{simplices hocolim}, $\hocolim{\Delta(\text{domain}\ f)}{U} \to (\text{domain}\ f)$ is a weak equivalence.
Hence $\chi_f$ is a weak equivalence for all $f\in R$.

The above two paragraphs put us squarely in the hypotheses of \cite[][Theorem 6.9]{primer}, which means
we may now conclude that
$$\alpha_\#\colon\hocolim{\Delta(X)}{U\alpha}\to\hocolim{R}{U}$$
%
is a weak equivalence.
Moreover, up to abuse of notation, $U\alpha = U$, which by Proposition \ref{simplices hocolim} implies that $\hocolim{\Delta(X)}{U\alpha} \to X$ is a weak equivalence.
Thus in the composition
$$\hocolim{\Delta(X)}{U\alpha}\xrightarrow{\alpha_\#}\hocolim{R}{U}\rightarrow X$$
%
both the first arrow and the composition itself 
are weak equivalences.
Therefore $\hocolim{R}{U}\to X$ is also a weak equivalence.
\end{proof}

\begin{cor}\label{simplices are uhcs}
For any topological space $X$, $\langle\Delta(X)\rangle$ is a universal hocolim sieve.
\end{cor}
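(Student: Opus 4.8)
The plan is to unwind the definition of \emph{universal} hocolim sieve and reduce the universality to Proposition \ref{contain simplices imply hcs}, which already disposes of any single sieve containing the simplices. By Definition \ref{def of hs and uhs}, to show that $\langle\Delta(X)\rangle$ is a universal hocolim sieve I must verify that for every arrow $\alpha\colon Y\to X$ in $\textbf{Top}$, the pulled-back sieve $\alpha^\ast\langle\Delta(X)\rangle$ is a hocolim sieve on $Y$. So the whole statement becomes a family of instances of ``this sieve is a hocolim sieve,'' one for each $\alpha$, and Proposition \ref{contain simplices imply hcs} tells me exactly which sieves on $Y$ I already know to be hocolim sieves: any sieve containing $\Delta(Y)$.

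Hence the key step is to show the containment $\Delta(Y)\subseteq\alpha^\ast\langle\Delta(X)\rangle$ for each $\alpha\colon Y\to X$. This is where I would do the only real (and very short) computation. Recall that, by the definition of the pullback sieve, a map $g$ with codomain $Y$ lies in $\alpha^\ast\langle\Delta(X)\rangle$ precisely when $\alpha\circ g\in\langle\Delta(X)\rangle$. Now take any generator $\sigma\colon\Delta^n\to Y$ of $\langle\Delta(Y)\rangle$. The composite $\alpha\circ\sigma\colon\Delta^n\to X$ is again a map out of a simplex, so $\alpha\circ\sigma\in\Delta(X)\subseteq\langle\Delta(X)\rangle$; therefore $\sigma\in\alpha^\ast\langle\Delta(X)\rangle$. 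Since this holds for every simplex $\sigma$ mapping into $Y$, we conclude $\Delta(Y)\subseteq\alpha^\ast\langle\Delta(X)\rangle$.

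With the containment in hand, Proposition \ref{contain simplices imply hcs}, applied to the space $Y$ and the sieve $R=\alpha^\ast\langle\Delta(X)\rangle$, shows that $\alpha^\ast\langle\Delta(X)\rangle$ is a hocolim sieve on $Y$. As $\alpha$ was an arbitrary arrow into $X$, this is exactly the universality condition, so $\langle\Delta(X)\rangle$ is a universal hocolim sieve. I do not expect a genuine obstacle here: the argument is formal once Proposition \ref{contain simplices imply hcs} is available, and the ``hard part'' collapses to the observation that precomposing a simplex $\Delta^n\to Y$ with $\alpha$ yields a simplex $\Delta^n\to X$. The only point demanding a moment's care is using the \emph{definition} of the pullback sieve (rather than the generating-set description of Lemma \ref{pb sieve gen set}), so that one sees directly that \emph{all} simplices of $Y$, not merely those arising from pullbacks of the $\Delta^n\to X$, are captured.
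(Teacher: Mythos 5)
Your proposal is correct and follows the same route as the paper: for each $\alpha\colon Y\to X$ one checks $\Delta(Y)\subseteq\alpha^\ast\langle\Delta(X)\rangle$ (which the paper states as ``clearly'' and you spell out via the definition of the pullback sieve) and then invokes Proposition \ref{contain simplices imply hcs}. No gaps.
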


\begin{proof}
Let $f\colon Y\to X$ and consider $f^\ast\langle\Delta(X)\rangle$.
Clearly, $\Delta(Y)\subset f^\ast\langle\Delta(X)\rangle$.
Therefore, by Proposition \ref{contain simplices imply hcs}, $f^\ast\langle\Delta(X)\rangle$ is a hocolim sieve.
\end{proof}

Additionally, we remark that $\langle\Delta(X)\rangle$ is a colim sieve if and only if $X$ is a Delta-generated space.
Since not every space is Delta-generated, then for such an $X$, $\langle\Delta(X)\rangle$ is an example of a sieve in the homotopical canonical topology that is not in the canonical topology.

\begin{cor}
Let $\EuScript{U}$ be an open cover $X$. Let $R = \langle\{\Delta^n\to V\subset X\,|\, V\in\EuScript{U}\}\rangle$, i.e.\, $R$ is generated by the ``$\EuScript{U}$-small'' simplices. Then $R$ is a universal hocolim sieve.
\end{cor}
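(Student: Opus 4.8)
The plan is to deduce this from the transitivity axiom of the homotopical canonical topology (Theorem \ref{uhcs is a top}), using the open-cover sieve $S(\EuScript{U})$ as the ``outer'' cover. By Proposition \ref{open covers are uhcs}, $S(\EuScript{U})$ is a universal hocolim sieve on $X$, so transitivity guarantees that $R \in \EuScript{U}(X)$ as soon as we verify that $f^\ast R$ is a universal hocolim sieve on $\text{domain}(f)$ for every $f \in S(\EuScript{U})$. Thus the entire proof reduces to checking this one hypothesis.

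The key structural fact that makes the restriction to $S(\EuScript{U})$ useful is that every $f \in S(\EuScript{U})$ factors through the inclusion of a single open set: since $S(\EuScript{U}) = \langle\{V\subset X\,|\,V\in\EuScript{U}\}\rangle$, we may write $f = (V\hookrightarrow X)\circ g$ for some $V\in\EuScript{U}$ and some $g\colon W\to V$, where $W = \text{domain}(f)$. To show $f^\ast R$ is a universal hocolim sieve on $W$, I would appeal directly to Definition \ref{def of hs and uhs} and check that for every $\beta\colon W'\to W$ the sieve $\beta^\ast(f^\ast R) = (f\beta)^\ast R$ is a hocolim sieve. The point is that $f\beta$ again factors through $V\hookrightarrow X$, so for any simplex $\tau\colon \Delta^m\to W'$ the composite $f\beta\tau\colon \Delta^m\to X$ has image contained in $V$ and is therefore literally one of the chosen generators $\{\Delta^n\to V\subset X\}$ of $R$. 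Hence $f\beta\tau\in R$, i.e.\ $\tau\in(f\beta)^\ast R$, which shows $\Delta(W')\subseteq (f\beta)^\ast R$. Proposition \ref{contain simplices imply hcs} then gives that $(f\beta)^\ast R$ is a hocolim sieve. As $\beta$ was arbitrary, $f^\ast R$ is a universal hocolim sieve, the transitivity hypothesis is satisfied, and the corollary follows.

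The main obstacle to be aware of — and the reason the naive argument ``$R$ contains enough simplices, so apply Proposition \ref{contain simplices imply hcs} directly'' fails — is that $R$ need not contain $\Delta(X)$, and for a \emph{general} map $\alpha\colon Y\to X$ the pullback $\alpha^\ast R$ need not contain $\Delta(Y)$: a simplex $\Delta^m\to Y\to X$ may have image meeting several members of $\EuScript{U}$ and so fail to factor through any single $V$, in which case it is not a generator of $R$. Routing through transitivity with the cover $S(\EuScript{U})$ is precisely the device that cuts the class of maps we must test down to those $f$ (and their further restrictions $f\beta$) which \emph{do} factor through one open set; for exactly those maps every relevant simplex lands among the generators of $R$, and Proposition \ref{contain simplices imply hcs} becomes applicable.
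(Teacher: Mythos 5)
Your proposal is correct and follows essentially the same route as the paper: reduce via the transitivity axiom over $S(\EuScript{U})$, factor each $f\in S(\EuScript{U})$ through a single $V\in\EuScript{U}$, and observe that all simplices mapping into the restricted base then land among the generators of $R$. The only cosmetic difference is that the paper verifies universality of $f^\ast R$ by showing $i_W^\ast R\supseteq\langle\Delta(W)\rangle$ and citing Corollary \ref{simplices are uhcs} together with upward-closure of covers, whereas you unwind the definition and apply Proposition \ref{contain simplices imply hcs} to each further pullback $(f\beta)^\ast R$ directly; both are valid and rest on the same observation.
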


\begin{proof}
We will use the transitivity axiom from the definition of Grothendieck topology with $S(\EuScript{U})$, which
by Proposition \ref{open covers are uhcs} is in the homotopical canonical topology.
So 
we only need to show that $f^\ast R$ is a universal hocolim sieve for every $f\in S(\EuScript{U})$.

Fix $(f\colon Y\to X)\in S(\EuScript{U})$.
Then $f$ factors as $Y\xrightarrow{g} W\xrightarrow{i_W} X$ for some $W\in\EuScript{U}$ and inclusion map $i_W$.
Consider $i_W^\ast R = \langle\{\Delta^n\times_X W \to W\cap V \subset W\,|\, V\in\EuScript{U}\}\rangle$ (see Lemma \ref{pb sieve gen set}).
Notice that for any $(\Delta^n\to X)\in R$ that factors through $V\in\EuScript{U}$, $\Delta^n\times_X W \cong \Delta^n\times_V (W\cap V)$ --
now we apply the case $V=W$ to see that $\{\Delta^n\to W\}$ is part of $i_W^\ast R$'s generating set.
Therefore $\langle\Delta(W)\rangle\subset i_W^\ast R$.
But by Corollary \ref{simplices are uhcs}, $\langle\Delta(W)\rangle$ is in the homotopical canonical topology.
Since the homotopical canonical topology is a Grothendieck topology, then any sieve containing a cover is itself a cover.
Thus $i_W^\ast R$ is a universal hocolim sieve.
Hence $f^\ast R = g^\ast (i_W^\ast R)$ is a universal hocolim sieve.
\end{proof}

\bigskip

\noindent \textsc{Monogenic Sieves}
\bigskip

A sieve is called \textit{monogenic} if it can be generated by one morphism.
For $f\colon Y\to X$, let $\check C(f)_\ast$ be the \textit{\v{C}ech complex} on $f$.
In other words, $\check C(f)$ is the simplicial object of $\EuScript{M}$ defined by $\check C(f)_n = Y\times_X\dots \times_X Y$, i.e. the pullback of the $n$-tuple $(Y,\dots,Y)$ over $X$, with the obvious face and degeneracy maps.

\begin{prop}\label{reduction for monogenic}
For a simplicial model category $\EuScript{M}$,
let $S = \langle\{f\colon Y\to X\}\rangle$ be a sieve on $X$.
Then
$$\hocolim{S}{U} \simeq \hocolim{\ }{\check C}(f)_\ast.$$
\end{prop}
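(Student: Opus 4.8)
The plan is to exhibit a homotopy-final functor from the category $\Delta(\check C(f)_\ast)$ (the Grothendieck construction on the \v{C}ech complex of $f$, viewed as a simplicial set) into the monogenic sieve $S$, and then run the same Grothendieck-construction computation used in the proof of Proposition \ref{open covers are uhcs}. The point is that a monogenic sieve $S = \langle\{f\colon Y\to X\}\rangle$ consists of all morphisms into $X$ that factor through $f$, so it is a close analog of the open-cover situation with the single map $f$ playing the role of the cover.

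First I would define a functor $\Gamma\colon \Delta(\check C(f)_\ast)\to S$ sending an object $([n], \text{pt of }\check C(f)_n)$ to the composite $Y\times_X\dots\times_X Y \to X$ (the $(n{+}1)$-fold fiber product mapping into $X$ via any of the projections followed by $f$; these all agree by the pullback). I expect the key step to be checking that $\Gamma$ is homotopy final in the sense of \cite{primer}, i.e.\ that for each $(g\colon Z\to X)\in S$ the comma category $(g\downarrow\Gamma)$ has contractible nerve. The main obstacle is precisely this computation: I would identify $(g\downarrow\Gamma)$ with $\Delta(\check C(T)_\ast)$ where $T = (\textbf{Top}\downarrow X)(Z,Y)$ is the set of factorizations of $g$ through $f$, mirroring the identification $\coprod_{(a_0,\dots,a_n)}(\textbf{Top}\downarrow X)(Z, \check C(f)_n)\cong T^{n+1}$ that appears in Proposition \ref{open covers are uhcs}. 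Since $g\in S$ means $g$ factors through $f$, the set $T$ is nonempty, so $\check C(T)_\ast$ is contractible and hence the nerve of $(g\downarrow\Gamma)$ is too.

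Once homotopy finality is established, the Cofinality Theorem of \cite{primer} gives a weak equivalence
$$\hocolim{\Delta(\check C(f)_\ast)}{U\Gamma}\xrightarrow{\simeq} \hocolim{S}{U}.$$
It then remains to identify the left-hand homotopy colimit. Because $\Delta(\check C(f)_\ast)$ is a Grothendieck construction, \cite[][Theorem 26.8]{GrothendieckConstruction} yields
$$\hocolim{\Delta(\check C(f)_\ast)}{U\Gamma}\simeq \hocolim{[n]\in\Delta}{\hocolim{\check C(f)_n}{U\Gamma}},$$
and since each $\check C(f)_n$ is a discrete category, the inner homotopy colimit collapses to the coproduct $U\Gamma$ over the discrete index, recovering exactly $\check C(f)_n = Y\times_X\dots\times_X Y$. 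Hence the iterated homotopy colimit is $\hocolim{\ }{\check C}(f)_\ast$, and composing the two weak equivalences gives $\hocolim{S}{U}\simeq \hocolim{\ }{\check C}(f)_\ast$ as claimed.

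I would note that the only genuinely new verification relative to the open-cover case is that the single fiber product $Y\times_X\dots\times_X Y$ correctly plays the role that $\coprod V_{a_0}\cap\dots\cap V_{a_n}$ played before; everything downstream is a structural repetition of the earlier argument. The hard part, as indicated, is the contractibility of $(g\downarrow\Gamma)$, which hinges on showing $T$ is nonempty for every $g\in S$ --- and this is guaranteed precisely by the generation hypothesis on the monogenic sieve.
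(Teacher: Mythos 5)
Your overall strategy is the same as the paper's: build a functor from a $\Delta$-shaped index category onto the objects $\check C(f)_n\to X$ of $S$, prove it is homotopy final by identifying each comma category with $\Delta(\check C(T)_\ast)$ for the nonempty set $T=(\textbf{Top}\downarrow X)(Z,Y)$ of factorizations, and then invoke the Cofinality Theorem of \cite{primer}. The paper's sketch does exactly this, but with the index category taken to be $\Delta$ itself, via $\Gamma\colon\Delta\to S$, $[n]\mapsto(\check C(f)_n\to X)$.

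The one genuine problem is your choice of index category. You form $\Delta(\check C(f)_\ast)$, ``the Grothendieck construction on the \v{C}ech complex of $f$, viewed as a simplicial set.'' But $\check C(f)_\ast$ is a simplicial object of $\EuScript{M}$ (a simplicial space), not a simplicial set, so the construction $\Delta(-)$ as defined in the paper does not apply to it; and if you interpret it via underlying point-sets, both of your key identifications break. The fiber of the Grothendieck construction over $[n]$ would then be the discrete category on the \emph{points} of $Y\times_X\dots\times_X Y$, so the inner homotopy colimit in the \cite{GrothendieckConstruction} decomposition would be the coproduct of one copy of $\check C(f)_n$ for each of its points, not $\check C(f)_n$ itself; likewise $(g\downarrow\Gamma)$ would acquire an extra factor of that point-set at each level and would no longer be $\Delta(\check C(T)_\ast)$. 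The correct analog of the open-cover argument is to note that the ``indexing set'' of the single map $f$ is a one-point set, so the relevant Grothendieck construction collapses to $\Delta$: take $\Gamma\colon\Delta\to S$ with $\Gamma[n]=(\check C(f)_n\to X)$, after which $U\Gamma$ \emph{is} the diagram $\check C(f)_\ast$ and no inner homotopy colimit is needed. With that repair, the remainder of your argument (nonemptiness of $T$ from the generation hypothesis, contractibility of $\check C(T)_\ast$, and the Cofinality Theorem) matches the paper's proof.
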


\begin{proof}[Sketch of Proof.]
This proof is similar to the proof of Proposition \ref{open covers are uhcs}.
Basically, $\Gamma\colon \Delta\to S$ defined by $[n]\mapsto (\check C(f)_n\to X)$ is homotopy final, which completes the proof.
Indeed, for any $(g\colon Z\to X)\in S$, $(g\downarrow\Gamma)$ is $\Delta(\check C(K)_\ast)$ where $K$ is the set $(\textbf{Top}\downarrow X)\left( Z, Y \right)$, which is both nonempty and contractible.
\end{proof}

\begin{prop}
If $f$ is locally split, then the sieve generated by $f$ is a universal hocolim sieve.
\end{prop}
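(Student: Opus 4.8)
The plan is to deduce the statement from the transitivity axiom of the homotopical canonical topology (Theorem \ref{uhcs is a top}), using the open cover that witnesses local splitness as the ``outer'' cover, exactly in the style of the preceding corollary on $\EuScript{U}$-small simplices. First I would unpack the hypothesis: to say that $f\colon Y\to X$ is locally split means there is an open cover $\EuScript{U}$ of $X$ such that for every $V\in\EuScript{U}$ the pullback projection $\pi_2\colon V\times_X Y\to V$ admits a section $s\colon V\to V\times_X Y$, equivalently the inclusion $V\hookrightarrow X$ lifts along $f$. By Proposition \ref{open covers are uhcs} the sieve $S(\EuScript{U})$ is a universal hocolim sieve, so it is a cover in our topology and is available as the $S$ in the transitivity axiom.

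Set $R=\langle\{f\colon Y\to X\}\rangle$. By transitivity, to conclude $R\in\EuScript{U}(X)$ it suffices to show that $g^\ast R$ is a universal hocolim sieve for every $g\in S(\EuScript{U})$. So I would fix such a $g\colon Z\to X$. Since $g\in S(\EuScript{U})$, it factors as $Z\xrightarrow{h} V\xrightarrow{i_V} X$ for some $V\in\EuScript{U}$, and therefore $g^\ast R=h^\ast\bigl(i_V^\ast R\bigr)$.

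The key observation, and the one step I would state carefully, is that $i_V^\ast R$ is the \emph{maximal} sieve on $V$. Indeed, by Lemma \ref{pb sieve gen set} we have $i_V^\ast R=\langle V\times_X Y\xrightarrow{\pi_2} V\rangle$, and the section $s$ supplied by local splitness exhibits $\text{id}_V=\pi_2\circ s$ as a factorization of $\text{id}_V$ through the generator $\pi_2$; hence $\text{id}_V\in i_V^\ast R$, and any sieve containing the identity is all of $(\textbf{Top}\downarrow V)$. The maximal sieve is a cover by the maximality axiom verified in Theorem \ref{uhcs is a top}, and pulling it back along $h$ returns the maximal sieve on $Z$ by the stability computation $h^\ast(\textbf{Top}\downarrow V)=(\textbf{Top}\downarrow Z)$. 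Thus $g^\ast R$ is a universal hocolim sieve for every $g\in S(\EuScript{U})$, and transitivity yields $R\in\EuScript{U}(X)$ directly (universality is built into membership in the topology, so no separate argument is needed).

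The hard part is not any homotopy-colimit estimate at all but rather the translation of ``locally split'' into the existence of sections of the $V\times_X Y\to V$; once that is fixed, the argument is purely formal. The one point needing genuine care is recognizing that a section forces the pulled-back sieve to be \emph{maximal} rather than merely to contain a split epimorphism, which is precisely what lets the proof sidestep computing $\hocolim{}{\check C}(f)_\ast$ via Proposition \ref{reduction for monogenic}.
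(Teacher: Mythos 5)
Your proof is correct and is essentially the paper's argument: both hinge on the observation that the local sections make each inclusion $V\hookrightarrow X$ factor through $f$, so that $S(\EuScript{U})\subset\langle\{f\}\rangle$. The paper packages the conclusion as ``any sieve containing the covering sieve $S(\EuScript{U})$ is itself a cover,'' while you unwind that same fact into a direct application of the transitivity axiom by checking that each pullback $g^\ast R$ is the maximal sieve; the difference is purely presentational.
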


\begin{proof}
Suppose $f$ is a locally split map, i.e. $f\colon Y\to X$ and there is an open cover $\EuScript{U}$ of $X$ such that for all $V\in\EuScript{U}$, $f\big|_{f^{-1}(V)}\colon f^{-1}(V) \to V$ is split.
Let $s_V\colon V\to f^{-1}(V)$ be the splitting map for $f\big|_{f^{-1}(V)}$.
Then the composition $V\xrightarrow{s_V} f^{-1}(V)\subset Y\xrightarrow{f} X$ equals the inclusion map $V\subset X$ and is in $\langle\{f\}\rangle$.
Indeed, $f\circ s_V = id_V$ and the composition clearly factors through $f$.
Thus $(V\subset X)\in \langle\{f\}\rangle$ for all $V\in \EuScript{U}$, which implies that $S(\EuScript{U})\subset \langle\{f\}\rangle$.
Since $S(\EuScript{U})$ is in the homotopical canonical topology (by Proposition \ref{open covers are uhcs}), then the Grothendieck topology transitivity axiom implies that any sieve containing it is also in the homotopical canonical topology.
Therefore, $\langle\{f\}\rangle$ is in the homotopical canonical topology.
\end{proof}

\bibliographystyle{plain}
\bibliography{CanonicalTopology}

\begin{thebibliography}{10}

\bibitem{GrothendieckConstruction}
Wojciech Chach{\'o}lski and J{\'e}r{\^o}me Scherer.
\newblock {\em Homotopy theory of diagrams}.
\newblock Number 736. American Mathematical Soc., 2002.

\bibitem{primer}
Daniel Dugger.
\newblock A primer on homotopy colimits.
\newblock {\em preprint}, 2008.

\bibitem{dugger2001hypercovers}
Daniel Dugger and Daniel~C Isaksen.
\newblock Hypercovers in topology.
\newblock {\em arXiv preprint math/0111287}, 2001.

\bibitem{goerss2009simplicial}
Paul~G Goerss and John~F Jardine.
\newblock {\em Simplicial homotopy theory}.
\newblock Springer Science \& Business Media, 2009.

\bibitem{johnstone2002sketches}
Peter~T Johnstone.
\newblock {\em Sketches of an elephant: A topos theory compendium}, volume~2.
\newblock Oxford University Press, 2002.

\bibitem{kashiwara2006categories}
Masaki Kashiwara and Pierre Schapira.
\newblock Categories and sheaves, volume 332 of.
\newblock {\em Grundlehren der Mathematischen Wissenschaften [Fundamental
  Principles of Mathematical Sciences]}, 2006.

\bibitem{kellymono}
GM~Kelly.
\newblock Monomorphisms, epimorphisms, and pull-backs.
\newblock {\em Journal of the Australian Mathematical Society},
  9(1-2):124--142, 1969.

\bibitem{cwm}
Saunders Mac~Lane.
\newblock {\em Categories for the working mathematician}, volume~5.
\newblock Springer Science \& Business Media, 2013.

\bibitem{maclane}
Saunders Mac~Lane and Ieke Moerdijk.
\newblock {\em Sheaves in geometry and logic: A first introduction to topos
  theory}.
\newblock Springer Science \& Business Media, 2012.

\bibitem{quillen1973higher}
Daniel Quillen.
\newblock Higher algebraic k-theory: I.
\newblock In {\em Higher K-theories}, pages 85--147. Springer, 1973.

\end{thebibliography}
\end{document}